%% file: main.tex
\documentclass[11pt, reqno]{article}
\title{Random homomorphisms and Lipschitz functions on trees}
\author{Alon Heller\thanks{Tel Aviv University. Research supported in part by ISF grant 1361/22.} \and Yinon Spinka\footnotemark[1]}
\date{}

\usepackage{amsmath,amsfonts,amssymb,amsthm}
\usepackage{graphicx,enumerate,mathtools,thmtools}
\usepackage[sorting=nyt, backend=biber, style=numeric, maxbibnames=99]{biblatex}
\usepackage[shortlabels]{enumitem}
\usepackage{xcolor}
\usepackage{forest}
\usepackage{pgfplots}
\pgfplotsset{compat=1.18} 
\usepackage[font=small, labelfont=bf, labelsep=period, margin=0.7cm]{caption}
\usepackage[margin=2.5cm]{geometry}
\usepackage{tocloft}
\usepackage[colorlinks=true, linkcolor=blue, citecolor=blue, urlcolor=blue]{hyperref}

\usepackage{cleveref}
\crefname{theorem}{Theorem}{Theorems}
\Crefname{theorem}{Theorem}{Theorems}
\crefname{lemma}{Lemma}{Lemmas}
\Crefname{lemma}{Lemma}{Lemmas}
\crefname{prop}{Proposition}{Propositions}
\Crefname{prop}{Proposition}{Propositions}
\crefname{claim}{Claim}{Claims}
\Crefname{claim}{Claim}{Claims}
\crefname{cor}{Corollary}{Corollaries}
\Crefname{cor}{Corollary}{Corollaries}
\crefname{question}{Question}{Questions}
\Crefname{question}{Question}{Questions}
\crefname{remark}{Remark}{Remarks}
\Crefname{remark}{Remark}{Remarks}
\Crefname{ssec}{Subsection}{Subsections}

\newtheorem{theorem}{Theorem}[section]
\newtheorem{lemma}[theorem]{Lemma}
\newtheorem{proposition}[theorem]{Proposition}
\newtheorem{claim}[theorem]{Claim}
\newtheorem{corollary}[theorem]{Corollary}

\theoremstyle{definition}
\newtheorem{definition}[theorem]{Definition}
\theoremstyle{remark}
\newtheorem{remark}[theorem]{Remark}
\numberwithin{equation}{section}

\newcommand{\Res}{{\mathcal{R}}}
\newcommand{\RR}{{\mathbb{R}}}
\newcommand{\ZZ}{{\mathbb{Z}}}

\newcommand{\RRnonn}{\left[0,\infty\right)}

\newcommand{\pr}[1]{\left(#1\right)}
\newcommand{\br}[1]{\left[#1\right]}
\newcommand{\cbr}[1]{\left\{#1\right\}}
\newcommand{\abs}[1]{\left|#1\right|}
\newcommand{\floor}[1]{\left\lfloor#1\right\rfloor}

\DeclareMathOperator{\var}{Var}
\newcommand{\prob}[1]{\mathbb{P}\pr{#1}}
\newcommand{\expected}[1]{\mathbb{E}\br{#1}}
\newcommand{\1}{\mathbf{1}}

\begin{document}
\maketitle

\input{abstract}
\tableofcontents

\input{intro}
\input{localized}
\input{delocalized}

\input{lipschitz}
\input{counterexample}

\printbibliography
\end{document}

%% file: abstract.tex
\begin{abstract}
A graph homomorphism is an integer-valued function on the vertex set of a graph that assigns values differing by exactly one to adjacent vertices.
We consider uniformly random homomorphisms on general finite trees, conditioned to take the value zero at all leaves, and study the distribution of the value at the root.
Our main result is a stochastic comparison, both from above and below, between the absolute values of the homomorphism value at the root and certain discrete Gaussian-like random variables.
In particular, we obtain a subgaussian tail bound valid for all deviations, a matching lower bound that holds up to a certain threshold, and upper and lower variance bounds that differ by a constant factor. These bounds depend solely on the effective resistance between the root and the leaves in the associated electrical network.
As a consequence, in the setting of infinite locally finite trees, we obtain that the homomorphism model is localized on transient trees and delocalized on recurrent trees.
Analogous results are obtained for random integer-valued Lipschitz functions.

Our results extend previous results of Benjamini--H{\"a}ggström--Mossel on homomorphisms on regular trees, of Peled--Samotij--Yehudayoff on Lipschitz functions on regular trees, and of Lammers--Toninelli on homomorphisms on trees of minimum degree at least three.

\end{abstract}

%% file: intro.tex
\section{Introduction}

Random height functions are a common framework for probability measures on integer-valued or real-valued functions that assign a height to every vertex of a finite or locally finite graph $G$. One notable family of height functions $\phi:V(G)\to\ZZ$ is graph homomorphisms, given by the condition
\begin{equation*}
\abs{\phi(u)-\phi(v)}=1\quad\forall (u,v)\in E(G).
\end{equation*}
If the graph is bipartite, finite and connected, then fixing the height at one or more vertices leaves finitely many valid functions from which one may be drawn uniformly at random.
Relaxing the constraint to $\abs{\phi(u)-\phi(v)}\leq M$ for adjacent vertices produces the class of integer-valued $M$-Lipschitz functions. Allowing real heights yields an analogous continuous Lipschitz variant.

Random homomorphisms and Lipschitz functions fit into a more general archetype of height function distributions known as gradient Gibbs measures. The defining characteristic of these measures is their dependence on a so-called potential function, which assigns an energy cost to height differences between neighboring vertices. In this paper, we focus mainly on the integer-valued variant. Given a graph $G$ and a potential function $U:\ZZ\to\RR\cup\cbr{\infty}$, the associated distribution on integer-valued height functions $\phi:V(G)\to\ZZ$ is given by
\begin{equation*}
\prob{\phi}\propto \exp\pr{-\sum_{(u,v)\in E(G)} U\pr{\phi(u)-\phi(v)}}
\end{equation*}
which is properly defined when $G$ is finite and the heights are fixed for a subset of vertices in a suitable way, a choice known as a boundary condition.
As mentioned, random homomorphisms and Lipschitz functions may be seen as instances of this framework. In the homomorphism model, the potential is given by $U(x)=0$ for $x=\pm1$ and $U(x)=\infty$ otherwise, enforcing the constraint that neighboring heights differ by exactly one. In the $M$-Lipschitz model, the potential is defined by $U(x)=0$ for $\abs{x}\leq M$ and $U(x)=\infty$ otherwise. Both of these are examples of hard-constrained models, where some height differentials are strictly forbidden.

An important and well-known soft-constrained example is the integer-valued Gaussian free field (GFF), corresponding to the quadratic potential $U(x)=\beta x^2$. This model penalizes large height differentials rather than forbidding them. Another canonical example of the soft-constrained family of models is the so-called solid-on-solid (SOS) model, whose potential is given by $U(x)=\beta \abs{x}$.

The framework for gradient Gibbs measures extends naturally to real-valued height functions. Canonical examples include the aforementioned real-valued $M$-Lipschitz model, and the (real-valued) Gaussian free field for which the potential is once again given by $U(x)=\beta x^2$.

In the setting of locally finite graphs, a common approach is to set boundary conditions at increasing distances from a fixed vertex and examine the distribution of its height, typically fixing the height at the boundary to be zero. This approach is central to the study of localization versus delocalization:
Localization occurs when this family of height distributions is tight as the boundary varies, while delocalization occurs when the probability of the height belonging to any bounded set vanishes as the boundary recedes to infinity.
Informally, the typical height remains bounded in localized models, whereas in delocalized models the height fluctuations grow to infinity.

In this paper, we study integer-valued height functions on trees under zero boundary conditions.
Our results provide two-sided bounds on the distribution of the height at the root, both for random graph homomorphisms and for random integer-valued $M$-Lipschitz functions. In each case, these bounds are expressed in terms of the effective resistance between the root and the boundary in the associated electrical network (see, e.g., \cite[Chapter 2]{LP17} for relevant background). For locally finite trees, we deduce that both models exhibit localization or delocalization depending solely on whether the underlying tree is transient or recurrent (for simple random walk), respectively.

Our first main result is a complete dichotomy determining localization or delocalization of random homomorphisms on any locally finite tree.

\begin{theorem} \label{thm:equivalence_hom}
Let $T$ be a locally finite tree. Then homomorphisms on $T$ are localized if $T$ is transient, and delocalized otherwise.
\end{theorem}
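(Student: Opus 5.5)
The plan is to deduce \cref{thm:equivalence_hom} from the quantitative two-sided comparison described in the abstract. That comparison bounds the root height $\phi(\rho)$ of a uniform homomorphism on a finite tree with zero boundary at the leaves, above and below, by discrete Gaussian-like variables whose scale is the effective resistance $R$ between the root and the leaves. We exhaust the locally finite tree $T$ by the finite trees $T_n$, namely the ball of radius $n$ around the root $\rho$ (taking $n$ of fixed parity so that zero boundary is admissible). We regard the vertices at distance exactly $n$ as the leaves on which $\phi=0$. Let $R_n:=\Res(\rho\leftrightarrow\partial T_n)$.

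Standard electrical network theory (e.g.\ \cite[Chapter 2]{LP17}) gives two facts. First, $R_n$ is nondecreasing in $n$. Second, $R_n\to\Res(\rho\leftrightarrow\infty)$, which is finite if and only if $T$ is transient. If $T$ is transient, then $\sup_n R_n<\infty$. The subgaussian upper bound, of the form $\prob{\abs{\phi(\rho)}\ge t}\le C\exp(-ct^2/R_n)$ for all $t$, together with the variance bound $\var(\phi(\rho))\le CR_n$, then gives tightness of the laws of $\phi(\rho)$ uniformly in $n$. That is localization.

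If $T$ is recurrent, then $R_n\to\infty$, and I would use the lower bound. The abstract promises a variance lower bound $\var(\phi(\rho))\ge cR_n$ and a matching lower tail bound up to a threshold. What we need is that for every fixed $k$, $\prob{\abs{\phi(\rho)}\le k}\to0$. A variance lower bound alone does not give this, so I would use the stochastic domination from below. The comparison states that $\abs{\phi(\rho)}$ stochastically dominates a discrete Gaussian-like variable $Z_n$ whose width grows with $R_n$; if necessary this holds after a fixed shift or modulo the parity restriction. For such $Z_n$ we have $\prob{\abs{Z_n}\le k}\le C(k+1)/\sqrt{R_n}\to0$, and this yields delocalization.

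The main obstacle is to check that the lower comparison is genuinely an anti-concentration statement near zero, and not merely a tail bound valid above some threshold. If the paper's lower bound only controls $\prob{\abs{\phi(\rho)}\ge t}$ for $t$ up to roughly $\sqrt{R_n}$, I would instead apply it with $t=\epsilon\sqrt{R_n}$. This gives $\prob{\abs{\phi(\rho)}\ge \epsilon\sqrt{R_n}}\ge 1-\delta(\epsilon)$ with $\delta(\epsilon)\to0$ as $\epsilon\to0$. Since $\epsilon\sqrt{R_n}$ eventually exceeds any fixed $k$, delocalization follows.

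A minor additional point is the choice of boundary. Delocalization should hold for every sequence of finite boundaries receding to infinity, not just balls. For this I would note that the effective resistance to any finite cutset far from the root also tends to infinity in the recurrent case, and stays bounded in the transient case, so the same argument applies verbatim.
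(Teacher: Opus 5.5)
Your deduction is correct, and it is essentially the paper's own proof of this theorem. \Cref{thm:hom_localization} and \Cref{thm:hom_deloc_on_rec_tree} are obtained exactly by inserting $\mathcal{R}\le\mathcal{R}(v^*\leftrightarrow\infty)<\infty$ (transient case) or $\mathcal{R}\to\infty$ (recurrent case) into the bounds of \Cref{thm:main_result_hom}. Keep in mind that all the substance lies in that black box. The paper proves it by showing that $f_{T,\mathrm{Hom}}$ is $e^{1/\mathcal{R}_T}$-strongly log-concave and $(\mathcal{R}_T,8)$-limited log-concave. Both properties propagate through the recursion $f_{T,\mathrm{Hom}}\propto\prod_{v}(f_{T[v],\mathrm{Hom}}*\1_{\cbr{-1,1}})$ in the same way that resistance propagates through $1/\mathcal{R}_T=\sum_v 1/(\mathcal{R}_{T[v]}+1)$.

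There are some smaller points of comparison.

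For delocalization, the paper uses the pointwise bound $\prob{\phi(v^*)=2n}\le e^8/\sqrt{\mathcal{R}_T}$ rather than the lower stochastic domination. Your route through $\abs{\phi(v^*)}\ge_{st}\abs{\mathcal{N}_{2\ZZ\cap(-R/2,R/2)}(0,R/4)}$ works equally well. The obstacle you anticipate does not arise, since that domination is genuinely anti-concentration at $0$.

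What you leave open is the other parity. \Cref{thm:main_result_hom} is stated for even trees, but localization and delocalization must also cover boundaries at odd distance. The paper handles this by adding a new root $v^{**}$ whose only child is $v^*$. The tree becomes even and the resistance grows by exactly $1$. By \Cref{lem:hom_law_flow}, $\phi(v^{**})=\phi(v^*)+\xi$ with $\xi$ an independent fair sign. Hence the variance grows by exactly $1$, and $\abs{\phi(v^*)}\ge_{st}\abs{\phi(v^{**})}-1$.

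Finally, since you impose zero only on the sphere, you should say explicitly that finite dead-end branches inside the ball may be pruned. This is harmless because each free vertex has exactly two choices given its parent, and such a branch carries no current. After pruning, \Cref{thm:main_result_hom} applies with $\mathcal{R}=\mathcal{R}(\rho\leftrightarrow\partial T_n)$. Your cutset formulation is in fact the right one. The inequality $\mathcal{R}_{T'}\le\mathcal{R}_T$ used in the paper holds when the leaves of $T'$ separate $v^*$ from infinity, but it can fail for other finite subtrees (e.g.\ a long path inside a binary tree).
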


We further obtain a refinement of \Cref{thm:equivalence_hom}, giving matching upper and lower distribution bounds on the height at the root under zero boundary conditions. Due to the parity conditions imposed by homomorphisms, we introduce the notion of \textbf{even rooted trees}: Trees in which every leaf is at an even distance from the root. The following result is given for even trees, but a similar result easily follows for \emph{odd} rooted trees as well.

In the following, we denote by
$\mathcal{N}_I \pr{0,\sigma^2}$ a discrete Gaussian variable restricted to the subset $\emptyset \neq I \subseteq \ZZ$, taking value $x \in I$ with probability proportional to $e^{-x^2/2\sigma^2}$.

\begin{theorem} \label{thm:main_result_hom}
Let $(T,v^*)$ be an even finite rooted tree, and let $\phi:V(T)\to\ZZ$ be a uniformly random homomorphism satisfying $\phi\equiv0$ on the leaves of $T$. Let $R$ be the effective resistance between the root $v^*$ and the leaves. Then the distribution of the height at the root, $\phi\pr{v^*}$, satisfies
\begin{equation*}
\abs{\mathcal{N}_{2\ZZ\cap(-R/2,R/2)}\pr{0,R/4}} \,\le_{st}\, \abs{\phi\pr{v^*}} \,\le_{st}\, \abs{\mathcal{N}_{2\ZZ} \pr{0,4R}} .
\end{equation*}
In particular, its variance is bounded by
\begin{equation*}
\floor{\frac{R}{4e^4\sqrt{\pi}}} \leq\var\pr{\phi\pr{v^*}}\leq 4R .
\end{equation*}
\end{theorem}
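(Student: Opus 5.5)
We will work with the partition-function recursion on the tree. For a vertex $v$, let $Z_v(x)$ be the number of homomorphisms on the subtree $T_v$ taking value $x$ at $v$ and $0$ at the leaves of $T_v$. Then
\[
Z_v(x)=\prod_{w\text{ child of }v}\bigl(Z_w(x-1)+Z_w(x+1)\bigr),
\]
and the law of $\phi(v^*)$ is proportional to $Z_{v^*}$. The plan is to prove by induction from the leaves upward a two-sided ``Gaussian-like'' control of each $Z_v$, with variance parameter tied to the effective resistance $R_v$ between $v$ and the leaves of $T_v$. Resistances combine in series along an edge ($R\mapsto R+1$) and in parallel across children ($1/R_v=\sum_w 1/(R_w+1)$). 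The natural invariants are log-concavity type statements comparing $Z_v$ with $x\mapsto e^{-x^2/(2\sigma_v^2)}$ restricted to the correct parity class. Stochastic domination of $|\phi(v^*)|$ should follow from monotone-likelihood-ratio statements. Concretely, the claim is that the ratio $Z_v(x)/e^{-x^2/(2\cdot 4R_v)}$ is nonincreasing in $|x|$ on the parity class, which gives the upper bound. It is equally monotone nondecreasing against $e^{-x^2/(2\cdot R_v/4)}$ on $|x|<R_v/2$, which gives the lower bound.

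The inductive step has two parts.

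\emph{Edge step.} Passing from $Z_w$ to $Y_w(x)=Z_w(x-1)+Z_w(x+1)$ is a convolution with the two-point kernel. We need to show that it increases the ``effective variance'' by at most a constant factor times $1$ for the upper bound, and by at least a comparable amount for the lower bound. For the upper bound we use that a sum of two shifted Gaussian-dominated log-concave sequences stays dominated by a Gaussian with variance increased by $O(1)$. This is where the constant $4$ is spent: we will actually track $\log Z$ as having discrete second differences at most $-1/(4R)$ and at least $-4/R$-ish.

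\emph{Vertex step.} The product over children adds the log-curvatures, and $\sum_w 1/(R_w+1)=1/R_v$ exactly matches the parallel law. So the vertex step is exact given the edge step, provided the invariant is phrased in terms of the curvature $1/\sigma^2$ rather than $\sigma^2$.

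Leaves give $Z=\delta_0$, corresponding to $R=0$ and infinite curvature, which serves as the base case.

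The main obstacle is the edge step. It requires showing that the discrete two-point convolution transforms a curvature bound $\kappa$ into roughly $\kappa/(1+\kappa)$, up to constants. For the upper bound (log-concavity preserved, curvature not dropping too fast) I would first try the Prékopa--Leindler/Brascamp--Lieb style fact that convolution of log-concave sequences is log-concave. I would then estimate the second difference of $\log(Z(x-1)+Z(x+1))$ directly using the ratio $r(x)=Z(x+1)/Z(x-1)$ and its monotonicity. For the lower bound, log-convexity-type control fails globally, because the convolution of a very concentrated sequence is not Gaussian far out. This is why the comparison is only claimed on $|x|<R/2$. There I would show that $r$ decreases at most at rate $\approx 4/R$ while $|x|\lesssim R$, by an induction keeping $r(x)\ge e^{-c|x|/R}$ on that window.

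Finally, the variance bounds follow by computing: $\var\le \mathbb{E}|\mathcal N_{2\ZZ}(0,4R)|^2\le 4R$ via a standard comparison of the discrete Gaussian second moment with the continuous one. For the lower bound, we bound below the second moment of the truncated discrete Gaussian on $2\ZZ\cap(-R/2,R/2)$, which gives a constant multiple of $R$, with the floor accounting for small $R$.
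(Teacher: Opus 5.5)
Your upper-bound half is the paper's argument. The invariant is $f(n)^2\ge e^{1/R}f(n-2)f(n+2)$ on the parity class, and the product over children adds log-curvatures exactly as the parallel law does. The edge step is the worst-case computation in $t=f(n+1)/f(n-1)$; the worst case is $t=1$, which gives $\alpha\mapsto 4(\alpha^{-1}+1)^{-2}\ge e^{1/(R+1)}$ for $\alpha=e^{1/R}$. To get the constant $4R$ you need exactly this, with $1/\log\alpha$ growing by at most $1$ per edge; a multiplicative loss ``up to constants'' would compound over the depth.

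The genuine gap is in the lower bound. Your invariant is ``$f(n)^2\le e^{c/R}f(n-2)f(n+2)$ for $|n|<aR$,'' or its slope form $r(x)\ge e^{-c|x|/R}$ there. This invariant is not preserved by the edge step. Consider a new point $n<a(R+1)$ whose neighbour $n+1$ lies outside the old window: nothing controls $f(n+3)$ from below.

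Concretely, take $f\propto e^{-cm^2/(8R)}$, which has log-curvature exactly $-c/R$, and truncate it symmetrically just after $\pm(n+1)$. It is symmetric, log-concave and satisfies your invariant. Yet the required inequality
\[
(f(n-1)+f(n+1))^2\le e^{c/(R+1)}(f(n-3)+f(n-1))(f(n+1)+f(n+3))
\]
reduces to $(1+t)^2\le e^{c/(R+1)}(e^{-c/R}+t)$. This fails for every $t>0$ once $e^{c/(R+1)}\le 2$.

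The slope version breaks the same way. One has $r'(x)=\frac{(1+r(x+1))\,r(x-1)}{1+r(x-1)}$, and with $r(x+1)$ uncontrolled you only get $r'(x)\ge r(x-1)/(1+r(x-1))$. That is a constant-factor loss, while the target $e^{-cx/(R+1)}$ lies only a factor $1-O(1/R)$ below $e^{-c(x-1)/R}$.

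The missing idea is to let the permitted curvature deteriorate toward the edge of the window, with a profile compatible with both combination rules. The paper uses $(R,\gamma)$-limited log-concavity: $f(n)^2\le \Phi^\gamma_R(n)f(n-2)f(n+2)$ for $|n|<R$, with $\Phi^\gamma_R(x)=e^{\gamma/(R-|x|)}$ and $\gamma=8$. This profile works for three reasons:
\begin{itemize}
\item Since $\Phi^\gamma_R(n-1)=\Phi^\gamma_{R+1}(n)$ for $n\ge 1$, the bulk of the edge step reduces to the elementary inequality $(1+1/\Phi^\gamma_R(n-1))(1+1/\Phi^\gamma_R(n+1))\ge 4/\Phi^\gamma_{R+1}(n)$.
\item Near the window edge, $\Phi^\gamma_{R+1}(n)\ge e^{\gamma/2}>3$. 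So one may discard $f(n+3)$ altogether and finish with unimodality and symmetry, which is exactly the situation your invariant cannot handle.
\item The inequality $\Phi^\gamma_{R_1}\Phi^\gamma_{R_2}\le\Phi^\gamma_R$ when $1/R=1/R_1+1/R_2$ gives the vertex step.
\end{itemize}
On $|x|<R/2$ this profile yields log-curvature at least $-2\gamma/R=-16/R$. That is precisely the comparison with the truncated $\mathcal{N}_{2\ZZ\cap(-R/2,R/2)}(0,R/4)$ in the statement. Without some position-dependent invariant of this kind, your lower-bound induction does not close.
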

Additional information about the distribution of $\phi\pr{v^*}$ is obtained in \Cref{sec:hom-localization,sec:hom-delocalization}. We also mention here that our results apply under more general boundary conditions, see \Cref{remark:hom_loc_different_boundary_conds,remark:hom_deloc_different_boundary_conds}.

\smallskip

We obtain analogous results for integer-valued Lipschitz functions.

\begin{theorem} \label{thm:equivalence_lip}
Let $T$ be a locally finite tree and let $M$ be a positive integer. Then integer-valued $M$-Lipschitz functions on $T$ are localized if $T$ is transient, and delocalized otherwise.
\end{theorem}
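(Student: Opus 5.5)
Theorem~\ref{thm:equivalence_lip} will follow from a Lipschitz analogue of \Cref{thm:main_result_hom}: two-sided stochastic bounds on $\abs{\phi(v^*)}$ for a uniformly random integer-valued $M$-Lipschitz function on a finite rooted tree with $\phi\equiv0$ on the leaves. These bounds should depend only on $M$ and the effective resistance $R$ between $v^*$ and the leaves. Concretely, I would aim for
\begin{equation*}
\abs{\mathcal{N}_{\ZZ\cap(-cR,cR)}\pr{0,c'M^2R}} \,\le_{st}\, \abs{\phi\pr{v^*}} \,\le_{st}\, \abs{\mathcal{N}_{\ZZ}\pr{0,CM^2R}},
\end{equation*}
with constants depending only on $M$. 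Given such bounds, the dichotomy is formal. Fix a vertex $v$ of the locally finite tree $T$ and let $T_n$ be the ball of radius $n$ around $v$, with boundary condition zero on $\partial T_n$. The effective resistances $R_n$ between $v$ and $\partial T_n$ increase to $R_\infty$, which is finite exactly when $T$ is transient. If $T$ is transient, the upper bound gives subgaussian tails uniform in $n$, so the laws are tight and the model is localized. If $T$ is recurrent, then $R_n\to\infty$. The lower bound then shows that for every fixed $k$ the probability $\prob{\abs{\phi(v)}\le k}$ is at most the mass of $\{-k,\dots,k\}$ under a discrete Gaussian of variance of order $R_n$ spread over a window of size of order $R_n$. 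This mass is $O(k/\sqrt{R_n})\to0$, so the model is delocalized.

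For the finite bounds I would copy the recursive structure used in \Cref{sec:hom-localization,sec:hom-delocalization}. Let $Z_u(x)$ be the number of valid extensions on the subtree of $u$ given $\phi(u)=x$. On a tree these satisfy
\begin{equation*}
Z_u(x)=\prod_{w\text{ child of }u}\ \sum_{\abs{y-x}\le M} Z_w(y),
\end{equation*}
and the law of $\phi(v^*)$ is proportional to $Z_{v^*}$. Series and parallel composition of edges correspond respectively to convolution with the uniform kernel on $\{-M,\dots,M\}$ and to pointwise products. This mirrors how resistances compose: adding in series, and combining harmonically in parallel.

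The induction hypothesis is two-sided: $Z_u(x)/Z_u(0)$ is sandwiched between Gaussian-type profiles $e^{-x^2/(2a\rho_u)}$ and $e^{-x^2/(2b\rho_u)}$, where $\rho_u$ is the resistance from $u$ to the leaves below it. The lower profile is only needed on a window $\abs{x}\lesssim\rho_u$. The induction has two steps.
\begin{enumerate}[(i)]
\item \emph{Parallel step.} Multiplying Gaussian profiles adds inverse variances. This matches the parallel law for resistances exactly, since $1/\rho$ adds.
\item \emph{Series step.} Convolving with the uniform step kernel, whose variance is $M(M+1)/3$, increases the variance by an amount comparable to the unit resistance of the added edge.
\end{enumerate}

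The main obstacle is the series step for the upper bound. Convolving a subgaussian profile with a bounded kernel preserves a subgaussian bound, but the constant degrades unless the profile is log-concave. So I would first prove that each $Z_u$ is log-concave and symmetric. Log-concavity is preserved under products and under convolution with the log-concave uniform kernel, and this parallels the homomorphism argument. With log-concavity in hand, I would compare the one-step convolution with a discrete Gaussian via a direct ratio estimate $Z(x+1)/Z(x)$.

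The lower bound's window restriction $(-cR,cR)$ reflects the Lipschitz constraint: on short trees the height cannot exceed $M$ times the depth, which forces the truncation. I would handle it by applying the same ratio estimates only within the window.
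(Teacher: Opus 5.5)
Your outer architecture agrees with the paper. That includes the recursion $Z_u=\prod_w (Z_w*\1_{\{-M,\dots,M\}})$, with products matching parallel composition and convolutions matching series edges, and the preservation of symmetric log-concavity. It also includes the formal deduction of the dichotomy from resistance-dependent two-sided bounds: tightness when $\mathcal{R}_T<\infty$, and $\mathbb{P}(\abs{\phi(v)}\le k)=O(k/\sqrt{R_n})$ when $R_n\to\infty$.

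The gap is in the finite-tree induction. Your upper hypothesis $Z_u(x)/Z_u(0)\le e^{-x^2/(2b\rho_u)}$ is false at the first level. For every vertex $u$ all of whose children are leaves, $Z_u=\1_{\{-M,\dots,M\}}$, so $Z_u(1)/Z_u(0)=1$ for every finite $b$. Your target bound $\abs{\phi(v^*)}\le_{st}\abs{\mathcal{N}_\ZZ(0,CM^2R)}$ is false as well. For a star with $k$ leaves, $R=1/k$, yet $\phi(v^*)$ is uniform on $\{-M,\dots,M\}$ for every $k$, while $\mathcal{N}_\ZZ(0,CM^2/k)$ concentrates at $0$.

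This is not just a missing additive constant. Convolution with the flat kernel creates plateaus of length about $2M+1$ on which $Z(x+1)/Z(x)$ is (nearly) $1$. So any condition bounding individual ratios or individual log-curvatures away from $1$ is destroyed by a series step; the point mass already loses it after one convolution. ``Log-concavity plus a direct ratio estimate'' does not say what survives. If you shift the profile instead, you must show that the shift does not grow. The naive estimate ($g(x)\le(2M+1)\max_{\abs{y-x}\le M}f(y)$, $g(0)\ge f(0)$) loses a factor $2M+1$ and a shift $M$ per edge. That loss accumulates with depth rather than resistance, which ruins localization on deep transient trees.

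The missing idea is a hypothesis that the flat convolution preserves exactly. The paper uses $(\alpha,m)$-strong log-concavity with $m=2M+1$, i.e.\ weak log-concavity plus $f(n+1)f(n+m)\ge\alpha f(n)f(n+m+1)$; this bounds log-curvature only on average over windows of length $m$. It holds for $\1_{\{-M,\dots,M\}}$ and multiplies under products. Under convolution with a width-$m$ indicator it becomes $(\beta,m)$-strong log-concavity with $\beta=m/(m+\alpha^{-m}-1)$. With $\alpha_u=e^{1/((2M+1)\mathcal{R}_{T[u]})}$ it closes against the resistance recursion, giving the shifted bound $M+1+\abs{\mathcal{N}_\ZZ(0,O(M^2R))}$.

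The lower-bound half is also only heuristic. From a pointwise lower profile $f(y)\ge f(0)e^{-y^2/(2a\rho)}$ and unimodality, the best available estimate is $g(x)/g(0)\ge\frac{1}{2M+1}\sum_{\abs{k}\le M}e^{-(x+k)^2/(2a\rho)}$. This falls below the required $e^{-x^2/(2a(\rho+1))}$ for nonzero $\abs{x}$ up to order $\sqrt{\rho}$, so pointwise lower profiles do not propagate either. Moreover, near the edge of the window the convolution uses values of $f$ where you have no lower bound, and ``apply the same ratio estimates only within the window'' does not deal with this.

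The paper handles both issues with limited log-concavity. This is the local condition $f(n)^2\le e^{\gamma/(R-\abs{2n})}f(n-1)f(n+1)$ for $\abs{n}<R/2$, which degenerates at the window edge. Closing it under convolution with $\1_{\{-M,\dots,M\}}$ takes $\gamma\ge 32M+16$. It also takes a comparison of $g(n)^2$ with $g(n-1)g(n+1)$ along anti-diagonals $i+j=S$, with separate treatment of the extreme anti-diagonals and of $n$ near the window edge. The resulting lower variance is of order $R/M$ rather than your $M^2R$, which is all delocalization needs.
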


We also provide an analogue of \Cref{thm:main_result_hom} in the Lipschitz setting, see \Cref{cor:upper_bounds_on_lip_root_laws,cor:lower_bounds_on_lip_root_laws}.

\subsection{Background and discussion}
The study of gradient Gibbs measures has received significant attention in recent years. The real-valued Gaussian free field (GFF) is well-understood; it is closely linked to electrical resistance and random walks, see for example \cite[Theorem 2.3 and Proposition 2.24]{LP17}. In particular, it is known to localize on transient graphs and delocalize on recurrent graphs. The integer-valued Gaussian free field has a more complex behavior, dependent on its so-called inverse temperature. However, because its variance is strictly bounded by that of the real-valued GFF via Gaussian domination \cite{FSS76, FS81, She05}, the integer-valued GFF is similarly known to localize on any transient graph regardless of temperature.

A recent result by Sellke \cite{Sel24} regarding continuous models extends localization on transient graphs to a wide variety of soft-constraint potential functions, including the solid-on-solid (SOS) model. The boundary behavior of these gradient models on trees has also been a subject of recent focus; for instance, Henning and K{\"u}lske \cite{HK24} recently detailed the boundary pinning mechanics for general nearest-neighbor integer-valued gradient models on regular trees.

Comparatively less is known in the hard-constraint setting, although recent work has begun to explore the behavior of hard-core constraints and SOS models with countable spin values specifically on Cayley trees \cite{RH24}. Random homomorphisms and integer-valued Lipschitz functions have been shown to localize on some specific graph classes, including expanders \cite{PSY13exp,KLP24,DJ26}, hypercubes \cite{Kah01,Gal03} and high dimensional lattices \cite{Pel17, PS20}.

On regular trees, localization was established first for random homomorphisms by Benjamini, H{\"a}ggström and Mossel~\cite{BHM00}, and then for (both discrete and continuous) random Lipschitz functions by Peled, Samotij and Yehudayoff~\cite{PSY13lip}. In both instances, the authors utilized a recursive approach to obtain doubly exponential tail bounds on the root height distribution. This line of inquiry remains highly active; recently, Butler, Krishnan, Ray and Spinka~\cite{BKRS24} provided an alternative proof of localization for 1-Lipschitz functions on regular trees, and established a phase transition in the weak local convergence of these measures based on the degree of the tree.

Recently, Lammers and Toninelli \cite{LT24} extended the localization of homomorphisms to any tree in which every vertex has at least two children. They proved by induction that the root height distribution in such trees is $\alpha$-strongly log-concave for a universal constant $\alpha>1$ (see \Cref{sec:proof-outline} below for an explanation of strong log-concavity and other notions discussed here). In this paper, we build upon this argument and extend it to general transient trees, by establishing strong log-concavity with a non-constant parameter $\alpha$ depending on the tree and vertex. We also construct similar arguments to prove delocalization on recurrent trees, as well as extend the results to Lipschitz functions. To tailor the argument accordingly, we introduce limited log-concavity, as well as another variant of strong log-concavity. Notably, we additionally establish localization for continuous real-valued Lipschitz functions on transient trees via a limiting argument, though the behavior of the continuous model on recurrent trees remains an open question.

A natural question that arises from these results is whether the transience-recurrence dichotomy holds for general graphs beyond trees. In \Cref{sec:counterexample} we show that this is not the case. Namely, we present a transient locally finite graph on which random homomorphisms are delocalized, as well as a recurrent one on which homomorphisms are localized. These counterexamples also rule out such a dichotomy for random Lipschitz functions.

\subsection{Proof outline}\label{sec:proof-outline}
We begin with some notation.
Let $\pr{T,v^*}$ be a finite or locally finite rooted tree. Given a vertex $v\in T$, we denote the set of children of $v$ by $N(v)\subseteq V(T)$. We denote by $T[v]$ the induced tree on the set of descendants of $v$, rooted in $v$. We also use basic notions from the theory of electrical networks on graphs (see, e.g., \cite[Chapter 2]{LP17}). All our networks are assumed to consist solely of edges of resistance 1.
Given a finite rooted tree $\pr{T,v^*}$, we denote by $\Res_T$ the effective resistance between the root and the leaves in the associated electrical network. Likewise, given an infinite, locally finite rooted tree, we denote by $\Res_T$ the effective resistance between the root and infinity. It is well known that $\Res_T<\infty$ if and only if $T$ is transient (for the simple random walk).

To understand the motivation behind the variance-resistance connection,
it is useful to first consider the (real-valued) Gaussian free field model. Let $\pr{T,v^*}$ be a finite rooted tree. The density of the law of the root height $\phi\pr{v^*}$ (with respect to Lebesgue measure), denoted $f_{T,\mathrm{GFF}}:\RR\to\RRnonn$, satisfies the recursion
\begin{equation} \label{eq:gff_law_flow}
f_{T,\mathrm{GFF}} \propto \prod_{v\in N\pr{v^*}} \pr{f_{T[v],\mathrm{GFF}} * \varphi},
\end{equation}
where $\varphi(x)=\frac{1}{\sqrt{2\pi}}e^{-x^2/2}$. Since all operations involved are normalized products or convolutions with Gaussians, one deduces by induction that $\phi\pr{v^*}$ is Gaussian. It turns out that the variance of this Gaussian variable equals the effective resistance between the root and the leaves. Indeed, suppose that $f_{T[v],\mathrm{GFF}}(x) \propto e^{-x^2/2\sigma_v^2}$ for each child $v$ of the root. Then \eqref{eq:gff_law_flow} gives $f_{T,\mathrm{GFF}}(x) \propto e^{-x^2/2\sigma^2}$ where
\begin{equation} \label{eq:gff_sigma_flow}
\frac{1}{\sigma^2} = \sum_{v\in N\pr{v^*}} \frac{1}{\sigma_v^2 + 1}.
\end{equation}
On the other hand, as $T$ is composed of parallel copies of $T[v]$, each separated from the root by one additional edge of resistance 1, elementary electrical laws give that
\begin{equation} \label{eq:gff_resistance_flow}
\frac{1}{\Res_T} = \sum_{v\in N\pr{v^*}} \frac{1}{\Res_{T[v]} + 1}.
\end{equation}
Combining the two recursive formulas in \eqref{eq:gff_sigma_flow} and \eqref{eq:gff_resistance_flow}, we deduce by induction that $\sigma^2=\Res_T$ and
\begin{equation*}
\phi\pr{v^*}\sim \mathcal{N}\pr{0,\Res_T}.
\end{equation*}

We now consider the case of homomorphisms. Since the distribution at the root follows a similar recursive formula as in~\eqref{eq:gff_law_flow}, with the kernel $\1_{\cbr{-1,1}}$ replacing $\varphi$ (see \Cref{lem:hom_law_flow}), one may expect a similar behavior. Quantifying this behavior presents a challenge, as the laws are no longer Gaussian, and tracking the variance alone does not provide sufficient control. To this end, we introduce notions of strong log-concavity and limited log-concavity, measures that provide upper and lower bounds on the discrete log-curvature.
Here, by the \emph{discrete log-curvature} of an integer-valued probability law $f:2\ZZ\to\RRnonn$ or $f:2\ZZ+1\to\RRnonn$, we refer to the quantity $\frac{f(x-2)f(x+2)}{f(x)^2}$, which can be thought of as a discrete analogue of the second log-derivative.
Notably, (discrete) Gaussian variables are defined by a constant (discrete) log-curvature $e^{-1/\sigma^2}$ throughout their domain.

Let us begin with the localization argument, which is slightly simpler.
We say that $f$ is \textbf{\mbox{$\alpha$-strongly} log-concave} for $\alpha > 1$ if its support is a contiguous interval of even or odd integers and
\begin{equation}\label{eq:alpha-strongly-log-concave}
f(n)^2 \geq \alpha\cdot f(n-2)f(n+2) \qquad\text{for all } n.
\end{equation}
In words, this means that the discrete log-curvature of $f$ is bounded above by $1/\alpha$.
Mimicking the recursive approach above, we examine the propagation of this quantity up the tree (see \Cref{fig:flow}). Naturally, this requires studying its interaction with convolution by $\1_{\cbr{-1,1}}$ and with products. We find that this quantity propagates comparably to the effective resistance. Specifically, we shall see that if the root height law in the subtree $T[v]$ is $\alpha_v$-strongly log-concave for each child $v$ of the root, then the law in $T$ is $\alpha_{v^*}$-strongly log-concave where
\begin{equation} \label{eq:hom_alpha_flow}
\alpha_{v^*}=\prod_{v\in N\pr{v^*}}\frac{4}{\pr{\alpha_v^{-1}+1}^2}.
\end{equation}
This rule turns out to be roughly comparable to \eqref{eq:gff_resistance_flow} near $\alpha=1$ via the relation $\alpha_{v^*}=e^{1/\Res_T}$ (and $\alpha_v=e^{1/\Res_{T[v]}}$).
In particular, the quantities $(\alpha_v)_{v \in T}$ propagate along the tree in essentially the same way that the resistances $(\Res_{T[v]})_{v \in T}$ do, which leads to the conclusion that the root height law is $\alpha_{v^*}$-strongly log-concave with $\alpha_{v^*} = e^{1/\Res_T}$. Since strong log-concavity implies stochastic dominance by some discrete Gaussian, we obtain our upper bounds on the height at the root. In particular, since transient trees have finite resistance, this yields the localization result for transient trees.
\input{treefig}

Let us now discuss the delocalization argument, which follows a similar approach, but is slightly more technical. We say that a log-concave function $f$ is \textbf{$\pr{R,\gamma}$-limited log-concave} if is satisfies
\begin{equation*}
f(n)^2 \leq e^{\gamma/(R-|n|)}\cdot f(n-2)f(n+2) \qquad\text{whenever }|n|<R.
\end{equation*}
In contrast to strong log-concavity, which imposes a uniform upper bound on the log-curvature, limited log-concavity imposes a lower bound within a restricted interval around zero, which varies across the interval.
Such a restriction is necessary since the root height law is finitely supported, so any positive global lower bound on the log-curvature would fail to hold. To help accommodate this, the lower bound is designed to weaken as $\abs{n}$ increases, smoothly transitioning from a strong constraint near $n=0$ to no constraint at all at $n=\pm R$.

With this more involved notion of limited log-concavity, it turns out that the propagation of the $R$ parameter follows the same rule as the resistance in~\eqref{eq:gff_resistance_flow}. Specifically, we shall see that if $\gamma \geq 8$ and the root height law in the subtree $T[v]$ is $\pr{R_v,\gamma}$-limited log-concave for each child $v$ of the root, then the root height law in $T$ is $\pr{R_{v^*},\gamma}$-limited log-concave where
\begin{equation} \label{eq:hom_R_flow}
\frac{1}{R_{v^*}} = \sum_{v\in N\pr{v^*}} \frac{1}{R_v + 1}.
\end{equation}
In particular, this leads to the conclusion that the root height law is $\pr{\Res_T,\gamma}$-limited log-concave. Since limited log-concavity implies stochastic dominance of a certain truncated discrete Gaussian, we obtain our lower bounds on the height at the root. In particular, since recurrent trees have infinite resistance, this yields the delocalization result for recurrent trees.

The high-level proof idea for Lipschitz functions is nearly identical. The main obstacle is that the kernel $\1_{\cbr{-M,...,M}}$ does not possess the classical property of $\alpha$-strong log-concavity (analogous to~\eqref{eq:alpha-strongly-log-concave}) due to its flat shape. In particular, the point mass $\1_{\cbr{0}}$ already loses its strong log-concavity after a single convolution with this kernel. For this reason we introduce \textbf{$(\alpha,m)$-strong log-concavity}, designed so that the indicator function of an interval of length $m$ is strongly log-concave (here we set $m=2M+1$).
Namely, we require that $f$ is weakly log-concave and that
\begin{equation}
f(n+1)f(n+m) \geq \alpha\cdot f(n)f(n+m+1) \qquad\text{ for all }n \in \ZZ.
\end{equation}
As opposed to $\alpha$-strong log-concavity, which imposes a uniform upper bound of $\frac{1}{\alpha}$ on the log-curvature, $(\alpha,m)$-strong log-concavity only requires a uniform upper bound of $1$ (weak log-concavity) combined with a stricter upper bound on the average log-curvature over intervals of length $m$.
Using this modified version of log-concavity, the inductive argument continues mostly unchanged, with the propagation of the parameter $\alpha$ in this case following the modified rule
\begin{equation} \label{eq:lip_alpha_flow}
\alpha_{v^*}=\prod_{v\in N\pr{v^*}}\frac{2M+1}{2M+\alpha_v^{-2M-1}}.
\end{equation}
This rule implies a relation between $\alpha_{v*}$ and $\Res_T$, which leads to the localization result on transient trees.
The delocalization argument for Lipschitz functions is based on a similar approach as for homomorphisms.

\paragraph{Organization.}
In \Cref{sec:localization}, we develop useful properties of log-concavity and use them to derive bounds on the root height distribution, which in turn establish localization of homomorphisms on transient trees.
In \Cref{sec:delocalization}, we introduce limited log-concavity and use it to prove distribution bounds that lead to delocalization of homomorphisms on recurrent trees, thus completing the proof of \Cref{thm:equivalence_hom,thm:main_result_hom}.
In \Cref{sec:lipschitz} we prove \Cref{thm:equivalence_lip}, extending these results to integer-valued Lipschitz functions. We also present a partial analogue for the real-valued Lipschitz model.
In \Cref{sec:counterexample} we present counterexamples showing that the transient-recurrent dichotomy result does not hold in general for locally finite graphs.

%% file: treefig.tex
\begin{figure}[t]
\centering
\scalebox{0.75}{
\begin{forest}
for tree={
  draw, circle, minimum size=1.2em, inner sep=1pt,
  fill=blue!20,
  edge={-latex},
  s sep=18mm,
  l sep=16mm,
  label position=right, label distance=2pt,
}
[A, label={right:$R_A=\frac{1}{\frac{1}{R_B+1}+\frac{1}{R_C+1}}$}
  [B, label={right:$R_B=\frac{1}{\frac{1}{R_D+1}+\frac{1}{R_E+1}}$}
    [D, label={right:$R_D$}, tikz+={
            \draw[dotted] () -- +(-0.4,-0.6);
            \draw[dotted] () -- +(0.4,-0.6);} ]
    [E, label={right:$R_E$}, tikz+={
            \draw[dotted] () -- +(-0.4,-0.6);
            \draw[dotted] () -- +(0.4,-0.6);} ]
  ]
  [C, label={right:$R_C=\frac{1}{\frac{1}{R_F+1}+\frac{1}{R_G+1}}$}
    [F, label={right:$R_F$}, tikz+={
            \draw[dotted] () -- +(-0.4,-0.6);
            \draw[dotted] () -- +(0.4,-0.6);} ]
    [G, label={right:$R_G$}, tikz+={
            \draw[dotted] () -- +(-0.4,-0.6);
            \draw[dotted] () -- +(0.4,-0.6);} ]
  ]
]
\end{forest}
\begin{forest}
for tree={
  draw, circle, minimum size=1.2em, inner sep=1pt,
  fill=blue!20,
  edge={-latex},
  s sep=18mm,
  l sep=16mm,
  label position=right, label distance=2pt,
}
[A, label={right:$\alpha_A=\Delta\pr{\alpha_B}\cdot\Delta\pr{\alpha_C}$}
  [B, label={right:$\alpha_B=\Delta\pr{\alpha_D}\cdot\Delta\pr{\alpha_E}$}
    [D, label={right:$\alpha_D$}, tikz+={
            \draw[dotted] () -- +(-0.4,-0.6);
            \draw[dotted] () -- +(0.4,-0.6);} ]
    [E, label={right:$\alpha_E$}, tikz+={
            \draw[dotted] () -- +(-0.4,-0.6);
            \draw[dotted] () -- +(0.4,-0.6);} ]
  ]
  [C, label={right:$\alpha_C=\Delta\pr{\alpha_F}\cdot\Delta\pr{\alpha_G}$}
    [F, label={right:$\alpha_F$}, tikz+={
            \draw[dotted] () -- +(-0.4,-0.6);
            \draw[dotted] () -- +(0.4,-0.6);} ]
    [G, label={right:$\alpha_G$}, tikz+={
            \draw[dotted] () -- +(-0.4,-0.6);
            \draw[dotted] () -- +(0.4,-0.6);} ]
  ]
]
\end{forest}
}
\caption{\emph{Left:} The effective resistance between the root and the leaves in various subtrees
of a finite rooted tree $T$. The resistance for each subtree is written next to its root. \emph{Right:} Parameters $\alpha$ for which the law of the root height for a random homomorphism is $\alpha$-strong log concave, written for the same subtrees. Here, $\Delta(\alpha)=4\pr{\alpha^{-1}+1}^{-2}$.}
\label{fig:flow}
\end{figure}

%% file: localized.tex
\section{Localization of homomorphisms on transient trees} \label{sec:localization}

In this section we prove the localization part of \Cref{thm:equivalence_hom} (\Cref{thm:hom_localization} below) and the upper bounds in \Cref{thm:main_result_hom} (\Cref{cor:upper_bounds_on_hom_root_laws} below). 
We achieve this by showing that $\alpha$-strong log-concavity propagates in a comparable way to the effective resistance, as explained in the proof outline above.

Given a finite odd or even rooted tree $(T,v^*)$, we denote by $f_{T, \mathrm{Hom}}$ the law of $\phi(v^*)$, where $\phi:T\to\ZZ$ is a uniformly random homomorphism vanishing on the set of leaves. Recall that for a vertex $v$, $N(v)$ denotes its set of children, $T[v]$ is the subtree rooted at $v$, and $\mathcal{R}_T$ is the effective resistance between the root and the leaves in $T$.

Note the parity condition imposed by homomorphisms: The height at the root has the same parity as its distance from the leaves. Consequently, $f_{T, \mathrm{Hom}}$ has domain $2\ZZ$ or $2\ZZ+1$ when $T$ is an even or odd tree, respectively.

\subsection{Tree recursion}

The core argument connecting the properties of $f_{T, \mathrm{Hom}}$ to the effective resistance revolves around the recursive nature of both, as is reflected by the following two simple lemmas.

\begin{lemma} \label{lem:hom_law_flow}
Let $(T,v^*)$ be a nontrivial odd or even rooted tree. Then
\begin{equation*}
f_{T,\mathrm{Hom}} \propto \prod_{v\in N\pr{v^*}} \pr{f_{T[v],\mathrm{Hom}} * \1_{\cbr{-1,1}}} .
\end{equation*}
\end{lemma}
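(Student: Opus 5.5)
The plan is to count homomorphisms directly and factor the count over the subtrees hanging from the root. For a finite rooted tree $(T,v^*)$ and $x\in\ZZ$, let $Z_T(x)$ denote the number of homomorphisms $\phi:V(T)\to\ZZ$ with $\phi\equiv 0$ on the leaves and $\phi(v^*)=x$. Then $f_{T,\mathrm{Hom}}(x)=Z_T(x)/\sum_y Z_T(y)$. The total count is finite and positive: positive because, by the parity assumption, the alternating function $0,\pm1$ pattern (the value $0$ or $1$ according to the parity of the distance to the leaves) is a valid homomorphism; finite because every vertex lies within bounded distance of a leaf. So it suffices to show
\begin{equation*}
Z_T(x)=\prod_{v\in N(v^*)} \sum_{y\in\{x-1,x+1\}} Z_{T[v]}(y)=\prod_{v\in N(v^*)}\pr{Z_{T[v]}*\1_{\cbr{-1,1}}}(x).
\end{equation*}
Normalizing each factor $Z_{T[v]}$ to $f_{T[v],\mathrm{Hom}}$ only changes the product by a multiplicative constant independent of $x$, which gives the claimed proportionality.

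First, fix the convention for the leaves. We take the leaves of $T[v]$ to be exactly the leaves of $T$ that descend from $v$. When $v$ is itself a leaf of $T$, $T[v]$ is the single vertex $v$, and $f_{T[v],\mathrm{Hom}}=\1_{\cbr{0}}$. We also note that in a nontrivial tree the root is not counted as a leaf even if it has degree one; the root is the vertex whose value we are tracking. It is worth checking that $T[v]$ is odd or even whenever $T$ is, since all leaves of $T[v]$ sit at the distance from $v$ equal to their distance from $v^*$ minus one. This keeps the lemma applicable recursively, although it is not needed for the identity itself.

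Next, the bijection. Removing $v^*$ splits $T$ into the disjoint subtrees $T[v]$ for $v\in N(v^*)$. The edges of $T$ are the edges inside the subtrees together with the edges $(v^*,v)$. So a function $\phi$ with $\phi(v^*)=x$ is a valid homomorphism vanishing on the leaves exactly when two conditions hold. Each restriction $\phi|_{T[v]}$ must be a homomorphism on $T[v]$ vanishing on its leaves, and each child value must satisfy $\phi(v)\in\{x-1,x+1\}$. These constraints involve disjoint sets of vertices, so the count factorizes as a product over children. For each child we sum over the two admissible values $y=\phi(v)$, and the number of valid extensions inside $T[v]$ is $Z_{T[v]}(y)$. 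This is the displayed identity.

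There is no real obstacle here. The only points needing care are the leaf conventions just described and the observation that the normalizing constants are independent of $x$. Together these turn the product of counts into a product of laws up to proportionality.
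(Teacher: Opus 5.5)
Your proposal is correct and is the same argument as the paper's. You fix the root value $x$, split the constraints over the subtrees $T[v]$ with each child taking a value in $\cbr{x-1,x+1}$, factor the count as $\prod_{v}(Z_{T[v]}*\1_{\cbr{-1,1}})(x)$, and note that the normalizing constants do not depend on $x$. Your extra care on two points is consistent with the paper's implicit conventions: the root of a nontrivial tree is not counted as a leaf (which the paper relies on when it adds a degree-one root $v^{**}$), and the counts are positive by parity.
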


\begin{proof}
Fix $n\in\ZZ$. We count the number of homomorphisms assigning the leaves height 0 and the root height $n$. This quantity is proportional to $f_{T,\mathrm{Hom}}(n)$. On the other hand, by considering all possible choices of assigning each child $v\in N\pr{v^*}$ a height $n\pm 1$, it is also proportional to the sum of the expression
\begin{equation*}
\prod_{v\in N\pr{v^*}} \pr{f_{T[v],\mathrm{Hom}}\pr{n\pm 1}}
\end{equation*}
taken over all possible sign choices, which is precisely $\prod_{v\in N\pr{v^*}} \pr{f_{T[v],\mathrm{Hom}} * \1_{\cbr{-1,1}}} (n)$.
\end{proof}

\begin{lemma} \label{lem:resistance_flow}
Let $(T,v^*)$ be a nontrivial odd or even rooted tree. Then
\begin{equation*}
\frac{1}{\mathcal{R}_T}
= \sum_{v\in N\pr{v^*}} \frac{1}{\mathcal{R}_{T[v]}+1} .
\end{equation*}
\end{lemma}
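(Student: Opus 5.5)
We prove \Cref{lem:resistance_flow} using the series and parallel laws for effective resistance, applied to the decomposition of $T$ at its root. Throughout, all edges have unit resistance, and the leaves of $T$ are shorted together into a single sink. One degenerate case must be fixed first. If a child $v$ is itself a leaf of $T$, then $T[v]$ is a single vertex, and we set $\mathcal{R}_{T[v]}=0$. This is the natural convention, since the root and the leaf set of $T[v]$ coincide. With it, the edge $v^*v$ contributes conductance $1$, which matches the formula.

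Fix $v\in N(v^*)$. Let $B_v$ be the branch of $T$ consisting of the edge $v^*v$ together with the subtree $T[v]$. The leaves of $T$ lying in $B_v$ are exactly the leaves of $T[v]$; note that $v^*$ is not a leaf, since the tree is nontrivial. We will use the following facts about $B_v$.

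First, distinct branches meet only at $v^*$ and at the common sink. So the network between $v^*$ and the (shorted) leaves is the parallel combination of the networks $B_v$.

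Second, inside $B_v$, every path from $v^*$ to the sink passes through $v$. Hence $B_v$ is the series combination of the edge $v^*v$, which has resistance $1$, and the network $T[v]$ between $v$ and its leaves, which has resistance $\mathcal{R}_{T[v]}$.

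Combining the two facts, the series law gives the effective resistance of $B_v$ as $\mathcal{R}_{T[v]}+1$. The parallel law then says the conductances of the branches add, which yields $1/\mathcal{R}_T=\sum_v 1/(\mathcal{R}_{T[v]}+1)$.

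The only step needing care is justifying the series and parallel reductions when the sink is a shorted set of vertices. One option is to cite the standard series/parallel laws (e.g.\ \cite[Chapter 2]{LP17}). Alternatively, the result can be verified directly from the unit-voltage characterization. Let $h$ be the voltage that equals $1$ at $v^*$, equals $0$ on the leaves, and is harmonic elsewhere. Restricted to $T[v]$, the function $h$ is harmonic with boundary values $h(v)$ at $v$ and $0$ at the leaves of $T[v]$.

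Harmonicity at $v$ involves only the edge $v^*v$ and the edges into $T[v]$. It therefore forces flow continuity at $v$:
\[
1-h(v)=h(v)/\mathcal{R}_{T[v]},
\]
so $h(v)=\mathcal{R}_{T[v]}/(\mathcal{R}_{T[v]}+1)$. The current leaving $v^*$ along $v^*v$ is then $1-h(v)=1/(\mathcal{R}_{T[v]}+1)$. Summing over $v\in N(v^*)$ gives the total current, which equals $1/\mathcal{R}_T$.

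In the degenerate case where $v$ is a leaf, we have $h(v)=0$ directly, and the edge $v^*v$ carries current $1$, consistent with the convention $\mathcal{R}_{T[v]}=0$.
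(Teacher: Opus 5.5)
Your proof is correct and follows the same route as the paper. The paper's proof simply observes that $T$ is the parallel combination of the branches formed by the edge $v^*v$ in series with $T[v]$, and then invokes the series and parallel laws. Your unit-voltage verification and the explicit convention $\mathcal{R}_{T[v]}=0$ for a leaf child (which the paper uses implicitly in its base cases) are extra detail the paper leaves unstated.
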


\begin{proof}
Observe that $T$ is composed of copies of $T[v]$ for $v\in N\pr{v^*}$ connected in parallel to $v^*$, each separated from it by an additional edge of resistance 1. The result follows from the elementary series and parallel laws.
\end{proof}

\subsection{Log-concavity}
\begin{definition}
Let $f:Z\to\RRnonn$, where $Z=2\ZZ$ or $Z=2\ZZ+1$. We say that $f$ is \emph{(weakly) log-concave} if its support consists of a contiguous sequence of even or odd integers, and
\begin{equation*}
f(n)^2 \geq f(n-2)f(n+2) \qquad\text{for all } n \in Z.
\end{equation*}
We further say that $f$ is \emph{$\alpha$-strongly log-concave} for $\alpha > 1$ if
\begin{equation*}
f(n)^2 \geq \alpha\cdot f(n-2)f(n+2) \qquad\text{for all } n \in Z.
\end{equation*}
\end{definition}

Note that these are simply uniform upper bounds on the log-curvature of $f$ where it is well-defined. As we show in \Cref{thm:hom_root_law_is_strong_log_conc}, the law $f_{T, \mathrm{Hom}}$ is $\alpha$-strongly log-concave for every tree $T$, where $\alpha$ only depends on the effective resistance between the root and the leaves. Following \Cref{lem:hom_law_flow}, the building blocks of our inductive argument are interactions between strong log-concavity and convolution by $\1_{\cbr{-1,1}}$ or products of probability laws.

\begin{proposition} \label{prop:strong_log_conc_multiplicativity}
Let $f_1,f_2:Z\to\RRnonn$ such that $f_i$ is $\alpha_i$-strongly log-concave, where $Z=2\ZZ$ or $Z=2\ZZ+1$.
Then $f_1\cdot f_2$ is $\pr{\alpha_1\alpha_2}$-strongly log-concave.
\end{proposition}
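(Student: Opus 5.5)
Multiplying the two defining inequalities pointwise should give the curvature bound directly; the real work is the support condition, and in particular the points where some factor vanishes.

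\emph{Support.} The support of $f_1 f_2$ is the intersection of the supports of $f_1$ and $f_2$. Each of these is a contiguous sequence of integers in $Z$, all of the same parity as $Z$. An intersection of two such contiguous sequences is again a contiguous sequence, possibly empty. If it is empty, then $f_1 f_2 \equiv 0$ and the inequality holds trivially. I will either accept that degenerate case as satisfying the definition or note that it never arises in the application.

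\emph{Curvature inequality.} Fix $n\in Z$. For each $i$ we have $f_i(n)^2 \ge \alpha_i f_i(n-2)f_i(n+2)$, and both sides are nonnegative. Multiplying the two inequalities is legitimate, since $0\le a\le b$ and $0\le c\le d$ imply $ac\le bd$. This gives
\begin{equation*}
(f_1f_2)(n)^2 = f_1(n)^2 f_2(n)^2 \ge \alpha_1\alpha_2\, f_1(n-2)f_1(n+2)f_2(n-2)f_2(n+2) = \alpha_1\alpha_2\,(f_1f_2)(n-2)\,(f_1f_2)(n+2).
\end{equation*}
This holds for every $n\in Z$, including points where some value vanishes, because the individual inequalities hold there too.

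\emph{Main obstacle.} The only delicate point is the one above: checking that the pointwise product of the two inequalities is valid at zeros and that the support remains contiguous. Both follow from nonnegativity and the fact that an intersection of intervals is an interval. Finally, $\alpha_1\alpha_2>1$ because each $\alpha_i>1$, so $f_1f_2$ is $(\alpha_1\alpha_2)$-strongly log-concave.
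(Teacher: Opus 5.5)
Your proposal is correct and follows the paper's proof essentially verbatim: you multiply the two defining inequalities pointwise (valid by nonnegativity) and observe that the support of $f_1 f_2$ is the intersection of two contiguous supports, hence contiguous. Your remark on the degenerate empty-support case is a small extra that the paper leaves implicit.
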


\begin{proof}
Denote $g\coloneqq f_1\cdot f_2$. For any $n\in Z$, we have
\begin{align*}
g(n)^2 &=f_1(n)^2\cdot f_2(n)^2 \\
 &\leq \alpha_1\alpha_2
\cdot f_1 (n-2) f_1 (n+2) \cdot f_2 (n-2) f_2 (n+2) \\
&= \alpha_1 \alpha_2 \cdot g(n-2) g(n+2) .
\end{align*}
Moreover, $g$ has contiguous support, which is the intersection of the supports of $f_1$ and $f_2$. Therefore $g$ is $\pr{\alpha_1\alpha_2}$-strongly log-concave.
\end{proof}

\begin{proposition} \label{prop:conv_strong_log_conc}
Let $f:Z\to\RRnonn$ be $\alpha$-strongly log-concave, where $Z=2\ZZ$ or $Z=2\ZZ+1$. Then $f*\1_{\cbr{-1,1}}$ is $\beta$-strongly log-concave for $\beta=4\pr{\alpha^{-1}+1}^{-2}$.
\end{proposition}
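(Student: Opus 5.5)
The plan is to verify the defining inequality of $\beta$-strong log-concavity pointwise, using only the two instances of $\alpha$-strong log-concavity of $f$ that involve the relevant values. Write $g \coloneqq f*\1_{\cbr{-1,1}}$, so $g(n)=f(n-1)+f(n+1)$; note that $g$ lives on the opposite parity class to $f$. Fix $n$ and set
\begin{equation*}
c=f(n-3),\quad a=f(n-1),\quad b=f(n+1),\quad d=f(n+3).
\end{equation*}
Then $g(n)=a+b$, $g(n-2)=c+a$ and $g(n+2)=b+d$. The goal is
\begin{equation*}
(a+b)^2 \geq \beta\,(a+c)(b+d).
\end{equation*}
First I dispose of support issues. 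If the support of $f$ is the contiguous set $\{p,p+2,\dots,q\}$, then the support of $g$ is $\{p-1,\dots,q+1\}$, which is again contiguous. For the inequality itself, suppose $a=0$ or $b=0$. By contiguity of the support of $f$, at least one of $g(n-2)$ and $g(n+2)$ then vanishes: for instance, if $b=0<a$ then $d=0$ and $g(n+2)=0$. In that case the right-hand side is $0$ and there is nothing to prove. So I may assume $a,b>0$.

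In the main case $a,b>0$, $\alpha$-strong log-concavity of $f$ at $n-1$ and at $n+1$ gives $a^2\ge \alpha cb$ and $b^2\ge\alpha ad$. Hence
\begin{equation*}
c\le \frac{a^2}{\alpha b},\qquad d\le \frac{b^2}{\alpha a}.
\end{equation*}
Substituting these bounds yields
\begin{equation*}
(a+c)(b+d)\le ab\Bigl(1+\frac{t}{\alpha}\Bigr)\Bigl(1+\frac{1}{\alpha t}\Bigr)=ab\Bigl(1+\alpha^{-2}+\frac{s}{\alpha}\Bigr),
\end{equation*}
where $t=a/b$ and $s=t+1/t\ge 2$. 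Likewise $(a+b)^2=ab\,(s+2)$. It therefore suffices to show
\begin{equation*}
\frac{s+2}{1+\alpha^{-2}+s/\alpha}\;\ge\;\beta \qquad\text{for all } s\ge 2.
\end{equation*}

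The only real step is this one-variable minimization. The map $s\mapsto (s+2)/(A+Bs)$, with $A=1+\alpha^{-2}$ and $B=\alpha^{-1}$, is nondecreasing exactly when $A-2B\ge 0$. Here $A-2B=(1-\alpha^{-1})^2\ge 0$, so the minimum over $s\ge 2$ is attained at $s=2$, that is, at $a=b$. Its value there is
\begin{equation*}
\frac{4}{1+2\alpha^{-1}+\alpha^{-2}}=4\pr{\alpha^{-1}+1}^{-2}=\beta.
\end{equation*}
This completes the plan. I expect no step to be a serious obstacle. The point needing the most care is the degenerate-support bookkeeping in the first step, making sure that whenever the substitution of $c$ and $d$ cannot be used, the right-hand side vanishes. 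The algebra is routine once the reduction to $s=t+1/t$ is made.
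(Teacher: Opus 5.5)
Your proof is correct and follows essentially the same argument as the paper. Both use the same contiguity reduction to $f(n\pm1)>0$, the same two instances of $\alpha$-strong log-concavity at $n\pm1$ to eliminate $f(n\pm3)$, and a one-variable minimization at the balanced point. The only difference is cosmetic: you pass to $s=t+1/t$, which makes the ratio linear-fractional with derivative sign $(1-\alpha^{-1})^2\ge 0$, whereas the paper computes the log-derivative of $F(t)=(1+t)^2\big/\big((\alpha^{-1}+t)(1+t/\alpha)\big)$ directly.
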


\begin{proof}
Denote $g\coloneqq f*\1_{\cbr{-1,1}}$. We need to prove that $g(n)^2\geq \beta \cdot g(n-2)g(n+2)$ for all $n\in Z+1$, or equivalently that
\begin{equation*}
\pr{f(n-1)+f(n+1)}^2 \geq \beta \pr{f(n-3)+f(n-1)}\pr{f(n+1)+f(n+3)}.
\end{equation*}
We may assume that $f(n-1)$ and $f(n+1)$ are positive, otherwise the right-hand must vanish since $f$ has a contiguous support. Therefore $f(n-3)\leq \frac{f(n-1)^2}{\alpha f(n+1)}$ and $f(n+3)\leq \frac{f(n+1)^2}{\alpha f(n-1)}$ by strong log-concavity of $f$. It thus suffices to prove that
\begin{equation*}
\pr{f(n-1)+f(n+1)}^2 \geq \beta \pr{\frac{f(n-1)^2}{\alpha f(n+1)}+f(n-1)}\pr{f(n+1)+\frac{f(n+1)^2}{\alpha f(n-1)}}.
\end{equation*}
Writing $t\coloneqq \frac{f(n+1)}{f(n-1)}$ and dividing by $f(n-1)^2$, this is equivalent to
\begin{equation} \label{ineq:strong_log_conc_abt_condition}
\pr{1+t}^2 \geq \beta \pr{\frac{1}{\alpha}+t}\pr{1+\frac{t}{\alpha}} .
\end{equation}
Denote $F(x)\coloneqq\pr{1+x}^2{\pr{\frac{1}{\alpha}+x}}^{-1}{\pr{1+\frac{x}{\alpha}}}^{-1}$. Observe that $F$ is minimized at $x=1$, as its log-derivative is
\begin{equation*}
\frac{d}{dx} \log F(x) =
\frac{2}{1+x} - \frac{1}{\frac{1}{\alpha}+x} - \frac{1}{\alpha+x}=
\frac{(\alpha-1)^2(x-1)}{\pr{1+\alpha x}\pr{\alpha+x}(1+x)},
\end{equation*}
which is negative on $(0,1)$ and positive on $(1,\infty)$. Therefore $F(t)\geq F(1) = \beta$, yielding \eqref{ineq:strong_log_conc_abt_condition}.

Finally, the support of $g$ is a contiguous sequence of integers of the same parity. Namely, if the support of $f$ is $[a,b]\cap Z$ then the support of $g$ is $[a-1,b+1]\cap(Z+1)$. Therefore $g$ is $\beta$-strong log-concave.
\end{proof}

We quickly note that the same closure properites hold for weak log-concavity as well, for example by replaying the proofs of \Cref{prop:strong_log_conc_multiplicativity,prop:conv_strong_log_conc} with $\alpha=1$.

\begin{proposition} \label{prop:weak_log_conc_closure}
The family of weakly log-concave functions is closed under products and under convolution by $1_{\cbr{-1,1}}$.
\end{proposition}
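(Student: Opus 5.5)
The plan is to replay the proofs of \Cref{prop:strong_log_conc_multiplicativity} and \Cref{prop:conv_strong_log_conc} with $\alpha=1$, checking at each step that nothing used the strict inequality $\alpha>1$. There are two closure claims, and the only genuinely new point in each is the support condition.

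\textbf{Products.} Let $f_1,f_2$ be weakly log-concave on $Z$ and set $g=f_1 f_2$. The chain of inequalities in \Cref{prop:strong_log_conc_multiplicativity} with $\alpha_1=\alpha_2=1$ gives $g(n)^2\ge g(n-2)g(n+2)$ for every $n$. The support of $g$ is the intersection of two contiguous runs of integers of the same parity, so it is again contiguous (possibly empty, in which case the claim is vacuous).

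\textbf{Convolution.} Let $f$ be weakly log-concave and set $g=f*\1_{\cbr{-1,1}}$. We must show
\[
(f(n-1)+f(n+1))^2\ge (f(n-3)+f(n-1))(f(n+1)+f(n+3)).
\]
If $f(n-1)$ or $f(n+1)$ vanishes, then by contiguity of the support one factor on the right vanishes. In the mixed case this needs a check: say $f(n-1)=0<f(n+1)$. Contiguity forces $f(n-3)=0$, so the first factor on the right is zero and the inequality holds. The symmetric case is identical.

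Otherwise both values are positive, and weak log-concavity gives $f(n-3)\le f(n-1)^2/f(n+1)$ and $f(n+3)\le f(n+1)^2/f(n-1)$. Substituting and writing $t=f(n+1)/f(n-1)$ reduces the claim to
\[
(1+t)^2\ge (1+t)(1+t),
\]
which holds with equality. This is exactly the case $\alpha=1$, $\beta=4(1+1)^{-2}=1$ of \eqref{ineq:strong_log_conc_abt_condition}, and no minimization of $F$ is needed. The support of $g$ is $[a-1,b+1]\cap(Z+1)$, which is contiguous.

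The only point needing care is that the earlier proofs assume $\alpha>1$ in the definition. Everything used there (the substitution bounds and the support argument) holds verbatim for $\alpha=1$, so no real obstacle arises.
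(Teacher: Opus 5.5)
Your proposal is correct and takes the same approach as the paper, which justifies this proposition only by saying one can replay the proofs of \Cref{prop:strong_log_conc_multiplicativity,prop:conv_strong_log_conc} with $\alpha=1$. Your write-up supplies those details: the contiguity check for supports, the handling of the vanishing cases, and the observation that \eqref{ineq:strong_log_conc_abt_condition} becomes the identity $(1+t)^2=(1+t)^2$ when $\alpha=\beta=1$.
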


In order to later leverage the strong log-concavity of the root height law $f_{T,\mathrm{Hom}}$, we provide distribution bounds on strongly log-concave laws.
The following lemma, which is a particular case of \cite[Theorem~1.C.1]{SS07}, helps to establish such a bound in the form of stochastic dominance.

\begin{lemma} \label{lem:likelyhood_dom_implies_stoch_dom}
Let $X,Y$ be integer-valued random variables with laws $f_X,f_Y:\ZZ\to[0,1]$ respectively. Suppose that $f_Y\pr{n}/f_X\pr{n}$ is increasing over the union of the supports of $X$ and $Y$ (where $a/0$ is taken to be equal to $\infty$ whenever $a>0$).
Then $Y$ stochastically dominates $X$.
\end{lemma}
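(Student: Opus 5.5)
The plan is to compare the tail sums $\sum_{m\ge n} f_Y(m)$ and $\sum_{m\ge n} f_X(m)$ directly, using the monotone likelihood ratio. Let $S$ denote the union of the supports, and write $r(m)=f_Y(m)/f_X(m)$ for $m\in S$. The hypothesis says $r$ is nondecreasing along $S$, with the value $\infty$ allowed only where $f_X$ vanishes. We must show $\prob{Y\ge n}\ge\prob{X\ge n}$ for every integer $n$.

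Fix $n$ and split $S$ into $S_-=\{m\in S: m<n\}$ and $S_+=\{m\in S: m\ge n\}$. If $\prob{X\ge n}=0$ there is nothing to prove, and if $\prob{X<n}=0$ the claim reads $\prob{Y\ge n}\ge 1$. In that second case every point of $S_-$ has $f_X=0$, so $r=\infty$ there whenever $f_Y>0$. Monotonicity then forces $r=\infty$ on all of $S_+$ as well, so $f_X\equiv0$ on $S$, which is a contradiction. Hence $f_Y\equiv0$ on $S_-$ and the claim holds.

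In the main case both $\prob{X<n}$ and $\prob{X\ge n}$ are positive. Set $c=\sup_{m\in S_-}r(m)\le\inf_{m\in S_+}r(m)$, where $c$ is finite because some $m\in S_-$ has $f_X(m)>0$. This gives $f_Y(m)\le c f_X(m)$ on $S_-$ and $f_Y(m)\ge c f_X(m)$ on $S_+$. The second inequality still holds at points where $f_X=0$, since there the right side is $0$.

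Summing these inequalities gives $\prob{Y<n}\le c\,\prob{X<n}$ and $\prob{Y\ge n}\ge c\,\prob{X\ge n}$. If $c\ge1$, the second inequality finishes the proof. If $c<1$, write $1-\prob{Y\ge n}=\prob{Y<n}\le c\,\prob{X<n}\le \prob{X<n}=1-\prob{X\ge n}$, which again gives $\prob{Y\ge n}\ge\prob{X\ge n}$.

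The main care needed is the bookkeeping around zeros and the $\infty$ convention: $S_-$ or $S_+$ may be empty, and one must check that the points where the ratio is infinite sit only at the top of $S$. No analytic step is expected to be hard.
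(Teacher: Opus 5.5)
Your proof is correct. Since the paper gives no proof of this lemma (it quotes it as a special case of \cite[Theorem~1.C.1]{SS07}, the general fact that the likelihood ratio order implies the usual stochastic order), your argument is a genuinely different, self-contained route. The single-crossing idea is this: $r\le c$ on $S_-$ and $r\ge c$ on $S_+$, so you compare both tail masses against $c$ and then use that both laws have total mass $1$. This is the natural elementary proof for lattice laws. The citation gives generality (arbitrary distributions) at no cost in length. Your version makes the handling of the convention $a/0=\infty$ explicit, and that is exactly the situation in \Cref{prop:bounds_on_limited_log_conc_laws}: there the dominated variable is a truncated discrete Gaussian, so the likelihood ratio equals $\infty$ at the top of the union of supports.

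One justification needs repair. The finiteness of $c$ does not follow from the existence of some $m\in S_-$ with $f_X(m)>0$. A single finite value of $r$ on $S_-$ says nothing about its supremum; that point only guarantees $S_-\neq\emptyset$. Use $S_+$ instead: since $\prob{X\ge n}>0$, there is $m'\ge n$ with $f_X(m')>0$, so $c\le\inf_{S_+}r\le r(m')<\infty$. Finiteness is genuinely needed for two reasons. First, it makes $c f_X(m)=0$ wherever $f_X(m)=0$ on $S_+$, avoiding $\infty\cdot 0$. Second, it shows that every $m\in S_-$ has $r(m)\le c<\infty$, hence $f_X(m)>0$ and $f_Y(m)=r(m)f_X(m)\le c f_X(m)$. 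With this one-line change the argument is complete.
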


For technical reasons, we only derive bounds for even-valued distributions. Similar, yet slightly different bounds hold in the odd-valued case.

\begin{proposition} \label{prop:bounds_on_strong_log_conc_laws}
Let $X$ be a $2\ZZ$-valued random variable, whose law is symmetric around zero and $\alpha$-strongly log-concave. Then the following hold:
\begin{enumerate}[(a)]
    \item $\abs{X}$ is stochastically dominated by $\abs{\mathcal{N}_{2\ZZ} \pr{0,\frac{4}{\log\pr{\alpha}}}}$; \label{item:strong_stoch_dom}
    \item $\var{X} \leq \frac{4}{\log(\alpha)}$; \label{item:strong_variance_bound}
    \item $\prob{X = 2n}\leq \alpha^{-n^2/2}\cdot\prob{X = 0}$ for all $n\in\ZZ$. \label{item:strong_pointwise_bound}
\end{enumerate}
\end{proposition}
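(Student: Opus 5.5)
The plan is to compare $X$ with the discrete Gaussian $Y\sim\mathcal{N}_{2\ZZ}\pr{0,\sigma^2}$, $\sigma^2=\frac{4}{\log\alpha}$, through a likelihood-ratio argument on the nonnegative half-line. Write $f$ for the law of $X$ and $h(2n)\propto e^{-(2n)^2/2\sigma^2}=\alpha^{-n^2/2}$ for the law of $Y$. The discrete log-curvature of $h$ is exactly $\frac{h(2n-2)h(2n+2)}{h(2n)^2}=\alpha^{-1}$, so $\alpha$-strong log-concavity of $f$ says precisely that $r(n)\coloneqq\log f(2n)-\log h(2n)$ is concave on the support of $f$, viewed as a function of $n$: its second difference is $\log\frac{f(2n-2)f(2n+2)}{f(2n)^2}+\log\alpha\le 0$.

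For \ref{item:strong_pointwise_bound}, the support of $f$ is a symmetric contiguous interval containing $0$, and $r$ is symmetric and concave there, so $r$ is maximized at $n=0$. This gives $f(2n)/f(0)\le h(2n)/h(0)=\alpha^{-n^2/2}$, and outside the support the bound is trivial. Equivalently, one can telescope: by strong log-concavity $\frac{f(2k)}{f(2k+2)}\ge\alpha\frac{f(2k-2)}{f(2k)}$, and symmetry gives $f(2)/f(0)\le\alpha^{-1/2}$. Iterating yields $f(2k+2)/f(2k)\le\alpha^{-(2k+1)/2}$, and multiplying over $k$ gives the claim.

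For \ref{item:strong_stoch_dom}, the laws of $\abs X$ and $\abs Y$ are $2f(2n)$ and $2h(2n)$ for $n\ge1$, together with $f(0)$ and $h(0)$ at $0$. The ratio of the law of $\abs Y$ to that of $\abs X$ at $2n$ is therefore $c\,e^{-r(n)}$ for $n\ge 0$, where the factor $2$ cancels uniformly. Since $r$ is concave and symmetric, it is nonincreasing on $n\ge0$, so this ratio is nondecreasing on the support of $\abs X$. Beyond the support of $\abs X$ the ratio is $\infty$ by convention. \Cref{lem:likelyhood_dom_implies_stoch_dom} then gives $\abs{X}\le_{st}\abs{Y}$. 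The only care needed is the boundary bookkeeping: the ratio must be checked to be increasing over the union of supports, including the jump to $\infty$.

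For \ref{item:strong_variance_bound}, $\var X=\expected{X^2}=\expected{\abs X^2}\le\expected{\abs Y^2}=\var Y$, using the stochastic domination and the monotonicity of $x\mapsto x^2$ on $[0,\infty)$. It remains to show that the discrete Gaussian on $2\ZZ$ has variance at most $\sigma^2$, and I expect this to be the main obstacle. One approach is Poisson summation on the theta function. Writing $Z(s)=\sum_{k}e^{-sk^2}$ with $s=\frac{2}{\sigma^2}$, we have $\var Y=4\,\expected{k^2}=-4\,(\log Z)'(s)$. By Poisson summation, $Z(s)=\sqrt{\pi/s}\,\sum_j e^{-\pi^2j^2/s}$. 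The second factor is increasing in $s$, so $-(\log Z)'(s)\le\frac{1}{2s}$, which gives $\var Y\le\frac{2}{s}=\sigma^2$. An alternative is a direct comparison with the continuous Gaussian via log-concavity, for instance a Brascamp--Lieb type discrete inequality, but the Poisson summation route is cleaner and is the one I would use.
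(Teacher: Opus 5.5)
Your proposal is correct and follows the paper's proof. The monotonicity of $r(n)=\log f(2n)-\log h(2n)$ on $n\ge 0$ is exactly the paper's successive-quotient bound $f(2n+2)/f(2n)\le\alpha^{-n-1/2}$, which the paper likewise feeds into \Cref{lem:likelyhood_dom_implies_stoch_dom} for (a) and telescopes for (c). The only difference is in (b): the paper simply cites the bound $\var\mathcal{N}_{\ZZ}(0,\sigma^2)\le\sigma^2$, whereas your Poisson-summation argument, using that the dual theta factor $\sum_j e^{-\pi^2 j^2/s}$ is increasing in $s$, gives a correct self-contained proof of it.
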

\begin{proof}
Let $f:2\ZZ\to[0,1]$ be the law of $X$. Towards showing \ref{item:strong_stoch_dom}, and motivated by \Cref{lem:likelyhood_dom_implies_stoch_dom}, we aim to upper bound the successive quotients $f(2n+2)/f(2n)$. By symmetry and $\alpha$-strong log-concavity of $f$, we have $f(0)^2 \geq \alpha f(-2)f(2) = \alpha f(2)^2$ so that
\begin{equation*}
f(2) \leq \alpha^{-1/2} f(0).
\end{equation*}
Furthermore, when $2n\geq0$ is in the support of $f$ we have $f(2n)^2 \geq \alpha f(2n-2)f(2n+2)$, so that
\begin{equation*}
\frac{f(2n+2)}{f(2n)} \leq \frac{\alpha^{-1}f(2n)}{f(2n-2)} .
\end{equation*}
Repeated application of this inequality yields
\begin{equation} \label{ineq:localization_stoch_dom_helper_ineqs}
\frac{f(2n+2)}{f(2n)} \leq \alpha^{-n-1/2} .
\end{equation}
We wish to prove that $\abs{X}$ is stochastically dominated by $\abs{Y}$ where $Y=\mathcal{N}_{2\ZZ} \pr{0,4/\log\pr{\alpha}}$.
The laws of $\abs{X}$ and $\abs{Y}$ are respectively given by
\begin{equation*}
f_{\abs{X}}(2n)=\begin{cases}
			f(0), & n=0\\
            2f(2n), & n > 0
		 \end{cases}
\end{equation*}
and
\begin{equation*}
f_{\abs{Y}}(2n)\propto\begin{cases}
			1, & n=0\\
            2\alpha^{-n^2/2}, & n > 0
		 \end{cases}.
\end{equation*}
By \eqref{ineq:localization_stoch_dom_helper_ineqs}, for $n>0$ we have
\begin{equation*}
\frac{f_{\abs{X}}\pr{2n+2}}{f_{\abs{X}}\pr{2n}} \leq \alpha^{-n-1/2} = \frac{f_{\abs{Y}}\pr{2n+2}}{f_{\abs{Y}}\pr{2n}} ,
\end{equation*}
while for $n=0$ we have
\begin{equation*}
\frac{f_{\abs{X}}\pr{2}}{f_{\abs{X}}\pr{0}} \leq 2\alpha^{-1/2} = \frac{f_{\abs{Y}}\pr{2}}{f_{\abs{Y}}\pr{0}}.
\end{equation*}
By \Cref{lem:likelyhood_dom_implies_stoch_dom}, $\abs{Y}$ stochastically dominates $\abs{X}$, which establishes \ref{item:strong_stoch_dom}.
It is known that the variance of an \emph{integer-valued} Gaussian around zero is bounded above by that of a corresponding \emph{continuous} Gaussian, that is, $\var\mathcal{N}_{\ZZ}(0,\sigma^2) \le \var\mathcal{N}(0,\sigma^2) = \sigma^2$ (see, e.g., \cite{CKS20}). By the stochastic dominance we have just established, we get
\begin{equation*}
\var{X} \leq \var{Y}=4\var\mathcal{N}_{\ZZ}\pr{0,\frac{1}{\log\pr{\alpha}}}
\leq \frac{4}{\log\pr{\alpha}},
\end{equation*}
which proves \ref{item:strong_variance_bound}. Finally, for \ref{item:strong_pointwise_bound} observe that  \eqref{ineq:localization_stoch_dom_helper_ineqs} gives
\begin{equation*}
\frac{f(2n)}{f(0)} = \frac{f(2n)}{f(2n-2)} \cdot \cdots \cdot \frac{f(2)}{f(0)}
\leq \alpha^{-(2n-1)/2} \cdots \alpha^{-1/2} = \alpha^{-n^2 / 2}
\end{equation*}
for $n>0$, and by symmetry the same is true for $n < 0$.
\end{proof}

\subsection{Proof of localization}\label{sec:hom-localization}
We are now ready to construct the complete inductive argument.
\begin{theorem} \label{thm:hom_root_law_is_strong_log_conc}
Let $(T,v^*)$ be an odd or even finite rooted tree. Then $f_{T,\mathrm{Hom}}$ is $e^{1/R}$-strongly log-concave for $R=\mathcal{R}_T$.
\end{theorem}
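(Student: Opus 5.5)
The plan is induction on the height of $(T,v^*)$, using \Cref{lem:hom_law_flow} and \Cref{lem:resistance_flow} to reduce the claim to a one-variable inequality. I adopt the convention $e^{1/0}=\infty$: a function is $\infty$-strongly log-concave if it has contiguous support and $f(n-2)f(n+2)=0$ for all $n$, i.e.\ it is a point mass.

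\textbf{Base case.} If $T$ is a single vertex, the root is a leaf. Then $f_{T,\mathrm{Hom}}=\1_{\cbr{0}}$ and $\Res_T=0$, so the claim holds with $\alpha=\infty$.

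\textbf{Convolution step with $\alpha=\infty$.} I will check that \Cref{prop:conv_strong_log_conc} remains valid in this case, with $\beta=4$. If $f$ is a point mass, then in the proof $f(n\pm3)=0$ whenever $f(n\pm1)>0$. Inequality \eqref{ineq:strong_log_conc_abt_condition} then reads $(1+t)^2\ge 4t$, which holds. Alternatively, one can compute $\1_{\cbr{-1,1}}$ directly: it is $4$-strongly log-concave.

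\textbf{Inductive step.} Let $T$ be nontrivial. For each child $v\in N(v^*)$, write $R_v=\Res_{T[v]}$ and $x_v=1/R_v\in(0,\infty]$. By the induction hypothesis, $f_{T[v],\mathrm{Hom}}$ is $e^{x_v}$-strongly log-concave. Then:
\begin{itemize}
\item By \Cref{prop:conv_strong_log_conc}, $f_{T[v],\mathrm{Hom}}*\1_{\cbr{-1,1}}$ is $\beta_v$-strongly log-concave with $\beta_v=4(1+e^{-x_v})^{-2}$.
\item By \Cref{prop:strong_log_conc_multiplicativity} and \Cref{lem:hom_law_flow}, $f_{T,\mathrm{Hom}}$ is $\prod_v\beta_v$-strongly log-concave. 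Normalization does not affect this property.
\item Strong log-concavity with parameter $\alpha$ implies it for every smaller parameter.
\item By \Cref{lem:resistance_flow}, $1/\Res_T=\sum_v \frac{x_v}{1+x_v}$.
\end{itemize}
It therefore suffices to prove, for each child separately,
\begin{equation*}
\frac{4}{(1+e^{-x})^2}\ \ge\ \exp\!\Big(\frac{x}{1+x}\Big)\qquad\text{for all }x\in(0,\infty].
\end{equation*}
At $x=\infty$ this reads $4\ge e$, which holds.

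\textbf{The elementary inequality.} This is the only real obstacle. Taking half-logarithms and using $\log 2-\log(1+e^{-x})=\tfrac{x}{2}-\log\cosh\tfrac{x}{2}$, the inequality becomes
\begin{equation*}
\log\cosh\frac{x}{2}\ \le\ \frac{x^2}{2(1+x)}.
\end{equation*}
I will split into two ranges.
\begin{itemize}
\item For $0<x\le 3$, use $\log\cosh y\le y^2/2$. This gives the bound $x^2/8$, and $x^2/8\le \frac{x^2}{2(1+x)}$ holds exactly when $1+x\le 4$.
\item For $x\ge3$, use $\log\cosh y=y-\log2+\log(1+e^{-2y})\le y-\log 2+e^{-2y}$. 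The inequality then reduces to $\frac{x}{2(1+x)}\le \log 2-e^{-x}$. This holds because the left side is less than $1/2$, while the right side is at least $\log 2-e^{-3}>0.64$.
\end{itemize}

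\textbf{Conclusion.} Multiplying the per-child bounds gives $\prod_v\beta_v\ge \exp\big(\sum_v \tfrac{x_v}{1+x_v}\big)=e^{1/\Res_T}$. Contiguity of support is preserved throughout by the two propositions, which completes the induction.
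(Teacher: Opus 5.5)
Your proof is correct and follows the paper's argument. It uses the same induction on depth, and \Cref{lem:hom_law_flow}, \Cref{prop:conv_strong_log_conc}, \Cref{prop:strong_log_conc_multiplicativity} and \Cref{lem:resistance_flow} reduce everything to the same per-child inequality $4(1+e^{-x})^{-2}\ge e^{x/(1+x)}$. The only difference is how that scalar inequality is proved: the paper avoids your $\log\cosh$ case split by applying $e^{-y}\le \frac{1}{1+y}$ twice, which gives $\frac{2}{1+e^{-x}}\ge\frac{2(1+x)}{2+x}=\bigl(1-\frac{x}{2(1+x)}\bigr)^{-1}\ge e^{x/(2(1+x))}$ uniformly in $x$.
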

\begin{proof}
We induct on the depth of the tree. For the trivial tree $T$ of depth zero, $f_{T,\mathrm{Hom}}=\1_{\cbr{0}}$ is $\infty$-strongly log-concave (that is, $\alpha$-strongly log-concave for all $\alpha>1$).

Suppose that $T$ is a nontrivial tree. Applying the induction hypothesis to $T[v]$ for $v\in N\pr{v^*}$, we obtain that $f_{T[v],\mathrm{Hom}}$ is $e^{1/R_v}$-strongly log-concave where $R_v=\mathcal{R}_{T[v]}$.
Recall that by \Cref{lem:hom_law_flow} we have the recursive relation
\begin{equation*}
f_{T,\mathrm{Hom}} \propto \prod_{v\in N\pr{v^*}} \pr{f_{T[v],\mathrm{Hom}} * \1_{\cbr{-1,1}}} .
\end{equation*}
By \Cref{prop:strong_log_conc_multiplicativity,prop:conv_strong_log_conc}, $f_{T,\mathrm{Hom}}$ is $\alpha$-strongly log-concave, where
\begin{equation*}
\alpha
= \prod_{v\in N\pr{v^*}} \frac{4}{\pr{e^{-1/R_v}+1}^2} .
\end{equation*}
Applying the bound $e^{-x}\leq \frac{1}{1+x}$ for $x>-1$ twice yields
\begin{equation*}
\frac{2}{e^{-1/R_v}+1}
\geq \frac{2}{\frac{R_v}{R_v+1}+1} =
\frac{1}{1-\frac{1}{2R_v+2}} \geq e^{1/(2R_v+2)} . 
\end{equation*}
Taking a product over $v$ and squaring both sides, we get
\begin{equation*}
\alpha \geq \prod_{v\in N\pr{v^*}} e^{1/\pr{R_v+1}} = e^{1/R},
\end{equation*}
by \Cref{lem:resistance_flow}, completing the induction step.
\end{proof}

Combining \Cref{prop:bounds_on_strong_log_conc_laws} with \Cref{thm:hom_root_law_is_strong_log_conc}, we immediately obtain bounds on the height at the root.

\begin{corollary} \label{cor:upper_bounds_on_hom_root_laws}
Let $(T,v^*)$ be an even finite rooted tree, and let $\phi:V(T)\to\ZZ$ be a uniformly random homomorphism satisfying $\phi\equiv0$ on the leaves of $T$. With $R=\mathcal{R}_T$, the following hold:
\begin{enumerate}[(a)]
    \item $\abs{\phi\pr{v^*}}$ is stochastically dominated by $\abs{\mathcal{N}_{2\ZZ} \pr{0,4R}}$; 
    \item $\var\pr{\phi\pr{v^*}} \leq 4R$;
    \item $\prob{\phi\pr{v^*} = 2n}\leq e^{-n^2/2R}\cdot\prob{\phi\pr{v^*} = 0}$ for all $n\in\ZZ$.
\end{enumerate}
\end{corollary}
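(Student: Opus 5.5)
The plan is to apply \Cref{prop:bounds_on_strong_log_conc_laws} to $X=\phi\pr{v^*}$ with $\alpha=e^{1/R}$, so that $\log\alpha=1/R$ and $4/\log\alpha=4R$. Two hypotheses of that proposition must first be verified.

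The first is that $X$ is $2\ZZ$-valued. This holds because $T$ is even: every leaf lies at even distance from the root, and heights change by $\pm1$ along each edge, so $\phi\pr{v^*}$ has the parity of that distance.

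The second is that the law of $X$ is symmetric around zero. The map $\phi\mapsto-\phi$ is a bijection on homomorphisms vanishing on the leaves, and it preserves the uniform measure. Hence $f_{T,\mathrm{Hom}}(n)=f_{T,\mathrm{Hom}}(-n)$. (Alternatively, one can argue inductively via \Cref{lem:hom_law_flow}, since $\1_{\cbr{-1,1}}$ is symmetric.)

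Strong log-concavity with parameter $e^{1/R}$, where $R=\mathcal{R}_T$, is exactly \Cref{thm:hom_root_law_is_strong_log_conc}. If $T$ is the trivial tree, then $R=0$ and $\phi\pr{v^*}\equiv0$, so all three claims hold trivially (reading $\mathcal{N}_{2\ZZ}(0,0)$ as the point mass at $0$). We may therefore assume $R>0$ and $\alpha=e^{1/R}>1$.

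With these hypotheses in place, each part of the corollary follows from the corresponding part of \Cref{prop:bounds_on_strong_log_conc_laws}. Part (a) gives that $\abs{\phi\pr{v^*}}$ is stochastically dominated by $\abs{\mathcal{N}_{2\ZZ}(0,4R)}$. Part (b) gives $\var\pr{\phi\pr{v^*}}\le 4R$. Part (c) gives
\[
\prob{\phi\pr{v^*}=2n}\le \alpha^{-n^2/2}\,\prob{\phi\pr{v^*}=0}=e^{-n^2/2R}\,\prob{\phi\pr{v^*}=0}.
\]

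There is no genuine obstacle here. The only points needing care are the symmetry and parity checks and the degenerate case $R=0$, all of which are immediate.
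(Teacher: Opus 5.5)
Your proposal is correct and matches the paper's argument, which obtains the corollary by substituting $\alpha=e^{1/R}$ from \Cref{thm:hom_root_law_is_strong_log_conc} into \Cref{prop:bounds_on_strong_log_conc_laws}. Your explicit checks of parity, of symmetry via $\phi\mapsto-\phi$, and of the degenerate case $R=0$ supply details the paper leaves implicit.
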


\begin{theorem}\label{thm:hom_localization}
Let $T$ be a transient locally finite tree. Then homomorphisms on $T$ are localized.
\end{theorem}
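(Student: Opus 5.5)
Fix a root $v^*$ of the transient locally finite tree $T$ and let $\mathcal{R}_T<\infty$ denote the resistance from $v^*$ to infinity. Localization means tightness of the law of $\phi(v^*)$ under zero boundary conditions placed on boundaries receding to infinity. The natural boundaries are finite subtrees: for $n\ge 1$, let $T_n$ be the subtree induced by vertices at distance at most $2n$ from $v^*$, rooted at $v^*$, with the heights at depth $2n$ fixed to zero. We also include vertices of degree one at smaller depth; these are genuine leaves of $T$, and the conditioning treats them the same way. If one wants general finite sets $V_n$ exhausting $T$, with $\phi$ set to zero outside $V_n$, the same argument applies to the finite tree obtained by keeping $V_n$ together with its outer vertex boundary and declaring the boundary vertices to be leaves. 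In either case the resulting finite rooted tree $T'$ has its leaves exactly at the vertices where $\phi$ is pinned to zero. Parity is not an issue: the tree is either even or odd, and \Cref{thm:hom_root_law_is_strong_log_conc} covers both.

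The key step is to bound $\mathcal{R}_{T'}$ by $\mathcal{R}_T$. I would use Rayleigh monotonicity. Shorting all the boundary (leaf) vertices of $T'$ together is the same as shorting everything beyond them in $T$. Shorting can only decrease resistance, so the resistance from $v^*$ to the leaves of $T'$ is at most the resistance from $v^*$ to infinity in $T$, giving $\mathcal{R}_{T'}\le \mathcal{R}_T$. Equivalently, a unit flow from $v^*$ to infinity restricts to a unit flow from $v^*$ to the leaves of $T'$ with no larger energy, and Thomson's principle gives the same inequality.

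Next I would apply \Cref{thm:hom_root_law_is_strong_log_conc}: the law of $\phi(v^*)$ on $T'$ is $e^{1/\mathcal{R}_{T'}}$-strongly log-concave, hence $e^{1/\mathcal{R}_T}$-strongly log-concave. The law is symmetric, by the sign-flip symmetry $\phi\mapsto-\phi$, which preserves zero boundary values. In the even case \Cref{cor:upper_bounds_on_hom_root_laws} then gives the uniform bound $\mathbb{P}(|\phi(v^*)|\ge 2k)\le \mathbb{P}(|\mathcal{N}_{2\ZZ}(0,4\mathcal{R}_T)|\ge 2k)$. This bound does not depend on $n$, so the laws are tight; the variance is also bounded by $4\mathcal{R}_T$.

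The odd case needs separate handling, because \Cref{prop:bounds_on_strong_log_conc_laws} is stated only for $2\ZZ$-valued laws. I would either redo the successive-ratio argument starting from $f(1)=f(-1)$, getting $f(2n+1)/f(2n-1)\le\alpha^{-n}$ and therefore a subgaussian tail, or avoid it with a Chebyshev-type argument. Strong log-concavity together with symmetry already forces geometric decay of the ratios away from the center, uniformly in $n$. This parity bookkeeping is the only step I expect to be mildly tedious; the resistance comparison is the substantive one, and it is standard.
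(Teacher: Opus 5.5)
Your argument is the paper's in its main line: strong log-concavity of the root law from \Cref{thm:hom_root_law_is_strong_log_conc}, the comparison $\mathcal{R}_{T'}\le\mathcal{R}_T$, and \Cref{cor:upper_bounds_on_hom_root_laws} for even trees. The paper asserts $\mathcal{R}_{T'}\le\mathcal{R}_T$ without comment; your justification via Rayleigh monotonicity or Thomson's principle is correct.

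The real divergence is the odd case. The paper does not redo any bounds there. It attaches a new root $v^{**}$ whose only child is $v^*$.
\begin{itemize}
\item By \Cref{lem:hom_law_flow}, $\phi(v^{**})$ is distributed as $\phi(v^*)$ plus an independent uniform sign, so the variance increases by exactly $1$.
\item By \Cref{lem:resistance_flow}, the resistance also increases by exactly $1$.
\end{itemize}
The even-case corollary then gives $\var\pr{\phi_{T'}(v^*)}\le 4(\mathcal{R}_{T'}+1)-1\le 4\mathcal{R}_T+3$ verbatim. Your first option also works: from $f(1)=f(-1)$ you get $f(2j+1)/f(2j-1)\le\alpha^{-j}$, hence $f(2j+1)\le\alpha^{-j(j+1)/2}f(1)$. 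This gives a subgaussian tail rather than only a variance bound, at the cost of repeating the bookkeeping. The vaguer ``Chebyshev-type'' alternative is not needed.

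One point in your setup needs fixing. Your first paragraph suggests that the degree-one vertices of $T$ at depth less than $2n$ are pinned to zero along with depth $2n$. In that case $T_n$ need not be even or odd. A degree-one vertex at odd depth together with zeros at depth $2n$ admits no homomorphism at all, so ``the tree is either even or odd'' is false as stated.

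The fix is that these vertices are not boundary vertices and should be left free. A free pendant vertex contributes a factor $2$ regardless of its neighbor's height. So you may prune such vertices without changing the law of $\phi(v^*)$, and iterate to remove every finite branch containing no pinned vertex. The pruned tree has its leaves exactly at depth $2n$, so it is even, and it is nontrivial since $T$ is infinite. Your resistance comparison applies to it unchanged.

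The same pruning is needed in your general-$V_n$ variant. There a degree-one vertex of $T$ inside $V_n$ is a leaf of $T'$ without being pinned, contrary to your claim that the leaves are exactly the pinned vertices. The paper sidesteps all this by quantifying directly over finite odd or even subtrees $T'$ with zero at their leaves.
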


\begin{proof}
Choose any vertex $v^*\in T$ as the root.
For a (finite) odd or even rooted subtree $\pr{T',v^*}\subset \pr{T,v^*}$, let $\phi_{T'}:T'\to\ZZ$ be a uniformly random homomorphism taking the value zero at the leaves of $T'$. We prove that the family of laws of $\phi_{T'}\pr{v^*}$ has a uniformly bounded variance, and is therefore tight.
\Cref{cor:upper_bounds_on_hom_root_laws} implies that when $T'$ is an even tree,
\begin{equation*}
\var\pr{\phi_{T'}\pr{v^*}} \leq 4\mathcal{R}_{T'} \leq 4\mathcal{R}_{T}.
\end{equation*}
If $T'$ is odd, consider the even rooted tree $\pr{T'',v^{**}}$ obtained from $T'$ by adding a new root $v^{**}$ whose only child is $v^*$. For $T''$, \Cref{cor:upper_bounds_on_hom_root_laws} gives
\begin{equation*}
\var\pr{\phi_{T''}\pr{v^{**}}} \leq 4\mathcal{R}_{T''}.
\end{equation*}
However, by \Cref{lem:hom_law_flow} we have $\var\pr{\phi_{T''}\pr{v^{**}}}=\var\pr{\phi_{T'}\pr{v^*}}+1$,
and by \Cref{lem:resistance_flow} we have $\mathcal{R}_{T''}=\mathcal{R}_{T'}+1$.
Combining these, we get
\begin{equation*}
\var\pr{\phi_{T'}\pr{v^*}} \leq 4\pr{\mathcal{R}_{T'}+1}-1
\leq 4\mathcal{R}_T + 3.
\end{equation*}
We thus obtain a uniform upper bound on the variance of $\phi_{T'}\pr{v^*}$ as desired.
\end{proof}

\begin{remark}[localization under general boundary conditions] \label{remark:hom_loc_different_boundary_conds}
The localization result on transient trees extends beyond zero boundary conditions to general strong log-concave boundary conditions. Suppose that the leaf heights follow (a-priori) an $\alpha_0$-strongly log-concave law $f_0:\ZZ\to[0,1]$. Adapting the recursive argument of \Cref{thm:hom_root_law_is_strong_log_conc}, the root law in this case is $\alpha$-strongly log-concave with $\alpha$ determined by the effective resistance in an augmented electrical network (with leaf-adjacent resistances increased by $1/\log\alpha_0$). Because this effective resistance is uniformly bounded across subtrees of a transient tree, the root laws possess uniformly bounded variance. Provided the means are controlled (e.g. if $f_0$ is symmetric), localization follows. This argument also allows the boundary law $f_0$ to vary across the leaves. In particular, when the boundary values are arbitrary fixed values (which may be vary across the leaves), the variance of the root height is uniformly bounded.
\end{remark}

%% file: delocalized.tex
\section{Delocalization of homomorphisms on recurrent trees} \label{sec:delocalization}

In this section we prove the delocalization part of \Cref{thm:equivalence_hom} (\Cref{thm:hom_deloc_on_rec_tree} below) and the lower bounds in \Cref{thm:main_result_hom} (\Cref{cor:lower_bounds_on_hom_root_laws} below), thereby completing the proofs of \Cref{thm:main_result_hom,thm:equivalence_hom}.
Our goal is to acquire lower distribution bounds on $f_{T,\mathrm{Hom}}$ by establishing lower bounds on its log-curvature. The recursive proof structure mirrors the one in the previous section, with the role of strong log-concavity being replaced by the new notion of limited log-concavity.

\subsection{Limited log-concavity}

\begin{definition} \label{defn:hom_limited_log_concave}
Given $\gamma,R>0$, let $\Phi^\gamma_R:(-R,R)\to\RR_+$ be defined by
$\Phi^\gamma_R(x)\coloneqq e^{\gamma/\pr{R-\abs{x}}}$.
We say that $f:Z\to\RRnonn$, where $Z=2\ZZ$ or $Z=2\ZZ+1$, is \emph{$\pr{R,\gamma}$-limited log-concave}, if it is weakly log-concave and
\begin{equation*}
f(n)^2 \leq \Phi^\gamma_R(n)\cdot f(n-2)f(n+2)
\qquad\text{for all } n \in (-R,R)\cap Z.
\end{equation*}
\end{definition}

In contrast to strong log-concavity, which imposes a uniform upper bound on log-curvature, limited log-concavity imposes a lower bound within a restricted interval around zero, which varies across the interval (see \Cref{fig:phi_gamma_r}).

To see why such a restriction is necessary for our purposes, let $T$ be a finite rooted tree. Under the zero boundary condition, the height of the root is distributed within a finite interval $[-d,d]$. Consequently, the log-curvature of $f_{T,\mathrm{Hom}}$ at $x=\pm d$ is zero, so any positive global lower bound on said log-curvature would fail to hold. To help accommodate for this, the lower bound is designed to weaken as $\abs{n}$ increases, smoothly transitioning from a strong constraint near $n=0$ to no constraint at all at $n=\pm R$.

\input{phifig}

Following along the lines of the previous section, we now shift our focus to exploring the behavior of limited log-concavity under convolution with $\1_{\cbr{-1,1}}$ and under products. \Cref{prop:prod_limited_log_conc,prop:conv_limited_log_conc} show that these two operations preserve $\pr{R,\gamma}$-limited log-concavity in a manner consistent with the propagation of the effective resistance up the tree.

\begin{proposition} \label{prop:prod_limited_log_conc}
Let $f_1,f_2:Z\to\RRnonn$ such that $f_i$ is $\pr{R_i,\gamma}$-limited log-concave, where $Z=2\ZZ$ or $Z=2\ZZ+1$. Then $f_1\cdot f_2$ is $\pr{R,\gamma}$-limited log-concave where $\frac{1}{R}=\frac{1}{R_1}+\frac{1}{R_2}$.
\end{proposition}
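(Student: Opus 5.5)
Write $g\coloneqq f_1\cdot f_2$. Weak log-concavity of $g$ follows from \Cref{prop:weak_log_conc_closure}, or by rerunning the proof of \Cref{prop:strong_log_conc_multiplicativity} with $\alpha_1=\alpha_2=1$. The support of $g$ is the intersection of two contiguous supports, so it is contiguous. It remains to verify the upper bound on $g(n)^2$ for $n\in(-R,R)\cap Z$.

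Since $\frac1R=\frac1{R_1}+\frac1{R_2}$, we have $R<\min(R_1,R_2)$. Hence $|n|<R$ implies $|n|<R_i$ for both $i$, and both hypotheses apply at $n$. Multiplying them gives
\begin{equation*}
g(n)^2 \le \Phi^\gamma_{R_1}(n)\Phi^\gamma_{R_2}(n)\, g(n-2)g(n+2).
\end{equation*}
(The right-hand side uses no division, so this holds even where $g$ vanishes.) It therefore suffices to show
\begin{equation*}
\frac{\gamma}{R_1-|x|}+\frac{\gamma}{R_2-|x|}\le \frac{\gamma}{R-|x|}\qquad\text{for } 0\le |x|<R .
\end{equation*}

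This scalar inequality is the only real step. Put $s=|x|$ and $h(s)=\frac1{R_1-s}+\frac1{R_2-s}-\frac1{R-s}$. At $s=0$ we get $h(0)=0$ by the definition of $R$, so the task is to show that $h$ does not increase on $[0,R)$. One could differentiate and compare $\sum (R_i-s)^{-2}$ with $(R-s)^{-2}$, but a cleaner route is to note that $1/(R_i-s)\le \frac{R}{R_i}\cdot\frac1{R-s}$. This is equivalent to $R_i(R-s)\le R(R_i-s)$, i.e.\ $-R_is\le -Rs$, which holds because $R\le R_i$ and $s\ge0$. Summing over $i=1,2$ and using $R/R_1+R/R_2=1$ gives $\sum_i 1/(R_i-s)\le 1/(R-s)$.

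Multiplying by $\gamma>0$ and exponentiating yields $\Phi^\gamma_{R_1}(n)\Phi^\gamma_{R_2}(n)\le\Phi^\gamma_R(n)$, which finishes the proof. I don't expect any genuine obstacle. The only points to watch are the domain issue ($|n|<R$ implies $|n|<R_i$) and the convexity-type comparison above. If one $f_i$ is nonzero only at a single point, nothing special happens, because the bound is multiplicative and holds trivially at zeros.
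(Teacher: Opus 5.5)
Your proof is correct and follows the paper's argument step for step: weak log-concavity via closure, then $|n|<R\le R_i$ so both hypotheses apply, then multiply, then reduce to $\Phi^\gamma_{R_1}(n)\Phi^\gamma_{R_2}(n)\le\Phi^\gamma_R(n)$. The only difference is in the final scalar inequality: the paper clears denominators and finds the numerator $|n|(2R-|n|)\ge 0$, whereas you use the termwise bound $\frac{1}{R_i-s}\le\frac{R}{R_i}\cdot\frac{1}{R-s}$ with weights $R/R_i$ summing to $1$, which reaches the same conclusion and extends verbatim to products of more than two factors.
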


\begin{proposition} \label{prop:conv_limited_log_conc}
Let $f:Z\to\RRnonn$ be symmetric around 0 and $\pr{R,\gamma}$-limited log-concave with $\gamma\geq8$, where $Z=2\ZZ$ or $Z=2\ZZ+1$. Then $f*\1_{\cbr{-1,1}}$ is $\pr{R+1,\gamma}$-limited log-concave.
\end{proposition}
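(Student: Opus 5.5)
The plan is to reduce the claim to a two-variable inequality, in the same way as the proof of \Cref{prop:conv_strong_log_conc}, but with lower bounds on the outer values in place of upper bounds. Write $g\coloneqq f*\1_{\cbr{-1,1}}$. Weak log-concavity of $g$ is immediate from \Cref{prop:weak_log_conc_closure}, so it remains to show
\begin{equation*}
g(n)^2\le \Phi^\gamma_{R+1}(n)\, g(n-2)g(n+2)\qquad\text{for } n\in(-R-1,R+1)\cap(Z+1).
\end{equation*}
Since $g$ inherits the symmetry of $f$, I will assume $n\ge0$. Set $a=f(n-1)$, $b=f(n+1)$, $c=f(n-3)$, $d=f(n+3)$ and $r\coloneqq R-n\in(-1,R]$. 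The target then reads $(a+b)^2\le e^{\gamma/(r+1)}(c+a)(b+d)$.

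I will use two elementary facts. First, a symmetric log-concave $f$ is non-increasing on the nonnegative part of $Z$. Since $|n-1|\le n+1$, this gives $b\le a$, so $t\coloneqq b/a\in[0,1]$. Second, I will handle degenerate supports separately. If $a=0$ then $b=0$ and there is nothing to prove. If $b=0<a$, the inequality becomes $a^2\le e^{\gamma/(r+1)}(c+a)d'$ with $d'=0$; in that case $n-1$ is the top of the support of $f$, and I need to check that this forces $r+1\le 0$, using that $f(n-1)^2\le\Phi^\gamma_R(n-1)f(n-3)f(n+1)=0$ is impossible inside $(-R,R)$. So $n-1\ge R$, and then $n$ lies outside the required range.

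For the main case $a,b>0$, limited log-concavity of $f$ at $n-1$ (where $|n-1|<R$ holds since $n<R+1$) gives $c\ge a^2/(\varphi_1 b)$ with $\varphi_1=\Phi^\gamma_R(n-1)$. This equals $e^{\gamma/(r+1)}$ when $n\ge1$; the case $n=0$ occurs only for odd $Z$ and is handled by symmetry, since then $a=b$. If $n+1<R$, i.e.\ $r>1$, we also get $d\ge b^2/(\varphi_2 a)$ with $\varphi_2=e^{\gamma/(r-1)}$; otherwise I only use $d\ge0$. Dividing by $a^2$, it suffices to prove the following.
\begin{itemize}
\item[(i)] For $r>1$ and $t\in(0,1]$:
\begin{equation*}
(1+t)^2\le e^{\gamma/(r+1)}\bigl(t+e^{-\gamma/(r+1)}\bigr)\bigl(1+te^{-\gamma/(r-1)}\bigr).
\end{equation*}
\item[(ii)] For $r\in(-1,1]$:
\begin{equation*}
(1+t)^2\le e^{\gamma/(r+1)}t+1.
\end{equation*}
\end{itemize}
Case (ii) is easy. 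The right side minus the left side is $t\bigl(e^{\gamma/(r+1)}-2-t\bigr)\ge t(e^{\gamma/2}-3)\ge0$, because $r+1\le2$ and $\gamma\ge8$.

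Case (i) is the heart of the argument and the step I expect to be the main obstacle. Expanding, the inequality becomes
\begin{equation*}
2t+t^2\le e^{x}t+e^{-y}t+t^2e^{x-y},\qquad x=\tfrac{\gamma}{r+1},\quad y=\tfrac{\gamma}{r-1}>x.
\end{equation*}
Dividing by $t$, it suffices to show
\begin{equation*}
2-e^{x}-e^{-y}\le t\bigl(e^{x-y}-1\bigr).
\end{equation*}
The right side is negative and decreasing in $t$, so the worst case is $t=1$, where the condition becomes $(1+e^{-y})(1+e^{x})\ge4$, i.e.
\begin{equation*}
\log\frac{1+e^{x}}{2}+\log\frac{1+e^{-y}}{2}\ge0.
\end{equation*}
Heuristically, for large $r$, with $\delta=y-x\approx 2\gamma/r^2$, the left side is about $\tfrac{x-y}{2}+\tfrac{x^2+y^2}{8}\approx-\gamma/r^2+\gamma^2/(4r^2)$, which is nonnegative once $\gamma\ge4$. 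To make this rigorous uniformly in $r>1$, I would use the function $h(s)=\log\frac{1+e^{s}}{2}$, which is convex, even up to the linear term, and satisfies $h(s)=\tfrac{s}{2}+\log\cosh\tfrac{s}{2}$. The condition becomes
\begin{equation*}
\tfrac{x-y}{2}+\log\cosh\tfrac{x}{2}+\log\cosh\tfrac{y}{2}\ge0.
\end{equation*}
This follows from $\log\cosh u\ge \min(u^2/2-u^4/12,\ u-\log 2)$ together with $y-x=2\gamma/(r^2-1)$ and $x^2+y^2\ge 2\gamma^2/r^2\cdot\tfrac{r^2}{r^2-1}$, splitting into the ranges $r$ large (quadratic regime) and $r$ close to $1$ (where $y$ is huge and $\log\cosh(y/2)\approx y/2-\log2$ dominates). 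The constant $\gamma\ge8$ should give enough slack to absorb the quartic correction and the crossover between the two regimes. Pinning down these constants cleanly is where I expect the real work to lie.
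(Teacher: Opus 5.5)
Your reduction for $n\ge1$ is the paper's argument. You bound $f(n\mp3)$ from below using limited log-concavity at $n\mp1$, push to $t=1$ by unimodality, and arrive at $(1+e^{x})(1+e^{-y})\ge4$, which is exactly \Cref{claim:phi_middle_bound} for $n>0$. Your case (ii) is the paper's Case~2.

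The genuine gap is $n=0$, which you dismiss as ``handled by symmetry.'' This case is not marginal. It arises in every convolution step with $Z=2\ZZ+1$, i.e.\ every step that produces the root law of an even tree. It is also not covered by (i). At $n=0$ the only lower bound on $c=f(-3)=f(3)$ comes from limited log-concavity at $\pm1$, where $\Phi^\gamma_R(\pm1)=e^{\gamma/(r-1)}=e^{y}$ rather than $e^{x}$. So with $a=b$ and $c\ge ae^{-y}$, the target $4a^2\le e^{x}(a+c)^2$ reduces to
\begin{equation*}
e^{x}\bigl(1+e^{-y}\bigr)^2\ge4,\qquad\text{i.e.}\qquad 1+e^{-\gamma/(r-1)}\ge 2e^{-\gamma/(2(r+1))}.
\end{equation*}
Since $y>x$, we have $e^{x}(1+e^{-y})^2=(e^{x}+e^{x-y})(1+e^{-y})<(1+e^{x})(1+e^{-y})$. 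So this is strictly stronger than (i) at $t=1$ and does not follow from it. It is also exactly where $\gamma\ge8$ is needed. The same expansion as in your heuristic gives $\tfrac{x-y}{2}+\tfrac{y^2}{8}\approx\gamma(\gamma-8)/(8r^2)$, whereas you correctly found that (i) needs only $\gamma\ge4$ asymptotically. The ``slack'' you plan to spend in (i) is therefore the entire margin of the binding case. The subcase $r\le1$ at $n=0$ is trivial, since $e^{x}\ge e^{4}>4$.

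Separately, the inequality you call the heart of the argument is only sketched; the constants are left open. The paper closes both issues at once. It proves the stronger $n=0$ inequality, $F(r)\coloneqq1+e^{-\gamma/(r-1)}-2e^{-\gamma/(2(r+1))}\ge0$ for $r>1$, and deduces the $n\ge1$ case from it via the comparison above. Since $F(r)\to0$ as $r\to\infty$, it suffices to show $F'\le0$. After rearranging and taking a square root, this is $\exp\bigl(\tfrac{\gamma}{2(r-1)}-\tfrac{\gamma}{4(r+1)}\bigr)\ge\tfrac{r+1}{r-1}$, which follows from $e^{z}\ge1+z$, $\tfrac{1}{r+1}\le\tfrac{1}{r-1}$ and $\gamma/4\ge2$.

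Your $\log\cosh$ route can also be completed, but only if you aim it at this inequality. With $u=x/2$ and $v=y/2$ it reads $\log\cosh v\ge v-u$. Since $1/u-1/v=4/\gamma\le1/2$, you get $v-u\le v^2/(2+v)$. This is dominated by $v^2/2-v^4/12$ for $v\le\sqrt7-1$, and by $v-\log2$ for $v\ge1.07$, and these ranges overlap.
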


Before proving these two key properties, we make note of the useful fact that log-concave functions are unimodal.

\begin{proposition} \label{prop:unimodal}
Let $f:Z\to\RRnonn$ be weakly log-concave and symmetric around zero, where $Z=2\ZZ$ or $Z=2\ZZ+1$. 
Then $f(n)$ is decreasing in $|n|$.
\end{proposition}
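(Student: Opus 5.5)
The claim is that a weakly log-concave $f$ on $Z$, symmetric about zero, is non-increasing in $|n|$. The plan is to combine the contiguity of the support with the monotonicity of successive ratios that log-concavity gives on that support, and then use symmetry to pin down where the maximum sits. It suffices to treat $n\ge 0$; the case $n\le 0$ follows from $f(-n)=f(n)$.

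First, by definition of weak log-concavity the support of $f$ is a contiguous set $[a,b]\cap Z$. Since $f$ is symmetric, the support is symmetric, so $a=-b$. It contains the smallest nonnegative element of $Z$ (namely $0$ if $Z=2\ZZ$, and $\pm1$ if $Z=2\ZZ+1$) unless $f\equiv 0$, which is trivial. Outside the support $f$ vanishes, so monotonicity there is automatic. At the boundary we only need $f(b)\ge f(b+2)=0$, which holds.

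On the support, $f>0$, and weak log-concavity $f(n)^2\ge f(n-2)f(n+2)$ rewrites as
\begin{equation*}
\frac{f(n+2)}{f(n)} \le \frac{f(n)}{f(n-2)}
\end{equation*}
whenever $n-2,n,n+2$ lie in the support. Hence the ratios $r(n)\coloneqq f(n+2)/f(n)$ are non-increasing in $n$ along the support (and $r$ becomes $0$ once $n+2$ leaves it). So it suffices to show that the first ratio taken at the ``center'' is at most $1$.

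The base case is handled separately for the two parities.
\begin{itemize}
\item \emph{Even case ($Z=2\ZZ$).} Apply log-concavity at $n=0$ together with symmetry: $f(0)^2\ge f(-2)f(2)=f(2)^2$, so $r(0)\le 1$.
\item \emph{Odd case ($Z=2\ZZ+1$).} Symmetry gives $f(1)=f(-1)$, so $r(-1)=1$. Monotonicity of the ratios then gives $r(1)\le r(-1)=1$.
\end{itemize}
In either case $r(n)\le1$ for all $n\ge0$ (with $n\ge1$ in the odd case), so $f(n+2)\le f(n)$ for every nonnegative $n$, which is the claim.

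The only mildly delicate point is the bookkeeping at the edge of the support, where ratios are undefined. This is handled by noting that contiguity makes $f$ vanish beyond $b$, and that the ratio chain is only ever used inside the support.
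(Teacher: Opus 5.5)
Your proposal is correct and follows essentially the same route as the paper. Both arguments chain the successive ratios $f(n+2)/f(n)$ via weak log-concavity and anchor the chain at the center using symmetry ($f(0)\ge f(2)$ in the even case, $f(1)/f(-1)=1$ in the odd case), then use contiguity of the support and symmetry to handle the tail and negative $n$.
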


\begin{proof}
We first consider the case $Z=2\ZZ$. Log-concavity gives $f\pr{0}^2\geq f(-2)f(2) = f\pr{2}^2$ so that $f(0)\geq f(2)$. Thus for all even $k\geq2$ in the support of $f$ we have
\begin{equation*}
\frac{f(k+2)}{f(k)} \leq \frac{f(k)}{f(k-2)} \leq \cdots \leq \frac{f(2)}{f(0)} \leq 1.
\end{equation*}
Since $f$ has contiguous support, $f(n)$ is in fact decreasing for all $n\geq0$, and the rest follows by symmetry. Now consider the case $Z=2\ZZ+1$. For odd $k\geq 3$ in the support of $f$, we have
\begin{equation*}
\frac{f(k+2)}{f(k)} \leq \frac{f(k)}{f(k-2)} \leq \cdots \leq \frac{f(1)}{f(-1)} = 1
\end{equation*}
which is again sufficient by contiguous support and symmetry.
\end{proof}

\begin{proof}[Proof of \Cref{prop:prod_limited_log_conc}]
Denote $g\coloneqq f_1\cdot f_2$. Note that $g$ is weakly log-concave by \Cref{prop:weak_log_conc_closure}.
Let $n\in(-R,R)\cap Z$ be an integer. Since $R\leq\min(R_1,R_2)$, we also have $n\in(-R_1,R_1)$, $n\in(-R_2,R_2)$. Therefore,
\begin{multline*}
g(n)^2 = f_1(n)^2\cdot f_2(n)^2 
\leq \Phi^\gamma_{R_1}(n)\cdot \Phi^\gamma_{R_2}(n)
\cdot f_1 (n-2) f_1 (n+2) \cdot f_2 (n-2) f_2 (n+2) \\
= \Phi^\gamma_{R_1}(n)\cdot \Phi^\gamma_{R_2}(n) \cdot g(n-2) \cdot g(n+2) ,
\end{multline*}
so it suffices to prove that $\Phi^\gamma_{R_1}(n)\cdot\Phi^\gamma_{R_2}(n)\leq\Phi^\gamma_R(n)$. Equivalently,
\begin{equation*}
\frac{\gamma}{R-|n|} -\frac{\gamma}{R_1-|n|} - \frac{\gamma}{R_2-|n|} \geq 0 .
\end{equation*}
Dividing by $\gamma$ and expanding, we get the condition
\begin{equation*}
\frac{R_1 R_2 - R_1R - R_2R + 2R|n| - |n|^2}{(R_1-|n|)(R_2-|n|)(R-|n|)} \geq 0 .
\end{equation*}
Note that since $\frac{1}{R}=\frac{1}{R_1}+\frac{1}{R_2}$, we have $R_1 R_2 = R_1R + R_2R$. Thus the above simplifies to
\begin{equation*}
\frac{2R|n| - |n|^2}{(R_1-|n|)(R_2-|n|)(R-|n|)} = \frac{|n|(2R-|n|)}{(R_1-|n|)(R_2-|n|)(R-|n|)} \geq 0,
\end{equation*}
which holds as $|n|<R,R_1,R_2$.
\end{proof}

\begin{proof}[Proof of \Cref{prop:conv_limited_log_conc}]
Denote $g\coloneqq f*\1_{\cbr{-1,1}}$. Observe that $g$ is weakly log-concave by \Cref{prop:weak_log_conc_closure}. It remains to prove that for all $n\in(-R-1,R+1)\cap\pr{Z+1}$, we have
\begin{equation} \label{ineq:conv_limited_log_conc_pre_expand}
g(n)^2\leq \Phi^\gamma_{R+1}(n) \cdot g(n-2)g(n+2) .
\end{equation}
By symmetry, we may assume that $n \ge 0$. Expanding, \eqref{ineq:conv_limited_log_conc_pre_expand} is equivalent to
\begin{equation} \label{ineq:conv_limited_log_conc_post_expand}
\pr{f(n-1)+f(n+1)}^2\leq \Phi^\gamma_{R+1}(n) \pr{f(n-3)+f(n-1)}\pr{f(n+1)+f(n+3)} .
\end{equation}
We prove \eqref{ineq:conv_limited_log_conc_post_expand} by dividing into three cases based on $(n,R)$. The first case is the main one, while the other two are easy boundary cases. Our strategy in each case is to use limited log-concavity to obtain relations between the values of $f$ at different points, and use them to eliminate the $f(\cdot)$ terms. This leaves us with elementary inequalities on $R,\gamma$ and $n$ which we prove directly.

\paragraph{Case 1: $n+1<R$.} In this case, we have
\begin{equation} \label{ineq:n_plus_one_sub_conc}
f(n+1)^2 \leq \Phi^\gamma_{R}(n+1) f(n-1)f(n+3) .
\end{equation}
Also, since $\abs{n-1}\leq n+1<R$, we have
\begin{equation} \label{ineq:n_minus_one_sub_conc}
f(n-1)^2 \leq \Phi^\gamma_{R}(n-1) f(n-3)f(n+1).
\end{equation}
Multiplying \eqref{ineq:n_plus_one_sub_conc} and \eqref{ineq:n_minus_one_sub_conc} (and safely dividing by $f(n-1)f(n+1)$ here), we get
\begin{equation} \label{ineq:n_pm_one_sub_conc}
f(n-1)f(n+1) \leq \Phi^\gamma_{R}(n-1)\Phi^\gamma_{R}(n+1) f(n-3)f(n+3) .
\end{equation}
Combining \eqref{ineq:n_plus_one_sub_conc}, \eqref{ineq:n_minus_one_sub_conc} and \eqref{ineq:n_pm_one_sub_conc}, we obtain a lower bound on the right-hand side of \eqref{ineq:conv_limited_log_conc_post_expand}:
\begin{multline*}
\Phi^\gamma_{R+1}(n) \cdot \pr{f(n-3)+f(n-1)}\pr{f(n+1)+f(n+3)} = \\
=\Phi^\gamma_{R+1}(n) \cdot \pr{f(n-3)f(n+1) + f(n-1)f(n+1) + f(n-3)f(n+3)+f(n-1)f(n+3)} \\
\geq \Phi^\gamma_{R+1}(n) \cdot
\pr{\frac{f(n-1)^2}{\Phi^\gamma_{R}(n-1)} + f(n-1)f(n+1) +
\frac{f(n-1)f(n+1)}{\Phi^\gamma_{R}(n-1)\Phi^\gamma_{R}(n+1)}+\frac{f(n+1)^2}{\Phi^\gamma_{R}(n+1)}} .
\end{multline*}
Thus, in order to prove \eqref{ineq:conv_limited_log_conc_post_expand}, it suffices to show that
\begin{multline*}
\frac{f(n-1)^2}{\Phi^\gamma_{R}(n-1)} + f(n-1)f(n+1) +
\frac{f(n-1)f(n+1)}{\Phi^\gamma_{R}(n-1)\Phi^\gamma_{R}(n+1)}+\frac{f(n+1)^2}{\Phi^\gamma_{R}(n+1)} \geq \\
\geq
\frac{1}{\Phi^\gamma_{R+1}(n)} \pr{f(n-1)+f(n+1)}^2 ,
\end{multline*}
or equivalently,
\begin{multline}
\pr{\frac{1}{\Phi^\gamma_{R}(n-1)} - \frac{1}{\Phi^\gamma_{R+1}(n)}}f(n-1)^2 +
\pr{\frac{1}{\Phi^\gamma_{R}(n+1)} - \frac{1}{\Phi^\gamma_{R+1}(n)}}f(n+1)^2 + \\
+ \pr{1 + \frac{1}{\Phi^\gamma_{R}(n-1)\Phi^\gamma_{R}(n+1)} - \frac{2}{\Phi^\gamma_{R+1}(n)}}f(n-1)f(n+1) \geq 0 . \label{ineq:before_replacing_f_n_pm_1}
\end{multline}
We aim to substitute both $f(n-1)^2$ and $f(n+1)^2$ above by $f(n-1)f(n+1)$, enabling us to eliminate the terms containing $f$. If $n>0$, we have
\begin{equation}
\label{ineq:replace_f_n_minus_one}
\pr{\frac{1}{\Phi^\gamma_{R}(n-1)} - \frac{1}{\Phi^\gamma_{R+1}(n)}}f(n-1)^2 =
\pr{\frac{1}{\Phi^\gamma_{R}(n-1)} - \frac{1}{\Phi^\gamma_{R+1}(n)}}f(n-1)f(n+1)
\end{equation}
as both sides are zero since $\Phi^\gamma_{R}(n-1) = \Phi^\gamma_{R+1}(n)$, and
\begin{equation}
\label{ineq:replace_f_n_plus_one}
\pr{\frac{1}{\Phi^\gamma_{R}(n+1)} - \frac{1}{\Phi^\gamma_{R+1}(n)}}f(n+1)^2 \geq 
\pr{\frac{1}{\Phi^\gamma_{R}(n+1)} - \frac{1}{\Phi^\gamma_{R+1}(n)}}f(n-1)f(n+1)
\end{equation}
as $f(n+1) \geq f(n-1)$
and $\Phi^\gamma_{R}(n+1) > \Phi^\gamma_{R+1}(n)$.
In the case that $n=0$, \eqref{ineq:replace_f_n_minus_one} and \eqref{ineq:replace_f_n_plus_one} hold with equality as $f(-1)=f(1)$.
After making these substitutions into \eqref{ineq:before_replacing_f_n_pm_1}, it remains to prove that
\begin{multline*}
\pr{\frac{1}{\Phi^\gamma_{R}(n-1)} - \frac{1}{\Phi^\gamma_{R+1}(n)}}f(n-1)f(n+1) +
\pr{\frac{1}{\Phi^\gamma_{R}(n+1)} - \frac{1}{\Phi^\gamma_{R+1}(n)}}f(n-1)f(n+1) + \\
+ \pr{1 + \frac{1}{\Phi^\gamma_{R}(n-1)\Phi^\gamma_{R}(n+1)} - \frac{2}{\Phi^\gamma_{R+1}(n)}}f(n-1)f(n+1) \geq 0 .
\end{multline*}
Dividing by $f(n-1)f(n+1)$ and regrouping, we get the condition
\[ 1 +\frac{1}{\Phi^\gamma_{R}(n-1)}+\frac{1}{\Phi^\gamma_{R}(n+1)}+
\frac{1}{\Phi^\gamma_{R}(n-1)\Phi^\gamma_{R}(n+1)}- \frac{4}{\Phi^\gamma_{R+1}(n)} \geq 0 ,\]
or equivalently,
\[ \pr{1+\frac{1}{\Phi^\gamma_{R}(n-1)}}\pr{1+\frac{1}{\Phi^\gamma_{R}(n+1)}}
\geq \frac{4}{\Phi^\gamma_{R+1}(n)} ,\]
which we prove in \Cref{claim:phi_middle_bound} below.

\paragraph{Case 2: $n+1 \geq R$ and $n>0$.} Note that \eqref{ineq:n_minus_one_sub_conc} still holds, as $\abs{n-1}=n-1<R$ in this case. Applying it together with the trivial inequalities $f(n-1)f(n+3)\geq0$ and $f(n-3)f(n+3)\geq 0$ to the right-hand side of \eqref{ineq:conv_limited_log_conc_post_expand}, the inequality reduces to
\begin{equation*}
f(n-1)^2 + 2f(n-1)f(n+1) + f(n+1)^2 \leq \Phi^\gamma_{R+1}(n) \pr{\frac{f(n-1)^2}{\Phi^\gamma_{R}(n-1)} + f(n-1)f(n+1)} .
\end{equation*}
Since $\Phi^\gamma_{R+1}(n) = \Phi^\gamma_{R}(n-1)$, this is equivalent to
\begin{equation*}
f(n-1)^2 + 2f(n-1)f(n+1) + f(n+1)^2 \leq f(n-1)^2 + \Phi^\gamma_{R+1}(n) f(n-1)f(n+1),
\end{equation*}
or after simplifying,
\begin{equation} \label{ineq:case_2_simplified}
2f(n-1) + f(n+1) \leq \Phi^\gamma_{R+1}(n) f(n-1).
\end{equation}
Observe that $f(n+1) \leq f(n-1)$ holds by unimodality, and that
\begin{equation*}
\Phi^\gamma_{R+1}(n)=e^{\gamma/\pr{R+1-n}}\geq e^{8/2} > 3 .
\end{equation*}
The combination of these inequalities yields \eqref{ineq:case_2_simplified} immediately.

\paragraph{Case 3: $n+1 \geq R$ and $n=0$.} By symmetry around zero, \eqref{ineq:conv_limited_log_conc_post_expand} is equivalent in this case to
\begin{equation*}
\pr{f(1)+f(1)}^2 \leq \Phi^\gamma_{R+1}(0)\pr{f(3)+f(1)}\pr{f(1)+f(3)}
\end{equation*}
or
\begin{equation} \label{ineq:case_3_simplified}
4f(1)^2 \leq \Phi^\gamma_{R+1}(0)\pr{f(3)+f(1)}^2 .
\end{equation}
Observe that
\begin{equation*}
\Phi^\gamma_{R+1}(0) = e^{\gamma/\pr{R+1}}\geq e^{8/2} > 4 ,
\end{equation*}
which immediately yields \eqref{ineq:case_3_simplified}.

Having addressed all three cases, the argument is complete modulo \Cref{claim:phi_middle_bound} which we state and prove below.
\end{proof}

\begin{claim} \label{claim:phi_middle_bound}
Let $n\geq0$ be an integer, and let $R>n+1$, $\gamma\geq 8$. Then
\begin{equation} \label{ineq:phi_middle_bound_lemma}
\pr{1+\frac{1}{\Phi^\gamma_{R}(n-1)}}\pr{1+\frac{1}{\Phi^\gamma_{R}(n+1)}}
\geq \frac{4}{\Phi^\gamma_{R+1}(n)}.
\end{equation}
\end{claim}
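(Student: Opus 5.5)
The plan is to first simplify the claim using the relation $R-\abs{n-1}$ versus $R+1-n$, and then to reduce it to an elementary inequality for $\log\cosh$. Write $c\coloneqq\gamma/(R+1-n)$, so that $1/\Phi^\gamma_{R+1}(n)=e^{-c}$. We split into the cases $n\ge1$ and $n=0$.

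\textbf{Case $n\geq 1$.} Here $\abs{n-1}=n-1$, hence $\Phi^\gamma_R(n-1)=\Phi^\gamma_{R+1}(n)=e^{c}$. Set $x\coloneqq R+1-n>2$ and $b\coloneqq\gamma/(x-2)$. The claim then becomes
\begin{equation*}
\pr{1+e^{-c}}\pr{1+e^{-b}}\geq 4e^{-c},
\qquad\text{equivalently}\qquad
1+e^{-b}\geq \frac{4}{e^{c}+1}.
\end{equation*}
If $e^{c}\ge 3$, this is trivial since the left-hand side is at least $1$. So assume $c<\log 3$. This means $x>\gamma/\log 3\ge 8/\log 3>7$, and therefore $b=c\,x/(x-2)<1.4\,c<1.6$; in this regime all quantities are small. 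Now write $\log\frac{1+e^{-t}}{2}=-\frac t2+\log\cosh\frac t2$. Taking logarithms, the inequality becomes
\begin{equation*}
\log\cosh\frac{b}{2}+\log\cosh\frac{c}{2}\geq \frac{b-c}{2}.
\end{equation*}
The right-hand side is exactly computable:
\begin{equation*}
b-c=\frac{2\gamma}{x(x-2)}=\frac{2bc}{\gamma},
\qquad\text{so}\qquad
\frac{b-c}{2}=\frac{bc}{\gamma}\leq \frac{bc}{8}\leq\frac{b^2+c^2}{16}.
\end{equation*}
It therefore suffices to have $\log\cosh s\ge s^2/4$ for $s\in[0,1]$. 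This follows from the Taylor bound $\log\cosh s\ge s^2/2-s^4/12$, which I would verify by differentiating twice, using $(\log\cosh)''=1-\tanh^2 s\ge 1-s^2$.

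\textbf{Case $n=0$.} Now $\abs{n-1}=1=n+1$, so both factors on the left equal $1+e^{-b}$ with $b=\gamma/(R-1)$, while $c=\gamma/(R+1)$. The claim reads $2\log(1+e^{-b})\ge \log 4-c$, that is,
\begin{equation*}
2\log\cosh\frac{b}{2}\geq b-c=\frac{2\gamma}{R^2-1}=\frac{2bc}{\gamma}.
\end{equation*}
When $b$ is small, the same quadratic lower bound on $\log\cosh$ together with $2bc/\gamma\le bc/4$ finishes the proof, since $c<b$. When $b$ is large, use $\log\cosh\frac b2\ge \frac b2-\log 2$ instead. The inequality then reduces to $c\ge 2\log 2$, and large $b$ means $R$ is small relative to $\gamma$, which forces $c=\gamma/(R+1)$ to be large.

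The main obstacle is matching the two regimes in the $n=0$ case. With a naive threshold (say $b>2$), one only gets $c>\gamma/(\gamma/2+2)\ge 4/3$, which falls just short of $2\log 2\approx1.386$. I would first try to move the threshold, for instance to $b>2.5$, and use a sharper quadratic bound $\log\cosh s\ge s^2/2-s^4/12$ on the small-$b$ side, which is still valid up to $s=1.25$. If that is still tight, I would use the full strength of $\gamma\ge8$ directly: the large-$b$ regime becomes $R<1+\gamma/2.5$, which gives $c$ comfortably above $2\log 2$ for all $\gamma\ge 8$. Apart from this threshold tuning, everything reduces to the one-variable estimate on $\log\cosh$.
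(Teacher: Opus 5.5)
Your case $n\ge1$ is correct, and it takes a different route from the paper, which replaces $e^{-\gamma/(R-n+1)}$ on the left by the smaller $e^{-\gamma/(R-n-1)}$ and so reduces to the $n=0$ inequality at $R-n$. There the target has a factor-$2$ slack at quadratic order, so $\log\cosh s\ge s^2/4$ together with AM--GM is enough.

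The gap is in the case $n=0$ with $b$ small, and it is not a matter of tuning the threshold. Write $c=\gamma b/(\gamma+2b)$. The target is $2\log\cosh(b/2)\ge b-c=2b^2/(\gamma+2b)$. At $\gamma=8$ the right side is $b^2/(4+b)=b^2/4-b^3/16+\dots$, while the left side is $b^2/4-b^4/96+\dots$. The quadratic terms cancel exactly (this is precisely where $\gamma\ge8$ is sharp), and the inequality holds only because of the cubic term.

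Consequences for your argument:
\begin{itemize}
\item The bound $\log\cosh s\ge s^2/4$ gives only $b^2/8$ on the left, which falls below $b-c$ at $\gamma=8$ for every $b<4$. For example, $\gamma=8$, $b=1$ gives $0.125<0.2=b-c$.
\item The reductions ``$2bc/\gamma\le bc/4$'' and ``$c<b$'' turn the target into $2\log\cosh(b/2)\ge b^2/4$, i.e.\ $\log\cosh s\ge s^2/2$. This is false for every $s>0$, so no sharper Taylor bound such as $s^2/2-s^4/12$ can rescue that chain.
\item Keeping $c$ and only using $2/\gamma\le 1/4$ still gives a false statement. At $\gamma=1000$, $R=501$ you get $b=2$ and $2\log\cosh 1\approx0.868<bc/4\approx0.996$.
\end{itemize}
So the obstacle you identified (matching the regimes) is misdiagnosed; the small-$b$ side itself fails.

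To repair it, keep $b-c=2b^2/(\gamma+2b)$ exactly. For fixed $b$ this is decreasing in $\gamma$, so it suffices to treat $\gamma=8$. Then $\log\cosh s\ge s^2/2-s^4/12$ gives $2\log\cosh(b/2)\ge b^2/(4+b)$ whenever $b(4+b)\le 24$, i.e.\ $b\le 2\sqrt7-2\approx3.29$. Your large-$b$ bound needs $c\ge 4b/(4+b)\ge 2\log 2$, i.e.\ $b\ge 8\log2/(4-2\log2)\approx2.12$. The two regimes overlap, and the argument closes.

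The paper avoids this bookkeeping altogether. It treats the $n=0$ inequality as $F(R)=1+e^{-\gamma/(R-1)}-2e^{-\gamma/(2(R+1))}\ge0$, notes $F(R)\to0$ as $R\to\infty$, and shows $F'\le0$. After $e^x\ge 1+x$, that condition reduces to $\gamma/(4(R-1))\ge 2/(R-1)$, i.e.\ exactly $\gamma\ge8$.
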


\begin{proof}
We begin with the case $n=0$, which translates to
\begin{equation*}
\pr{1+\frac{1}{\Phi^\gamma_{R}(-1)}}\pr{1+\frac{1}{\Phi^\gamma_{R}(1)}}
\geq \frac{4}{\Phi^\gamma_{R+1}(0)},
\end{equation*}
or after taking a square root,
\begin{equation*}
1+\exp\pr{\frac{-\gamma}{R-1}} \geq 2\exp\pr{\frac{-\gamma}{2(R+1)}}.
\end{equation*}
Denote $F(x)\coloneqq 1+\exp\pr{\frac{-\gamma}{x-1}}-2\exp\pr{\frac{-\gamma}{2(x+1)}}$. We need to show that $F(R)\geq0$ for all $R>1$. Observe that $\lim_{R\to\infty} F(R)=0$, so it suffices to prove that $F$ is decreasing. Differentiating, we need to show that
\begin{equation*}
F'(R) = \frac{\gamma}{\pr{R-1}^2} \exp\pr{\frac{-\gamma}{R-1}} -
\frac{\gamma}{\pr{R+1}^2} \exp\pr{\frac{-\gamma}{2(R+1)}} \leq 0 .
\end{equation*}
After rearranging and taking a square root, we get the equivalent condition
\begin{equation} \label{ineq:equiv_to_f_decreasing_in_claim}
\exp\pr{\frac{\gamma}{2(R-1)} - \frac{\gamma}{4(R+1)}}
\geq \frac{R+1}{R-1}.
\end{equation}
To verify \eqref{ineq:equiv_to_f_decreasing_in_claim} we apply $e^x \geq 1+x$ and find that
\begin{multline*}
\exp\pr{\frac{\gamma}{2(R-1)} - \frac{\gamma}{4(R+1)}} \geq 
1 + \frac{\gamma}{2(R-1)} - \frac{\gamma}{4(R+1)} \geq \\
\geq 1 + \frac{\gamma}{2(R-1)} - \frac{\gamma}{4(R-1)} 
\geq 1 + \frac{2}{R-1} = \frac{R+1}{R-1}.
\end{multline*}
To finish the proof of the claim, we consider the case $n>0$. Here, \eqref{ineq:phi_middle_bound_lemma} translates to
\begin{equation*}
\pr{1+\exp\pr{\frac{-\gamma}{R-n+1}}}\pr{1+\exp\pr{\frac{-\gamma}{R-n-1}}}
\geq 4\exp\pr{\frac{-\gamma}{R-n+1}}.
\end{equation*}
We may replace $\exp\pr{\frac{-\gamma}{R-n+1}}$ by $\exp\pr{\frac{-\gamma}{R-n-1}}$ and prove the stronger inequality
\begin{equation*}
\pr{1+\exp\pr{\frac{-\gamma}{R-n-1}}}^2
\geq 4\exp\pr{\frac{-\gamma}{R-n+1}},
\end{equation*}
which is equivalent to the already proven $F\pr{R-n}\geq0$ .
\end{proof}

As in \Cref{sec:localization}, the conservation properties of limited log-concavity under convolution with $\1_{\cbr{-1,1}}$ and under products provide the core of the induction step for establishing limited log-concavity of $f_{T,\mathrm{Hom}}$.
We proceed to develop lower distribution bounds used to leverage this property.

\begin{proposition} \label{prop:bounds_on_limited_log_conc_laws}
Let $X$ be a $2\ZZ$-valued random variable whose law is symmetric around zero and $\pr{R,\gamma}$-limited log-concave with $\gamma, R>0$. Then the following hold:
\begin{enumerate}[(a)]
    \item $\abs{X}$ stochastically dominates $\abs{\mathcal{N}_{2\ZZ\cap(-R/2,R/2)}\pr{0,\frac{2R}{\gamma}}}$; \label{item:limited_stoch_dom}
    \item $\var{X} \geq \frac{2R}{\gamma e^4 \sqrt\pi}$ whenever $R \geq \max\pr{4\gamma, 64/\gamma}$; \label{item:limited_variance_bound}
    \item $\prob{X = 2n}\geq e^{-\gamma n^2/R}\cdot \prob{X = 0}$ for all integers $-R/4 < n < R/4$; \label{item:limited_pointwise_bound}
    \item $\prob{X = 2n}\leq \frac{\max\pr{e^\gamma,4}}{\sqrt{R}}$ for all $n\in\ZZ$. \label{item:limited_pointwise_upper_bound}
\end{enumerate}
\end{proposition}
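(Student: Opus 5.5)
Write $f$ for the law of $X$. The plan mirrors the proof of \Cref{prop:bounds_on_strong_log_conc_laws}, with the inequalities reversed: we lower bound the successive ratios $f(2k+2)/f(2k)$ for $0\le 2k<R$. By symmetry, $f(0)^2\le \Phi^\gamma_R(0) f(2)^2$, so
\[
\frac{f(2)}{f(0)}\ge e^{-\gamma/(2R)}.
\]
For $0<2k<R$, limited log-concavity gives
\[
\frac{f(2k+2)}{f(2k)}\ge e^{-\gamma/(R-2k)}\,\frac{f(2k)}{f(2k-2)}.
\]
Iterating, for $2k<R$ we get
\[
\frac{f(2k+2)}{f(2k)}\ge \exp\Big(-\frac{\gamma}{2R}-\sum_{j=1}^{k}\frac{\gamma}{R-2j}\Big).
\]
When $2k<R/2$, each term satisfies $\gamma/(R-2j)\le 2\gamma/R$, so the ratio is at least $e^{-\gamma(2k+1/2)/R}$, in particular at least $e^{-\gamma(2k+1)/R}$. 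Multiplying these ratios gives (c):
\[
\frac{f(2n)}{f(0)}\ge e^{-\gamma\sum_{k<n}(2k+1)/R}=e^{-\gamma n^2/R}\qquad\text{for }|n|<R/4.
\]
Positivity of $f$ on this range follows from contiguity of the support, which contains $0$ by symmetry and unimodality; this needs to be checked.

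For (a), let $Y=\mathcal N_{2\ZZ\cap(-R/2,R/2)}(0,2R/\gamma)$. Its ratios are
\[
\frac{f_{|Y|}(2k+2)}{f_{|Y|}(2k)}=e^{-\gamma(2k+1)/R}\ \ (k\ge1),\qquad \frac{f_{|Y|}(2)}{f_{|Y|}(0)}=2e^{-\gamma/R},
\]
inside the window, and zero once $2k+2\ge R/2$. The ratios of $|X|$ dominate these within the window by the bounds above; note the $k=0$ case needs $2e^{-\gamma/(2R)}\ge 2e^{-\gamma/R}$, which holds. Outside the window the ratio of $|Y|$ vanishes. Hence $f_{|X|}/f_{|Y|}$ is increasing on the union of supports, and \Cref{lem:likelyhood_dom_implies_stoch_dom} applies with $|X|$ dominating.

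For (b), use $\var X=\mathbb E X^2\ge \mathbb E Y^2$, which follows from (a) since $x\mapsto x^2$ is increasing on $[0,\infty)$. It remains to lower bound the second moment of a truncated discrete Gaussian on $2\ZZ$ with scale $s^2=R/(2\gamma)$ in units of $2$, truncated at $|n|<R/4$. The conditions $R\ge4\gamma$ and $R\ge 64/\gamma$ should guarantee that $s\ge1$ (roughly) and that the truncation window $R/4$ exceeds a constant multiple of $s$, which is where $e^{4}$ comes from. To bound $\mathbb E Y^2$, split off the mass at $|n|\ge s$ or a similar threshold. Bound the denominator by $\sum_n e^{-n^2/2s^2}\le 1+\sqrt{2\pi}s$, and the numerator from below using the terms with $|n|\in[s,2s]$, each weighing at least $e^{-2}$, or a comparable choice. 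Equivalently, show $\mathbb P(|Y|\ge 2s)\gtrsim e^{-c}$ and use $\var\ge(2s)^2\,\mathbb P$. This constant-chasing step is the main obstacle: matching the exact constant $2R/(\gamma e^4\sqrt\pi)$ requires choosing the threshold carefully, and this is what I would fiddle with.

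For (d), by unimodality (\Cref{prop:unimodal}) it suffices to bound $f(0)$. From (c), $f(2n)\ge e^{-\gamma n^2/R}f(0)$ for $|n|<R/4$. Summing over $|n|\le\sqrt R$ (assuming $\sqrt R<R/4$) gives $1\ge f(0)(2\lfloor\sqrt R\rfloor+1)e^{-\gamma}$, hence $f(0)\le e^\gamma/\sqrt R$. In the small-$R$ regime, namely $R\le16$, the trivial bound $f\le1\le 4/\sqrt R$ covers it. Combining the two regimes gives the stated $\max(e^\gamma,4)/\sqrt R$.
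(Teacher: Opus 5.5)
Your proposal is the paper's proof. You lower-bound the ratios $f(2k+2)/f(2k)$ using $\Phi^\gamma_R(2k)\le e^{2\gamma/R}$ for $|2k|<R/2$, multiply them for (c), and compare likelihood ratios with the truncated Gaussian via \Cref{lem:likelyhood_dom_implies_stoch_dom} for (a). For (b) you pass to second moments, and for (d) you sum (c) over $|n|\le\sqrt R$ and use unimodality. Your first ratio $e^{-\gamma/(2R)}$ is slightly sharper than the paper's $e^{-\gamma/R}$, which is harmless.

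Two loose ends need tying up.

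\emph{Positivity.} Positivity of $f$ on $|2n|<R/2$ does not follow from contiguity of the support: a point mass at $0$ has contiguous support. It follows, as in the paper, from limited log-concavity itself. We have $f(0)>0$ by unimodality, and $f(2k)^2\le\Phi^\gamma_R(2k)f(2k-2)f(2k+2)$ with $f(2k)>0$ forces $f(2k+2)>0$. So your induction carries positivity along automatically.

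\emph{Part (b).} You left (b) unfinished, but the window you suggest already closes it. Set $s^2=R/(2\gamma)$.
\begin{itemize}
\item $R\ge 4\gamma$ gives $s\ge\sqrt2$, so $[s,2s]$ contains at least $s/2$ integers. Each has $n^2\ge s^2$ and weight $e^{-n^2/(2s^2)}\ge e^{-2}$.
\item $R\ge 64/\gamma$ gives $2s<R/4$, so all these integers lie inside the truncation.
\item The numerator $\sum 8n^2e^{-n^2/(2s^2)}$ is therefore at least $8\cdot\tfrac{s}{2}\cdot s^2e^{-2}=4s^3e^{-2}$.
\item The denominator is at most $1+\sqrt{2\pi}\,s\le 2\sqrt{2\pi}\,s$.
\end{itemize}
This gives $\var Y\ge R/(\gamma e^2\sqrt{2\pi})$, which exceeds the required $2R/(\gamma e^4\sqrt\pi)$. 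The paper instead restricts to $[\sqrt{R/\gamma},2\sqrt{R/\gamma}]=[\sqrt2\,s,2\sqrt2\,s]$. That choice of window, not the truncation width, is where its $e^{-4}$ comes from.
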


\begin{proof}
Let $f:2\ZZ\to[0,1]$ be the law of $X$.
We first show~\ref{item:limited_stoch_dom}.
As in the proof of \Cref{prop:bounds_on_strong_log_conc_laws}, we wish to bound the successive quotients $f(2n+2)/f(2n)$ to employ \Cref{lem:likelyhood_dom_implies_stoch_dom}. Since $f$ is symmetric and unimodal by \Cref{prop:unimodal}, its maximum is at zero, and therefore $f(0) > 0$. For all integers $n\in(-R/4,R/4)$, limited log-concavity gives
\begin{equation} \label{ineq:limited_log_conc_applied_for_stoch_dom}
f(2n)^2 \leq \Phi^\gamma_R(2n) \cdot f(2n-2)f(2n+2) \leq e^{2\gamma/R} \cdot f(2n-2)f(2n+2).
\end{equation}
In particular, for $n=0$ we deduce by symmetry that
\begin{equation*}
f(0)^2 \leq e^{2\gamma/R} \cdot f(-2)f(2) = e^{2\gamma/R} \cdot f(2)^2
\end{equation*}
so that $f(0) \leq e^{\gamma/R}f(2)$. Because $f(0) > 0$, this inequality forces $f(2) > 0$. By induction, it follows that $f(2n) > 0$ for all integers $n \in (-R/4, R/4)$. Thus, we can safely divide \eqref{ineq:limited_log_conc_applied_for_stoch_dom} by values of $f$ within this interval. For all integers $n\in[0,R/4)$, we iteratively deduce that
\begin{equation}
\label{ineq:delocalization_stoch_dom_helper_ineqs}
\frac{f(2n+2)}{f(2n)} \geq e^{-(2n+1)\gamma/R}.
\end{equation}
We wish to prove that $\abs{X}$ stochastically dominates $\abs{Y}$, where $Y=\mathcal{N}_{2\ZZ\cap(-R/2,R/2)}\pr{0,\frac{2R}{\gamma}}$ is a truncated Gaussian.
The laws of $\abs{X}$ and $\abs{Y}$ are respectively given by
\begin{equation*}
f_{\abs{X}}(2n)=\begin{cases}
			f(0), & n=0\\
            2f(2n), & n > 0
		 \end{cases}
\end{equation*}
and
\begin{equation*}
f_{\abs{Y}}(2n)\propto\begin{cases}
			1, & n=0\\
            2e^{-\gamma n^2/R}, & 0 < n < R/4 \\
            0, & n \geq R/4
		 \end{cases}.
\end{equation*}
For $0 < n < R/4-1$, \eqref{ineq:delocalization_stoch_dom_helper_ineqs} gives
\begin{equation*}
\frac{f_{\abs{X}}\pr{2n+2}}{f_{\abs{X}}\pr{2n}} \geq e^{-(2n+1)\gamma/R} = \frac{f_{\abs{Y}}\pr{2n+2}}{f_{\abs{Y}}\pr{2n}},
\end{equation*}
while for $n=0$, we have
\begin{equation*}
\frac{f_{\abs{X}}\pr{2}}{f_{\abs{X}}\pr{0}} \geq 2e^{-\gamma/R} = \frac{f_{\abs{Y}}\pr{2}}{f_{\abs{Y}}\pr{0}}.
\end{equation*}
Since $2n$ is outside the support of $f_{\abs{Y}}$ whenever $n\geq R/4$,
\Cref{lem:likelyhood_dom_implies_stoch_dom} implies that $\abs{Y}$ is stochastically dominated by $\abs{X}$, which establishes \ref{item:limited_stoch_dom}. This stochastic dominance also yields
$\var{X} \geq \var{Y}$.
Thus to prove \ref{item:limited_variance_bound}, we provide a lower bound on the variance of the truncated Gaussian $Y$. A direct calculation gives
\begin{equation*}
\var{Y} = \frac{\sum_{n=1}^{\floor{R/4}} 8n^2 e^{-\gamma n^2/R}}{1 + \sum_{n=1}^{\floor{R/4}} 2e^{-\gamma n^2/R}}.
\end{equation*}
To lower-bound the numerator, we restrict the sum to the interval $I = \br{\sqrt{R/\gamma}, 2\sqrt{R/\gamma}}$.
The assumption $R \geq \max\pr{4\gamma, 64/\gamma}$ ensures that $I$ is contained within the limits of the original sum, and contains at least $\frac{1}{2}\sqrt{R/\gamma}$ integers.
For all $n \in I$, we have $n^2 \geq R/\gamma$ and $e^{-\gamma n^2/R} \geq e^{-4}$. The numerator is thus bounded below by
\begin{equation*}
8 \pr{\frac{1}{2}\sqrt{\frac{R}{\gamma}}} \pr{\frac{R}{\gamma}} e^{-4} = \frac{4R^{3/2}}{e^4 \gamma^{3/2}}.
\end{equation*}
For the denominator, we use the standard integral upper bound:
\begin{equation*}
1 + 2\sum_{n=1}^{\infty} e^{-\gamma n^2/R} \leq 1 + 2\int_0^\infty e^{-\gamma x^2/R} dx = 1 + \sqrt{\frac{\pi R}{\gamma}}
\leq 2 \sqrt{\frac{\pi R}{\gamma}}.
\end{equation*}
Combining these bounds, we obtain the desired lower bound for the variance of $X$:
\begin{equation*}
\var{X} \geq \var{Y} \geq \frac{2 R}{\gamma e^4 \sqrt{\pi}}.
\end{equation*}
To prove \ref{item:limited_pointwise_bound}, observe that for $0<n<R/4$ we have by \eqref{ineq:delocalization_stoch_dom_helper_ineqs} that
\begin{equation*}
\frac{f(2n)}{f(0)} = \frac{f(2n)}{f(2n-2)} \cdot \cdots \cdot \frac{f(2)}{f(0)}
\geq e^{-(2n-1)\gamma/R} \cdot \cdots \cdot e^{-\gamma/R} = e^{-\gamma n^2/R},
\end{equation*}
and by symmetry the same is true for $-R/4<n<0$.

Finally, for \ref{item:limited_pointwise_upper_bound} we may assume that $R>16$, otherwise the inequality holds trivially. In particular, $\sqrt{R}<R/4$. Therefore, for all integers $0\leq n\leq \sqrt{R}$, \ref{item:limited_pointwise_bound} gives
\begin{equation*}
f(2n)\geq  e^{-\gamma n^2/R} \cdot f(0) \geq e^{-\gamma} \cdot f(0).
\end{equation*}
Summing over $n$, we get
\begin{equation*}
\sqrt{R} \cdot e^{-\gamma} \cdot f(0) \leq \sum_{n=0}^{\floor{\sqrt{R}}} f(2n) \leq 1
\end{equation*}
so that $f(0) \leq \frac{e^\gamma}{\sqrt{R}}$.
Finally, since $f$ is symmetric around zero and unimodal by \Cref{prop:unimodal}, this upper bound holds for all values of $f$.
\end{proof}

\subsection{Proof of delocalization}\label{sec:hom-delocalization}
\begin{theorem} \label{thm:hom_root_law_is_limited_log_conc}
Let $(T,v^*)$ be a odd or even finite rooted tree. Then $f_{T,\mathrm{Hom}}$ is $\pr{R,\gamma}$-limited log-concave for 
$R=\mathcal{R}_T$ and $\gamma=8$.
\end{theorem}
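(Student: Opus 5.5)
We argue by induction on the depth of the tree, following the scheme of \Cref{thm:hom_root_law_is_strong_log_conc} but with limited log-concavity in place of strong log-concavity. For the trivial tree of depth zero, $f_{T,\mathrm{Hom}}=\1_{\cbr{0}}$ and the resistance between the root and the leaves is $\mathcal{R}_T=0$. The interval $(-0,0)$ is empty, so the curvature condition is vacuous. Weak log-concavity of $\1_{\cbr{0}}$ is immediate, since its support is a single point. Hence $\1_{\cbr{0}}$ is $(0,8)$-limited log-concave. To avoid the degenerate value $R=0$, we will read the definition for $R\ge0$ with an empty constraint set; alternatively, we start the induction from depth one, where the law is $\1_{\cbr{-1,1}}$ with $R=1$, and check it directly.

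For the induction step, let $T$ be nontrivial and let $R_v=\mathcal{R}_{T[v]}$ for $v\in N(v^*)$. By the induction hypothesis each $f_{T[v],\mathrm{Hom}}$ is $(R_v,8)$-limited log-concave. Each of these laws is also symmetric around $0$, because $\phi\mapsto-\phi$ is a bijection of homomorphisms vanishing on the leaves. We then apply \Cref{prop:conv_limited_log_conc} with $\gamma=8$ to conclude that $f_{T[v],\mathrm{Hom}}*\1_{\cbr{-1,1}}$ is $(R_v+1,8)$-limited log-concave. This step needs $R_v>0$, or, when $R_v=0$, a direct check: the convolution $\1_{\cbr{-1,1}}$ of the point mass satisfies the condition at $n=0$ with $R=1$ because $e^{8}>4$, exactly as in Case~3.

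Next we apply \Cref{prop:prod_limited_log_conc} repeatedly to the product over children, which is legitimate by induction on the number of children. Reciprocals of the parameters add, so the product is $(R',8)$-limited log-concave with $1/R'=\sum_v 1/(R_v+1)$. By \Cref{lem:resistance_flow}, this gives $R'=\mathcal{R}_T$.

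It remains to pass to the normalized law. By \Cref{lem:hom_law_flow}, $f_{T,\mathrm{Hom}}$ is proportional to this product, and limited log-concavity is invariant under multiplication by positive constants, since both the weak log-concavity condition and the curvature condition are homogeneous of degree two. This completes the induction step.

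The main obstacle is bookkeeping rather than analysis. We must confirm the symmetry hypothesis required by \Cref{prop:conv_limited_log_conc}, and we must handle the base case consistently with a parameter $R=0$. For the latter, we either interpret \Cref{prop:conv_limited_log_conc} for $R=0$, where Case~3 alone is relevant and works since $\Phi^8_1(0)=e^8>4$, or we take depth-one trees as the base case. We also check that the repeated use of \Cref{prop:prod_limited_log_conc} requires no symmetry. Finally, the contiguous-support requirement is preserved automatically: all children's supports are symmetric intervals of the same parity, so their intersection is again such an interval.
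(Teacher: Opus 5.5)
Your proposal is correct and follows the paper's proof: induction on depth, combining \Cref{lem:hom_law_flow}, \Cref{prop:conv_limited_log_conc} (using the symmetry of each subtree law), \Cref{prop:prod_limited_log_conc} and \Cref{lem:resistance_flow}. Your extra care with leaf children ($R_v=0$) is a harmless refinement of the paper's convention that $\1_{\cbr{0}}$ is $(0,\gamma)$-limited log-concave.

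Two small slips, neither affecting the argument. First, $\1_{\cbr{-1,1}}$ lives on $2\ZZ+1$, so the constraint set $(-1,1)\cap(2\ZZ+1)$ is empty and there is no condition to check at $n=0$. Second, a depth-one tree with $k$ leaf children has resistance $1/k$, not $1$.
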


\begin{proof}
We induct on the depth of the tree. For the trivial tree of depth zero, the law $f_{T,\mathrm{Hom}}=\1_{\cbr{0}}$ is $(0,\gamma)$-limited log-concave (this degenerates to the sole requirement that $f_{T,\mathrm{Hom}}$ is weakly log-concave).

Suppose that $T$ is a nontrivial tree. Applying the induction hypothesis to $T[v]$ for $v\in N\pr{v^*}$, we obtain that $f_{T[v],\mathrm{Hom}}$ is $\Phi^{8}_{R_v}$-limited log-concave where $R_v=\mathcal{R}_{T[v]}$.
Recall that \Cref{lem:hom_law_flow} gives the recursive formula
\begin{equation*}
f_{T,\mathrm{Hom}} \propto \prod_{v\in N\pr{v^*}} \pr{f_{T[v],\mathrm{Hom}} * \1_{\cbr{-1,1}}} .
\end{equation*}
By \Cref{prop:prod_limited_log_conc,prop:conv_limited_log_conc} (noting that $f_{T,\mathrm{Hom}}$ is symmetric around 0) and \Cref{lem:resistance_flow}, the law $f_{T,\mathrm{Hom}}$ is $\Phi^{8}_{R}$-limited log-concave where
\begin{align*}
R
= \sum_{v\in N\pr{v^*}} \frac{1}{R_v+1}
= \mathcal{R}_{T} ,
\end{align*}
completing the induction step.
\end{proof}

\begin{corollary} \label{cor:lower_bounds_on_hom_root_laws}
Let $(T,v^*)$ be an even finite rooted tree, and let $\phi:V(T)\to\ZZ$ be a uniformly random homomorphism satisfying $\phi\equiv0$ on the leaves of $T$. With $R=\mathcal{R}_T$, the following hold:
\begin{enumerate}[(a)]
    \item $\abs{\phi\pr{v^*}}$ stochastically dominates $\abs{\mathcal{N}_{2\ZZ\cap(-R/2,R/2)}\pr{0,\frac{R}{4}}}$;
    \item $\var\pr{\phi\pr{v^*}} \geq \floor{\frac{R}{4e^4\sqrt{\pi}}}$;
    \item $\prob{\phi\pr{v^*} = 2n}\geq e^{-8n^2/R}\cdot \prob{\phi\pr{v^*} = 0}$ for all integers $-R/4 < n < R/4$;
    \item $\prob{\phi\pr{v^*} = 2n}\leq \frac{e^8}{\sqrt{R}}$ for all $n$.
\end{enumerate}
\end{corollary}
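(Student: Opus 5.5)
The corollary follows by combining \Cref{thm:hom_root_law_is_limited_log_conc} with \Cref{prop:bounds_on_limited_log_conc_laws}, specialized to $\gamma=8$.

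\textbf{Checking the hypotheses.} Since $(T,v^*)$ is even, $X\coloneqq\phi(v^*)$ is $2\ZZ$-valued. Its law $f_{T,\mathrm{Hom}}$ is symmetric around zero, because $\phi\mapsto-\phi$ is a bijection on homomorphisms vanishing on the leaves. By \Cref{thm:hom_root_law_is_limited_log_conc}, $f_{T,\mathrm{Hom}}$ is $(R,8)$-limited log-concave with $R=\mathcal{R}_T$. If $T$ is trivial, then $R=0$ and the law is $\1_{\cbr{0}}$, so every claim is immediate (the truncated Gaussian in (a) is then degenerate or trivially dominated). We may therefore assume $R>0$ and apply \Cref{prop:bounds_on_limited_log_conc_laws} with $\gamma=8$.

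\textbf{Reading off (a), (c) and (d).} These are direct substitutions.
For (a), the variance parameter is $2R/\gamma=R/4$.
For (c), the factor is $e^{-\gamma n^2/R}=e^{-8n^2/R}$, valid on the same range $-R/4<n<R/4$.
For (d), the constant is $\max(e^8,4)=e^8$.

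\textbf{Part (b).} This is the only place needing care, because \Cref{prop:bounds_on_limited_log_conc_laws}\ref{item:limited_variance_bound} requires $R\geq\max(4\gamma,64/\gamma)=\max(32,8)=32$. I split into two cases.
If $R\geq 32$, the proposition gives
\[
\var X\geq \frac{2R}{8e^4\sqrt\pi}=\frac{R}{4e^4\sqrt\pi}\geq\floor{\frac{R}{4e^4\sqrt\pi}}.
\]
If $R<32$, then
\[
\frac{R}{4e^4\sqrt\pi}<\frac{32}{4e^4\sqrt\pi}=\frac{8}{e^4\sqrt\pi}<1,
\]
since $e^4\sqrt\pi>8$. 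The floor is therefore $0$, and the bound holds trivially because variances are nonnegative.

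There is no real obstacle here. The only subtlety is exactly this threshold in (b), which is why the statement carries the floor function.
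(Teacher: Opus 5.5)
Your proposal is correct and matches the paper's implicit argument: specialize \Cref{prop:bounds_on_limited_log_conc_laws} to $\gamma=8$, using \Cref{thm:hom_root_law_is_limited_log_conc} for $(R,8)$-limited log-concavity and the bijection $\phi\mapsto-\phi$ for symmetry. In part (b) you handle the threshold $R\ge 32$ explicitly: below it the floor is $0$ because $8<e^4\sqrt{\pi}$. This is exactly the detail the paper leaves unstated.
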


\begin{theorem} \label{thm:hom_deloc_on_rec_tree}
Let $T$ be a recurrent locally finite tree. Then homomorphisms on $T$ are delocalized.
\end{theorem}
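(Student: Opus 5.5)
We will show that for every fixed $k$, $\prob{\abs{\phi_{T'}\pr{v^*}}\le k}\to0$ as the finite subtree $T'$ grows to exhaust $T$, by combining the pointwise upper bound of \Cref{cor:lower_bounds_on_hom_root_laws}(d) with the fact that effective resistances diverge on a recurrent tree.

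Fix a root $v^*$ and consider finite odd or even rooted subtrees $\pr{T',v^*}\subset\pr{T,v^*}$, with $\phi_{T'}$ uniform among homomorphisms on $T'$ vanishing at the leaves of $T'$. The meaning of ``the boundary recedes to infinity'' is that every leaf of $T'$ is at distance at least $r$ from $v^*$, with $r\to\infty$.

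The first step is to show that $\mathcal{R}_{T'}\to\infty$ under this condition. By Rayleigh monotonicity, $\mathcal{R}_{T'}$ is at least the effective resistance in $T$ between $v^*$ and the sphere of radius $r$, since shorting all vertices at distance $\ge r$ only decreases resistance. Recurrence of $T$ means this resistance between $v^*$ and the sphere of radius $r$ tends to $\infty$ as $r\to\infty$; this is the standard characterization of recurrence via $\mathcal{R}_T=\infty$ together with exhaustion by balls. Making the monotonicity comparison precise for a subtree $T'$ that may miss some branches is the only place needing care. Removing branches only increases resistance, and leaves of $T'$ far away behave like the shorted sphere. So I expect this step to be routine but the main technical point.

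The second step applies \Cref{cor:lower_bounds_on_hom_root_laws}(d). When $T'$ is even it gives $\prob{\phi_{T'}(v^*)=2n}\le e^8/\sqrt{\mathcal{R}_{T'}}$ for every $n$. Summing over the at most $k+1$ values in $[-k,k]$ yields $\prob{\abs{\phi_{T'}(v^*)}\le k}\le (k+1)e^8/\sqrt{\mathcal{R}_{T'}}\to0$.

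When $T'$ is odd, we use the same trick as in \Cref{thm:hom_localization}. Form $T''$ by adding a new root $v^{**}$ whose only child is $v^*$. Then $T''$ is even with $\mathcal{R}_{T''}=\mathcal{R}_{T'}+1$. By \Cref{lem:hom_law_flow}, the law of $\phi_{T''}(v^{**})$ is that of $\phi_{T'}(v^*)$ convolved with $\frac12\1_{\{-1,1\}}$. Hence
\[\prob{\phi_{T'}(v^*)=m}\le 2\prob{\phi_{T''}(v^{**})=m+1}\le 2e^8/\sqrt{\mathcal{R}_{T'}+1},\]
and the same summation applies.

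Together these show that the probability of the root height lying in any bounded set vanishes as the boundary recedes, which is delocalization.
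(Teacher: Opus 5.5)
Your proof is correct and follows essentially the same route as the paper: the pointwise bound of \Cref{cor:lower_bounds_on_hom_root_laws}(d), divergence of the effective resistance on a recurrent tree, and reducing odd trees to even ones by adding a new root. The differences are only in the details: you justify $\mathcal{R}_{T'}\to\infty$ explicitly via shorting and Rayleigh monotonicity for arbitrary finite subtrees whose leaves lie at distance at least $r$, whereas the paper only asserts it along an exhausting sequence. You also handle odd trees through the pointwise bound $\prob{\phi_{T'}(v^*)=m}\le 2\,\prob{\phi_{T''}(v^{**})=m+1}$, where the paper uses the stochastic comparison $|\phi_{T_i}(v^*)|\ge_{st}|\phi_{T'_i}(v^{**})|-1$.
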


\begin{proof}
Choose any vertex $v^*\in T$ as the root.
Let $T_1 \subset T_2 \subset T_3 \subset ... \subset T$ be an increasing sequence of odd or even finite subtrees of $T$ rooted in $v^*$, satisfying $\bigcup_{i=1}^{\infty} T_i = T$. Since $T$ is recurrent, we have $\mathcal{R}_{T_i} \to \infty$ as $i\to\infty$.

For every $i=1,2,3,...$, let $\phi_{T_i}:T_i\to\ZZ$ be a uniformly random homomorphism taking the value zero at the leaves of $T_i$. We show that $\abs{\phi_{T_i}\pr{v^*}}\to\infty$ in probability as $i \to \infty$, which establishes delocalization.
When $T_i$ is an even tree, \Cref{cor:lower_bounds_on_hom_root_laws} implies that for any fixed $A>0$, we have
\begin{equation*}
\prob{\abs{\phi_{T_i}\pr{v^*}} < A} \leq \frac{e^8 (2A - 1)}{\sqrt{\mathcal{R}_{T_i}}} \overset{i\to\infty}{\longrightarrow} 0.
\end{equation*}
If $T_i$ is an odd tree, consider the even tree $\pr{T'_i,v^{**}}$, obtained from $T_i$ by adding a new root $v^{**}$ whose only child is $v^*$. Since $\mathcal{R}_{T'_i} = \mathcal{R}_{T_i}+1 \to \infty$, we get that $|\phi_{T'_i}\pr{v^{**}}| \to \infty$ in probability by what we have just shown. Since $|\phi_{T_i}\pr{v^*}| \ge_{st} |\phi_{T'_i}\pr{v^{**}}| - 1$, we see that $|\phi_{T_i}\pr{v^*}| \to \infty$ in probability as well, which completes the proof of delocalization.
\end{proof}

\begin{remark}[delocalization under general boundary conditions] \label{remark:hom_deloc_different_boundary_conds}
Delocalization on recurrent trees readily extends to general symmetric, log-concave boundary conditions. 
Assuming that the leaf heights a-priori follow a symmetric and weakly log-concave law $f_0:\ZZ\to[0,1]$, the recursive proof in \Cref{thm:hom_root_law_is_limited_log_conc} is entirely unchanged.
This also holds when allowing $f_0$ to vary across the leaves.
Note that one cannot entirely drop the symmetry condition since it is possible, for example, to choose deterministic boundary conditions which force the height at the root to be zero.
\end{remark}

%% file: phifig.tex
\begin{figure}[t] 
    \centering 
    
    \begin{tikzpicture}
    \begin{axis}[
        axis lines = center,
        xlabel = {$x$},
        ylabel = {$y$},
        xmin = -20, xmax = 20,
        ymin = -2, ymax = 100,
        domain = -15.9:15.9,     
        samples = 100,       
        restrict y to domain=-2:200,
        grid = major,
        width = 12cm,
        height = 8cm
    ]
    
    \addplot [
        color=red, 
        thick
    ] {exp(8/(16-abs(x))};
    \addlegendentry{$\Phi^{\gamma}_R(x)\coloneqq e^{\gamma/\pr{R-\abs{x}}}$}

    \draw [dashed] (-16,-10) -- (-16,200);
    \draw [dashed] (16,-10) -- (16,200);
    \addlegendimage{dashed}
    \addlegendentry{$x=\pm R$}
    \end{axis}
    \end{tikzpicture}
    \caption{The graph of $y=\Phi^{\gamma}_R(x)$ for $\gamma=8$,  $R=16$. This function is defined on $(-R,R)$. Smaller values near the center of the interval translate to stronger constraints on the log-curvature of $(R,\gamma)$-limited log-concave functions at those points.}
    \label{fig:phi_gamma_r}
\end{figure}

%% file: lipschitz.tex
\section{Lipschitz functions} \label{sec:lipschitz}
In this section we prove \Cref{thm:equivalence_lip} (split into \Cref{thm:lip_loc_on_tr_tree,thm:lip_deloc_on_rec_tree} below) by adapting the recursive argument to support Lipschitz functions. The change stems from using a different convolution kernel, $\1_{\cbr{-M,...,M}}$, instead of $\1_{\cbr{-1,1}}$. This warrants amending the notions of strong and limited log-concavity used, so that they are maintained by convolution with this kernel.

A more technical change is that the distributions discussed in this section are no longer even-valued or odd-valued, as Lipschitz functions do not have the same parity restrictions as homomorphisms. This naturally causes a slight change to the definitions used in this section.

Given a finite rooted tree $\pr{T,v^*}$, we denote by $f_{T, M-\mathrm{Lip}}$ the law of $\phi\pr{v^*}$, where $\phi:T\to\ZZ$ is a uniformly random $M$-Lipschitz function taking the value zero at the leaves.

\subsection{Localization on transient trees}
The proof for localization of random Lipschitz functions follows along the same lines as the argument for random homomorphisms in \Cref{sec:localization}: Recursively establishing strong log-concavity of the law at the root, and producing distribution bounds accordingly. We begin by giving a refined version of strong log-concavity.

\begin{definition}
We say that $f:\ZZ\to\RRnonn$ is \emph{(weakly) log-concave} if its support consists of a contiguous sequence of integers, and
\begin{equation*}
f(n)^2 \geq f(n-1)f(n+1) \qquad\text{for all } n \in \ZZ.
\end{equation*}
\end{definition}

\begin{definition} \label{defn:alpha_m_strong_log_conc}
Let $\alpha>1$ and let $m$ be a positive integer. We say that $f:\ZZ\to\RRnonn$ is \emph{$\pr{\alpha,m}$-strongly log-concave} if it is weakly log-concave and
\begin{equation*}
f(n+1)f(n+m) \geq \alpha\cdot f(n)f(n+m+1)
\qquad\text{for all } n \in \ZZ.
\end{equation*}
\end{definition}

As opposed to $\alpha$-strong log-concavity, which imposes a uniform upper bound of $\frac{1}{\alpha}$ on the log-curvature, $(\alpha,m)$-strong log-concavity only requires a uniform upper bound of $1$, combined with a stricter upper bound on the average log-curvature over intervals of length $m$.

Crucially, this new notion of strong log-concavity is conserved under convolution with the indicator function of an interval of length $m$, as well as under products.

\begin{proposition} \label{prop:product_lipschitz_log_conc}
Let $f_1,f_2:\ZZ\to\RRnonn$ such that $f_i$ is $\pr{\alpha_i,m}$-strongly log-concave. Then $f_1\cdot f_2$ is $\pr{\alpha_1\alpha_2, m}$-strongly log-concave.
\end{proposition}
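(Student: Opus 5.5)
The plan is to follow the template of \Cref{prop:strong_log_conc_multiplicativity}: verify the two defining conditions of $(\alpha_1\alpha_2,m)$-strong log-concavity separately for $g\coloneqq f_1\cdot f_2$. There are three things to check: contiguity of the support of $g$, weak log-concavity of $g$, and the averaged curvature inequality
\begin{equation*}
g(n+1)g(n+m)\ \geq\ \alpha_1\alpha_2\, g(n)g(n+m+1) \qquad \text{for all } n\in\ZZ .
\end{equation*}

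First, the support of $g$ is the intersection of the supports of $f_1$ and $f_2$. Each of these is a contiguous interval of integers, so their intersection is a (possibly empty) contiguous interval. Weak log-concavity then follows by multiplying the two inequalities $f_i(n)^2\geq f_i(n-1)f_i(n+1)$, exactly as in the $\ZZ$-valued analogue of \Cref{prop:weak_log_conc_closure}.

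Second, for the averaged condition, fix $n\in\ZZ$ and multiply the two hypotheses
\begin{equation*}
f_i(n+1)f_i(n+m)\ \geq\ \alpha_i\, f_i(n)f_i(n+m+1), \qquad i=1,2 .
\end{equation*}
Both sides are nonnegative, so multiplication preserves the inequality. Regrouping the factors gives
\begin{equation*}
g(n+1)g(n+m)\ \geq\ \alpha_1\alpha_2\, g(n)g(n+m+1),
\end{equation*}
which is exactly the required inequality.

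I do not expect a real obstacle, since everything is a pointwise product of nonnegative inequalities. The only care point is the empty-support or zero-value edge case: if $g$ vanishes at some of the points involved, both sides may be $0$. That case is still covered, because multiplying nonnegative inequalities is valid regardless, and an empty support trivially counts as contiguous (or is excluded by convention).
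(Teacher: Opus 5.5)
Your proof is correct and takes essentially the same approach as the paper, which simply multiplies the two nonnegative inequalities $f_i(n+1)f_i(n+m)\geq\alpha_i f_i(n)f_i(n+m+1)$ and regroups. Your explicit check that $f_1 f_2$ has contiguous support and is weakly log-concave is a useful addition, since the paper's proof leaves that part of the definition implicit.
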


\begin{proposition} \label{prop:convolve_with_offset_m_lip_kernel}
Let $f:\ZZ\to\RRnonn$ be $(\alpha,m)$-strongly log-concave. Then $f*\1_{\cbr{0,1,...,m-1}}$ is $(\beta,m)$-strongly log-concave for $\beta=\frac{m}{m+\alpha^{-m}-1}$.
\end{proposition}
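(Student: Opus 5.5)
The plan is to verify the two requirements of \Cref{defn:alpha_m_strong_log_conc} for $g\coloneqq f*\1_{\cbr{0,1,\dots,m-1}}$ separately. Weak log-concavity, including contiguity of the support, should follow from the classical fact that the convolution of two log-concave sequences is log-concave. Here $\1_{\cbr{0,\dots,m-1}}$ is log-concave, and if $f$ is supported on $[a,b]\cap\ZZ$ then $g$ is supported on $[a,b+m-1]\cap\ZZ$. The substance is the inequality $g(n+1)g(n+m)\ge\beta\, g(n)g(n+m+1)$.

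For this I would write $g$ as sliding window sums. Put $a=f(n-m+1)$, $b=f(n+1)$, $c=f(n+m+1)$, $S=g(n+1)=\sum_{k=n-m+2}^{n+1}f(k)$ and $U=g(n+m)=\sum_{k=n+1}^{n+m}f(k)$. Then $g(n)=S-b+a$ and $g(n+m+1)=U-b+c$, so the target becomes
\begin{equation*}
SU\ \ge\ \beta\,(S-b+a)(U-b+c).
\end{equation*}
The two windows share only the point $n+1$.

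Next I would use log-concavity in ratio form. Let $r_k=f(k+1)/f(k)$ on the support. Weak log-concavity says $r_k$ is nonincreasing, and $(\alpha,m)$-strong log-concavity says exactly $r_k\ge\alpha\, r_{k+m}$: the ratio drops by a factor at least $\alpha$ over every $m$ steps. This gives the bounds I need.
\begin{itemize}
\item Since $r_k$ is nonincreasing, every term of the left window is at least $a$ times a power of the ratio $r_{n-m+1}$, and every term of the right window controls $c$. This yields $a\le b\cdot r_{n-m+1}^{-m}$-type bounds and $c\le b\cdot r_{n+m}^{m}$-type bounds, linked through $r_{n-m+1}\ge\alpha\, r_{n+1}$ and $r_{n+1}\ge\alpha\, r_{n+m+1}$.
\item As in the proof of \Cref{prop:conv_strong_log_conc}, the right-hand side should be largest when $a$ and $c$ are as large as these constraints allow. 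I would show that the worst case is when $f$ is exactly geometric on each window, with ratio $x$ on the left window and ratio $x/\alpha^{1/m}$ per step (so a total drop of $\alpha$ over $m$ steps) on the right.
\end{itemize}
The inequality should then reduce to an elementary one-variable inequality in $x>0$ of the form $F(x)\ge\beta$. I expect $F$ to be minimized in the flat limit, where $f$ is essentially constant on the left window and the right window absorbs the factor $\alpha$. There $S=m$, $S-b+a\approx m$, $U\approx m$ and $U-b+c\approx m-1+\alpha^{-m}$, which gives precisely $\beta=\frac{m}{m+\alpha^{-m}-1}$. This supports the guess for the extremal configuration. I would handle the cases where $f(n-m+1)$ or $f(n+m+1)$ vanishes, i.e.\ the support boundaries, separately, as in the earlier proofs; there the inequality only gets easier.

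The main obstacle is the reduction step: rigorously justifying that, given $S$, $U$ and $b$, the extremal choice of $a$ and $c$ comes from geometric profiles. Equivalently, one must find the right pair of consequences of log-concavity bounding $a/S$ and $c/U$ jointly, since these two quantities are coupled through the shared value $b$. The first thing I would try is to bound $a\le \frac{(S-b)\,\theta}{m-1}$ and $c\le\frac{(U-b)\,\theta'}{m-1}$-style averages using monotonicity of the $r_k$ (Chebyshev/rearrangement over each window), with $\theta\theta'\le\alpha^{-m}$-type coupling. After that, the remaining calculus check on $F$ should be routine, as in \Cref{prop:conv_strong_log_conc}.
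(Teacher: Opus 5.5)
\textbf{There is a genuine gap.} Your treatment of weak log-concavity and of the support of $g$ is fine. The substantive inequality $g(n+1)g(n+m)\ge\beta\,g(n)g(n+m+1)$, however, is never proved. The reduction to an extremal profile, which you flag as the main obstacle, is the whole content of the proposition, and the ingredients you sketch for it do not work as stated.

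\emph{The bounds in your first bullet go the wrong way.} Since $r_k$ is nonincreasing, $b/a=\prod_{k=n-m+1}^{n}r_k\le r_{n-m+1}^{m}$, which gives the \emph{lower} bound $a\ge b\,r_{n-m+1}^{-m}$. Likewise $c\ge b\,r_{n+m}^{m}$. The true upper bounds, $a\le b\,r_n^{-m}$ and $c\le b\,r_{n+1}^{m}$, involve the adjacent ratios $r_n\ge r_{n+1}$, which strong log-concavity does not separate. The only clean coupling is the joint bound $ac\le\alpha^{-m}b^2$, obtained by telescoping $r_k\ge\alpha r_{k+m}$ over $n-m+1\le k\le n$. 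That bound alone does not control the cross terms $a(U-b)$ and $c(S-b)$, nor the excess $(\beta-1)(S-b)(U-b)$.

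\emph{Your heuristic extremal configurations are not admissible.} In the ``flat limit'', $U\approx m$ together with a flat left window means $f\approx\mathrm{const}$ on $[n-m+1,n+m]$. For $m\ge2$, the defining inequality at $n-m+1$, namely $f(n-m+2)f(n+1)\ge\alpha f(n-m+1)f(n+2)$, then fails. Your geometric profile fails for the same reason: a constant ratio $x$ on the left window forces every ratio on the right window to be at most $x/\alpha$. So the numerical match with $\beta$ does not identify a worst case. The problem is also not a one-variable problem in $x$, since $f$ has $2m+1$ free values on $[n-m+1,n+m+1]$.

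\textbf{The missing idea is to not aggregate the windows.} Expand both products as double sums,
\[
g(n+1)g(n+m)=\sum_{i,j=0}^{m-1}f(n+1-i)f(n+m-j),\qquad g(n)g(n+m+1)=\sum_{i,j=0}^{m-1}f(n-i)f(n+m+1-j),
\]
and compare them anti-diagonal by anti-diagonal, $r=i+j$.
\begin{enumerate}[(i)]
\item For $r\le m-1$, set $F(i)=f(n+1-i)f(n+m-r+i)$. The left side contributes $\sum_{i=0}^{r}F(i)$ and the right side contributes $\beta\sum_{i=1}^{r+1}F(i)$.
\item Weak log-concavity gives $F(0)\ge F(1)\ge\cdots\ge F(r+1)$. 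This uses $n+m-r+i\ge n+1-i$.
\item Telescoping $(\alpha,m)$-strong log-concavity gives $F(r+1)\le\alpha^{-(r+1)}F(0)$.
\item These reduce each anti-diagonal to $\beta\le\frac{r+1}{r+\alpha^{-r-1}}$. This holds because the right-hand side is decreasing in $r$ and equals $\beta$ at $r=m-1$.
\item The anti-diagonals with $r\ge m-1$ are handled symmetrically.
\end{enumerate}
This termwise pairing of products is exactly the structure that your variables $S,U,a,b,c$ discard, and it is what turns the problem into elementary inequalities.
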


$(\alpha,m)$-strong log-concavity also gives distribution bounds that are analogous to \Cref{prop:bounds_on_strong_log_conc_laws}.

\begin{proposition} \label{prop:bounds_on_strong_log_conc_laws_lipschitz}
Let $X$ be an integer-valued random variable, whose law is symmetric around zero and $(\alpha,2M+1)$-strongly log-concave with $\alpha>1$. Then the following hold:
\begin{enumerate}[(a)]
    \item $\abs{X}$ is stochastically dominated by
    $M+1+\abs{\mathcal{N}_{\ZZ}\pr{0,\frac{4M+2}{\log\alpha}}}$; \label{item:lip_strong_stoch_dom}
    \item $\var{X} \leq 2(M+1)^2 + \frac{8M+4}{\log\alpha}$; \label{item:lip_strong_variance_bound}
    \item $\prob{X = n}\leq \alpha^{-\frac{(n-M-1)^2 - 1}{8M+4}}\cdot \prob{X = 0}$ for all integers $n>M$. \label{item:lip_strong_pointwise_bound}
\end{enumerate}
\end{proposition}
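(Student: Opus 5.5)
The plan is to work with the successive ratios $r(n)\coloneqq f(n+1)/f(n)$ of the law $f$ of $X$, and then compare $\abs{X}$ with a shifted discrete Gaussian via \Cref{lem:likelyhood_dom_implies_stoch_dom}, exactly as in \Cref{prop:bounds_on_strong_log_conc_laws}. Throughout, $m=2M+1$.

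\textbf{Basic facts about $r$.} By weak log-concavity, $r$ is nonincreasing on the support. By symmetry and unimodality (the $\ZZ$-valued analogue of \Cref{prop:unimodal}, proved the same way), $f$ is maximized at $0$ and decreasing in $\abs{n}$. The $(\alpha,m)$-condition says $r(n+m)\le r(n)/\alpha$ for all $n$.

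\textbf{Key step: a starting bound from symmetry.} Apply the $(\alpha,m)$-condition with $n=-M-1$. It reads $r(M)\le r(-M-1)/\alpha$. By symmetry,
\[
r(-M-1)=\frac{f(-M)}{f(-M-1)}=\frac{f(M)}{f(M+1)}=\frac{1}{r(M)}.
\]
Hence $r(M)^2\le 1/\alpha$, that is, $r(M)\le\alpha^{-1/2}$.

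\textbf{Block decay.} Iterating the $(\alpha,m)$-condition gives $r(M+jm)\le\alpha^{-1/2-j}$. Combined with monotonicity of $r$, this yields
\[
r(M+1+k)\le\alpha^{-1/2-\floor{(k+1)/m}}\le \alpha^{-1/2-\floor{k/m}} \qquad\text{for all } k\ge0.
\]
(If $f$ vanishes somewhere along the way, the contiguous support makes everything trivial.)

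\textbf{Part (a).} Let $Y=M+1+\abs{Z}$ with $Z\sim\mathcal{N}_\ZZ(0,\sigma^2)$ and $\sigma^2=2m/\log\alpha$. The ratios of $\abs{Z}$ are $2\alpha^{-1/(4m)}$ at $k=0$ and $\alpha^{-(2k+1)/(4m)}$ for $k\ge1$. The ratios of $\abs{X}$ at $M+1+k$ equal $r(M+1+k)$ (for $M+1\ge1$ no factor-$2$ issue arises). So the needed inequality is
\[
\alpha^{-1/2-\floor{k/m}}\le\alpha^{-(2k+1)/(4m)}.
\]
Writing $k=jm+s$ with $0\le s<m$, this reduces to $\tfrac12+j\ge \tfrac j2+\tfrac{2s+1}{4m}$, which holds because $\tfrac{2s+1}{4m}<\tfrac12$. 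Below $M+1$ the law of $Y$ vanishes, so the likelihood ratio $f_Y/f_{\abs{X}}$ is $0$ there and then positive. Hence it is increasing on the whole union of supports, and \Cref{lem:likelyhood_dom_implies_stoch_dom} gives (a).

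\textbf{Part (b).} From the dominance in (a),
\[
\var X\le\mathbb{E}X^2\le\mathbb{E}(M+1+\abs{Z})^2\le 2(M+1)^2+2\,\mathbb{E}Z^2.
\]
The bound $\mathbb{E}Z^2\le\sigma^2$ for the integer Gaussian (cited in \Cref{prop:bounds_on_strong_log_conc_laws}) then gives $2(M+1)^2+\tfrac{8M+4}{\log\alpha}$.

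\textbf{Part (c).} For $n=M+1+j$, telescope the ratio bounds above. Using $f(M+1)\le f(0)$, we get
\[
f(n)\le f(0)\prod_{k<j}\alpha^{-(2k+1)/(4m)}=\alpha^{-j^2/(4m)}f(0),
\]
which is stronger than the stated $\alpha^{-(j^2-1)/(8M+4)}f(0)$.

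\textbf{Main obstacle.} The one genuinely nontrivial point is extracting the initial decay $r(M)\le\alpha^{-1/2}$: the $(\alpha,m)$-condition alone only gives relative decay across blocks, and it is the symmetric choice $n=-M-1$ that anchors it. The second delicate point is matching the stepwise block decay against the Gaussian's linear ratio decay, which works precisely because of the extra $\tfrac12$ and the shift by $M+1$.
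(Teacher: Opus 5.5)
Your proof is correct and follows essentially the same route as the paper. It anchors $f(M+1)/f(M)\le\alpha^{-1/2}$ via symmetry at $n=-M-1$, iterates the $(\alpha,2M+1)$-condition across blocks using monotonicity of the ratios, compares likelihood ratios with the shifted discrete Gaussian, and bounds $\mathbb{E}Y^2\le 2(M+1)^2+2\mathbb{E}Z^2$. The only differences are cosmetic: your block index $\floor{k/m}$ is slightly weaker than the paper's $\floor{(k+1)/m}$ but still suffices, and telescoping from $M+1$ rather than $M$ gives you a marginally sharper exponent in part (c).
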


Before proving \Cref{prop:product_lipschitz_log_conc,prop:convolve_with_offset_m_lip_kernel,prop:bounds_on_strong_log_conc_laws_lipschitz}, we briefly describe the remainder of the argument proving localization of Lipschitz functions.

\begin{corollary} \label{cor:convolve_with_m_lip_kernel}
Let $f:\ZZ\to\RRnonn$ be $(\alpha,2M+1)$-strongly log-concave. Then $f*\1_{\cbr{-M,...,M}}$ is $(\beta,2M+1)$-strongly log-concave for $\beta=\frac{2M+1}{2M+\alpha^{-2M-1}}$.
\end{corollary}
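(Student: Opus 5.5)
This follows from \Cref{prop:convolve_with_offset_m_lip_kernel} with $m=2M+1$, combined with the observation that $(\alpha,m)$-strong log-concavity is invariant under integer translations. We write the centered kernel as a shift of the one-sided kernel:
\begin{equation*}
\1_{\cbr{-M,\dots,M}}(x)=\1_{\cbr{0,1,\dots,2M}}(x+M),
\end{equation*}
so that
\begin{equation*}
\pr{f*\1_{\cbr{-M,\dots,M}}}(n)=\pr{f*\1_{\cbr{0,\dots,2M}}}(n+M)
\end{equation*}
for all $n\in\ZZ$. In other words, $f*\1_{\cbr{-M,\dots,M}}$ is the translate $g(\cdot+M)$ of $g\coloneqq f*\1_{\cbr{0,\dots,m-1}}$.

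\Cref{prop:convolve_with_offset_m_lip_kernel} with $m=2M+1$ shows that $g$ is $(\beta,2M+1)$-strongly log-concave with
\begin{equation*}
\beta=\frac{m}{m+\alpha^{-m}-1}=\frac{2M+1}{2M+\alpha^{-2M-1}},
\end{equation*}
which is exactly the claimed parameter.

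It remains to check that $h(n)\coloneqq g(n+M)$ inherits all the defining conditions.
\begin{itemize}
\item[(i)] The support of $h$ is the support of $g$ shifted by $-M$, so it is still a contiguous set of integers.
\item[(ii)] Weak log-concavity, $h(n)^2\ge h(n-1)h(n+1)$, is exactly the condition for $g$ at the point $n+M$.
\item[(iii)] The inequality $h(n+1)h(n+m)\ge\beta\, h(n)h(n+m+1)$ for all $n\in\ZZ$ is the corresponding inequality for $g$ at the index $n+M$.
\end{itemize}
All three conditions are quantified over every $n\in\ZZ$, so they are translation invariant.

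There is no real obstacle here. The only point needing care is confirming that the definition of $(\alpha,m)$-strong log-concavity involves no reference to the origin, so that the shift by $M$ is harmless. The substantive work is contained in \Cref{prop:convolve_with_offset_m_lip_kernel}, which we are assuming.
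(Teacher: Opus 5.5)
Your proposal is correct and is the same argument as the paper's, which also obtains the corollary directly from \Cref{prop:convolve_with_offset_m_lip_kernel} with $m=2M+1$ together with translation invariance of $(\alpha,m)$-strong log-concavity. Your explicit shift identity and your check that the support condition, weak log-concavity, and the $(\beta,m)$ inequality all transfer under translation spell out what the paper leaves implicit.
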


\begin{proof}
Follows directly from \Cref{prop:convolve_with_offset_m_lip_kernel} and translation invariance of strong log-concavity.
\end{proof}

\begin{theorem} \label{thm:lip_root_law_is_strong_log_conc}
Let $M\geq1$ and let $(T,v^*)$ be a finite rooted tree. Then the law $f_{T,M-\mathrm{Lip}}$ is $\pr{e^{1/(2M+1)R},2M+1}$-strongly log-concave for $R=\mathcal{R}_T$.
\end{theorem}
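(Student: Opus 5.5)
The proof is an induction on the depth of $T$, parallel to the proof of \Cref{thm:hom_root_law_is_strong_log_conc}. Throughout, $m=2M+1$.

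\textbf{Recursion.} First I record the Lipschitz analogue of \Cref{lem:hom_law_flow}:
\begin{equation*}
f_{T,M-\mathrm{Lip}} \propto \prod_{v\in N\pr{v^*}} \pr{f_{T[v],M-\mathrm{Lip}} * \1_{\cbr{-M,\dots,M}}}.
\end{equation*}
The proof is the same counting argument. Fix the root height $n$. The subtrees $T[v]$ then contribute independently, and the child $v$ may take any height $n+k$ with $|k|\le M$, weighted by the number of $M$-Lipschitz functions on $T[v]$ that vanish on the leaves and take that value at $v$.

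\textbf{Base case.} For the trivial tree, $f_{T,M-\mathrm{Lip}}=\1_{\cbr{0}}$. This function is weakly log-concave. It is also $(\alpha,m)$-strongly log-concave for every $\alpha>1$: the right-hand side $f(n)f(n+m+1)$ always vanishes, because $n$ and $n+m+1$ cannot both equal $0$. I interpret $e^{1/(mR)}$ with $R=0$ as $\infty$, exactly as in the homomorphism case.

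\textbf{Induction step.} Let $R_v=\mathcal{R}_{T[v]}$, and assume by induction that $f_{T[v],M-\mathrm{Lip}}$ is $(\alpha_v,m)$-strongly log-concave with $\alpha_v=e^{1/(mR_v)}$. Since $\alpha_v^{-m}=e^{-1/R_v}$, \Cref{cor:convolve_with_m_lip_kernel} shows that each convolved factor is $(\beta_v,m)$-strongly log-concave with
\begin{equation*}
\beta_v=\frac{m}{m-1+e^{-1/R_v}}.
\end{equation*}
I then bound $\beta_v$ from below by a resistance-type quantity:
\begin{enumerate}[(i)]
\item The inequality $e^{-x}\le \frac{1}{1+x}$ with $x=1/R_v$ gives $e^{-1/R_v}\le 1-\frac{1}{R_v+1}$.
\item Hence $\beta_v\ge \pr{1-\frac{1}{m(R_v+1)}}^{-1}$.
\item Applying the same inequality again gives $\beta_v\ge e^{1/(m(R_v+1))}$.
\end{enumerate}
For a leaf child ($R_v=0$), the factor $\1_{\cbr{-M,\dots,M}}$ gives $\beta_v=m/(m-1)\ge e^{1/m}$, which is consistent with the same bound.

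Next I combine the children. By \Cref{prop:product_lipschitz_log_conc}, applied iteratively, the product is $(\prod_v\beta_v,m)$-strongly log-concave. By \Cref{lem:resistance_flow},
\begin{equation*}
\prod_v\beta_v \ge \exp\pr{\frac1m\sum_v \frac{1}{R_v+1}}=e^{1/(mR)}.
\end{equation*}
Normalization does not affect the property, and $(\alpha,m)$-strong log-concavity is monotone in $\alpha$. This completes the induction step.

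\textbf{Where care is needed.} There is no real obstacle; the work sits in \Cref{prop:product_lipschitz_log_conc} and \Cref{cor:convolve_with_m_lip_kernel}, which I take as given. The only point to check is that the product has contiguous, nonempty support, so that it is a legitimate weakly log-concave law. This holds because every factor is supported on an interval of integers containing $0$, so their intersection is an interval containing $0$.
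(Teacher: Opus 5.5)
Your proof is correct and follows the paper's argument essentially step for step. You use the same recursion, the same application of \Cref{prop:product_lipschitz_log_conc} and \Cref{cor:convolve_with_m_lip_kernel}, the same two-fold use of $e^{-x}\le \frac{1}{1+x}$ to obtain $\beta_v\ge e^{1/(m(R_v+1))}$, and then \Cref{lem:resistance_flow}; your separate treatment of leaf children ($R_v=0$) and of the nonempty contiguous support of the product are small details the paper leaves implicit.
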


\begin{proof}
We induct on the depth of the tree. For the trivial tree $T$ of height zero, $f_{T,M-\mathrm{Lip}}=\1_{\cbr{0}}$ is $(\infty,2M+1)$-strongly log-concave (that is, $(\alpha,2M+1)$-strongly log-concave for all $\alpha>1$).

Suppose that $T$ is nontrivial.
A counting argument along the lines of the proof of \Cref{lem:hom_law_flow} shows that
\begin{equation*}
f_{T,M-\mathrm{Lip}} \propto \prod_{v\in N\pr{v^*}} \pr{f_{T[v],M-\mathrm{Lip}} * \1_{\cbr{-M,...,M}}} .
\end{equation*}
By the induction hypothesis, each $f_{T[v],M-\mathrm{Lip}}$ is $\pr{e^{1/(2M+1)R_v},2M+1}$-strongly log-concave where $R_v=\mathcal{R}_{T[v]}$.
By \Cref{prop:product_lipschitz_log_conc} and \Cref{cor:convolve_with_m_lip_kernel}, $f_{T,M-\mathrm{Lip}}$ is $\pr{\alpha,2M+1}$-strongly log-concave where
\begin{align*}
\alpha
= \prod_{v\in N\pr{v^*}} \frac{2M+1}{2M+e^{-1/R_v}} .
\end{align*}
Applying the bound $e^{-x}\leq \frac{1}{1+x}$ for $x>-1$ twice, we get
\begin{equation*}
\frac{2M+1}{2M+e^{-1/R_v}} \geq \frac{2M+1}{2M+\frac{R_v}{R_v+1}} =
\frac{1}{1-\frac{1}{(2M+1)\pr{R_v+1}}} \geq e^{1/(2M+1)\pr{R_v+1}} .
\end{equation*}
Multiplying this over $v$, we have
\begin{align*}
\alpha
\geq \prod_{v\in N\pr{v^*}}e^{1/(2M+1)\pr{R_v+1}}
= e^{1/(2M+1)R}
\end{align*}
by \Cref{lem:resistance_flow}, completing the induction step.
\end{proof}

\begin{corollary} \label{cor:upper_bounds_on_lip_root_laws}
Let $(T,v^*)$ be an even finite rooted tree, and let $M$ be a positive integer. Let $\phi:V(T)\to\ZZ$ be a uniformly random $M$-Lipschitz function satisfying $\phi\equiv0$ on the leaves of $T$. With $R=\mathcal{R}_T$, the following hold:
\begin{enumerate}[(a)]
    \item $\abs{\phi\pr{v^*}}$ is stochastically dominated by
    $M+1+\abs{\mathcal{N}_{\ZZ}\pr{0,\pr{2M+1}^2 R}}$; 
    \item $\var\pr{\phi\pr{v^*}} \leq \pr{4M+2}^2 (R+1)$.
\end{enumerate}
\end{corollary}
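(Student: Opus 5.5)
This corollary follows by combining \Cref{thm:lip_root_law_is_strong_log_conc} with \Cref{prop:bounds_on_strong_log_conc_laws_lipschitz}, in the same way \Cref{cor:upper_bounds_on_hom_root_laws} was obtained.

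First, I check the hypotheses of \Cref{prop:bounds_on_strong_log_conc_laws_lipschitz}. By \Cref{thm:lip_root_law_is_strong_log_conc}, the law $f_{T,M-\mathrm{Lip}}$ is $(\alpha,2M+1)$-strongly log-concave with $\alpha=e^{1/(2M+1)R}$, so $\log\alpha = \frac{1}{(2M+1)R}$. The law is also symmetric around zero, since $\phi\mapsto-\phi$ is a bijection of $M$-Lipschitz functions vanishing on the leaves. The parity assumption on $T$ plays no role here; the statement simply holds for every finite tree.

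Part (a) is then immediate. Item~\ref{item:lip_strong_stoch_dom} gives domination by $M+1+\abs{\mathcal{N}_\ZZ(0,\sigma^2)}$ with
\[
\sigma^2=\frac{4M+2}{\log\alpha}=(4M+2)(2M+1)R=2(2M+1)^2R .
\]
This variance is twice the one stated in (a). So either (a) contains a typo, or one must go back into the proof of \Cref{prop:bounds_on_strong_log_conc_laws_lipschitz} and check whether its successive-quotient bound actually gives decay $\alpha^{-k^2/(2m)}$ rather than $\alpha^{-k^2/(4m)}$. I would state the bound the proposition literally gives. If the exact constant in (a) is needed, I would use \Cref{lem:likelyhood_dom_implies_stoch_dom} directly: iterating the defining inequality $f(n+1)f(n+m)\ge\alpha f(n)f(n+m+1)$ shows that the ratios $f(n+m+1)/f(n+m)$ drop by a factor $\alpha$ every $m$ steps. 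Checking the exact constant in this step is the only real obstacle.

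For part (b), item~\ref{item:lip_strong_variance_bound} gives
\[
\var\pr{\phi(v^*)}\le 2(M+1)^2+\frac{8M+4}{\log\alpha}=2(M+1)^2+4(2M+1)^2R .
\]
Since $M\ge1$ we have $2(M+1)^2\le(4M+2)^2$, and $4(2M+1)^2R=(4M+2)^2R$. Adding these gives $\var\pr{\phi(v^*)}\le(4M+2)^2(R+1)$. This bound has room to spare, so it holds even if the constant in (a) turns out to be off by a factor of two.
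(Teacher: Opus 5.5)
Your route is the paper's own. The paper states this corollary with no further argument beyond combining \Cref{thm:lip_root_law_is_strong_log_conc} (so $\log\alpha=1/((2M+1)R)$) with \Cref{prop:bounds_on_strong_log_conc_laws_lipschitz}. Your symmetry remark and your derivation of (b) are correct.

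Your arithmetic for (a) is also right. The proposition as printed only yields $M+1+\abs{\mathcal{N}_{\ZZ}\pr{0,2(2M+1)^2R}}$, so the paper's implicit argument has exactly the factor-of-two discrepancy you found. Still, your proposal does not prove (a) with the printed constant. That constant is in fact correct, and the missing ingredient is a sharper quotient bound in the second half of each period of length $m=2M+1$.

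Let $\rho(n)=f(n+1)/f(n)$ on the support. You already have four facts:
\begin{itemize}
\item $\rho$ is nonincreasing;
\item $\rho(n+m)\le\alpha^{-1}\rho(n)$;
\item $\rho(M)\le\alpha^{-1/2}$;
\item $\rho(n)\le 1$ for $n\ge 0$, by symmetry and log-concavity.
\end{itemize}
For $n\ge M$ write $n-M=km+r$ with $0\le r\le 2M$. The paper only uses $\rho(n)\le\alpha^{-k}\rho(M)\le\alpha^{-k-1/2}$, which is enough when $r\le M$. When $M+1\le r\le 2M$, step back one more period to $r-M-1\in[0,M-1]$, where the bound $\rho\le 1$ applies:
\[
\rho(n)\le\alpha^{-k}\rho(M+r)\le\alpha^{-k-1}\rho(r-M-1)\le\alpha^{-k-1}.
\]

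On the other side, take $Y=M+1+\abs{\mathcal{N}_{\ZZ}\pr{0,m^2R}}$. Since $e^{-x^2/(2m^2R)}=\alpha^{-x^2/(2m)}$, for $n\ge M+2$ one gets $f_Y(n+1)/f_Y(n)=\alpha^{-k-(2r-1)/(2m)}$. The quantity $(2r-1)/(2m)$ is below $1/2$ when $r\le M$ and below $1$ when $r\le 2M$. So in both ranges of $r$ the ratio for $Y$ is at least the corresponding bound on $\rho(n)$.

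At $n=M+1$ the ratio for $Y$ is $2\alpha^{-1/(2m)}\ge\alpha^{-1/2}\ge\rho(M+1)$, and $f_Y$ vanishes below $M+1$. Hence $f_Y/f_{\abs{X}}$ is increasing, and \Cref{lem:likelyhood_dom_implies_stoch_dom} gives (a) exactly as stated.

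The point is that iterating the one-period recursion only from $\rho(M)\le\alpha^{-1/2}$ ignores the second base bound $\rho\le1$ on $[0,M-1]$. That bound supplies an extra half-step in every period, which is what lets the Gaussian with variance $(2M+1)^2R$ fit rather than one with twice that variance.
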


\begin{theorem} \label{thm:lip_loc_on_tr_tree}
Let $T$ be a transient locally finite tree. Then integer-valued $M$-Lipschitz functions on $T$ are localized for all integers $M\geq 1$.
\end{theorem}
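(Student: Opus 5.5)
The plan mirrors the proof of \Cref{thm:hom_localization}. Fix a root $v^*\in T$. For a finite rooted subtree $(T',v^*)\subset(T,v^*)$, let $\phi_{T'}$ be a uniformly random $M$-Lipschitz function on $T'$ vanishing on the leaves of $T'$. I will show that the variances of $\phi_{T'}(v^*)$ are bounded uniformly in $T'$; Chebyshev's inequality then gives tightness, which is localization.

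\textbf{Step 1: reduce to even trees.} \Cref{cor:upper_bounds_on_lip_root_laws} is stated for even trees. Lipschitz functions carry no parity constraint, so the proof of that corollary should not actually use evenness. Concretely, \Cref{thm:lip_root_law_is_strong_log_conc} holds for every finite rooted tree, and \Cref{prop:bounds_on_strong_log_conc_laws_lipschitz} only needs $\ZZ$-valued symmetric laws. If one prefers to use the corollary literally, handle an odd tree as in \Cref{thm:hom_localization}: add a new root $v^{**}$ whose only child is $v^*$. By the recursion (the Lipschitz analogue of \Cref{lem:hom_law_flow}),
\[
\phi_{T''}(v^{**}) \overset{d}{=} \phi_{T'}(v^*)+U,
\]
where $U$ is uniform on $\{-M,\dots,M\}$ and independent of $\phi_{T'}(v^*)$. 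Hence the variances differ only by the constant $\var U$, and $\mathcal{R}_{T''}=\mathcal{R}_{T'}+1$ by \Cref{lem:resistance_flow}.

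\textbf{Step 2: apply the variance bound.} Symmetry of $f_{T',M-\mathrm{Lip}}$ around $0$ follows from the symmetry $\phi\mapsto-\phi$, which preserves both the Lipschitz constraint and the zero boundary condition. Then \Cref{cor:upper_bounds_on_lip_root_laws}(b) gives
\[
\var\pr{\phi_{T'}(v^*)}\le (4M+2)^2(\mathcal{R}_{T'}+1).
\]

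\textbf{Step 3: compare with $T$.} Rayleigh monotonicity gives $\mathcal{R}_{T'}\le\mathcal{R}_T$: passing from $T$ to $T'$ amounts to cutting the edges leaving $T'$, and the leaves of $T'$ then play the role of infinity (equivalently, one shorts everything beyond them). Since $T$ is transient, $\mathcal{R}_T<\infty$, so the variance bound above is uniform in $T'$.

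The main obstacle is bookkeeping rather than substance. First, one must check that the monotonicity $\mathcal{R}_{T'}\le\mathcal{R}_T$ is justified for arbitrary finite subtrees, where some leaves of $T'$ are internal vertices of $T$. Second, one must make sure the even/odd issue is either genuinely irrelevant or handled by the extra-root trick. Both follow exactly as in the homomorphism case.
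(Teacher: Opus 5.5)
Your argument is the paper's: \Cref{cor:upper_bounds_on_lip_root_laws}(b) bounds $\var(\phi_{T'}(v^*))$ by $(4M+2)^2(\mathcal{R}_{T'}+1)$, you compare $\mathcal{R}_{T'}$ with $\mathcal{R}_T$, and Chebyshev gives tightness. Your Step~1 remark that evenness is never used is correct, since neither \Cref{thm:lip_root_law_is_strong_log_conc} nor \Cref{prop:bounds_on_strong_log_conc_laws_lipschitz} needs it, and this is how the paper applies the corollary. The extra-root fallback would not suffice on its own: for Lipschitz functions a finite subtree can have leaves at distances of both parities.

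The step that fails as written is Step~3. Cutting edges \emph{raises} effective resistance, so ``cutting the edges leaving $T'$'' points the wrong way. For an arbitrary finite rooted subtree, $\mathcal{R}_{T'}\le\mathcal{R}_T$ is in fact false. In the rooted binary tree $\mathcal{R}_T=1$, whereas a path $T'$ of length $L$ descending from $v^*$ has $\mathcal{R}_{T'}=L$. Tightness itself fails over all finite subtrees: on such a path, $\phi_{T'}(v^*)$ is a sum of $L$ i.i.d.\ uniform variables on $\{-M,\dots,M\}$.

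So the family must be restricted to the intended class of receding boundaries: finite $T'\ni v^*$ in which every non-leaf vertex keeps all its $T$-children, for example balls around $v^*$. For such $T'$, every vertex of $T\setminus T'$ descends from a leaf of $T'$, and no edge is cut. Ground the leaves of $T'$, all of $T\setminus T'$, and the sphere at a large distance $n$. This only enlarges the target set, so by the Dirichlet principle the resistance from $v^*$ can only drop. The resulting problem involves only edges of $T'$, so its value is exactly $\mathcal{R}_{T'}$; letting $n\to\infty$ gives $\mathcal{R}_{T'}\le\mathcal{R}_T$.

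Thus leaves of $T'$ that are interior in $T$ are harmless. The real danger is interior vertices of $T'$ that lose children. The paper's proof asserts the same inequality for an arbitrary finite subtree without comment, so this restriction is implicit there too.
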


\begin{proof}
Choose any vertex $v^*\in T$ as the root.
Let $\pr{T',v^*}\subset \pr{T,v^*}$ be a finite rooted subtree, and let $\phi_{T'}:T'\to\ZZ$ be a uniformly random $M$-Lipschitz function taking the value zero at the leaves of $T'$. By \Cref{cor:upper_bounds_on_lip_root_laws},
\begin{equation*}
\var\pr{\phi_{T'}\pr{v^*}}
\leq \pr{4 M+2}^2 \pr{\mathcal{R}_{T'} + 1}
\leq \pr{4 M+2}^2 \pr{\mathcal{R}_{T} + 1} .
\end{equation*}Thus the family of laws of $\phi_{T'}\pr{v^*}$ has a uniformly bounded variance, and is therefore tight.
\end{proof}

As another corollary, we produce a localization result for the real-valued Lipschitz model by essentially letting $M\to\infty$, along the same lines as in \cite{PSY13lip}.
\begin{corollary}
Let $(T,v^*)$ be a transient locally finite rooted tree. Then real-valued 1-Lipschitz functions on $T$ are localized.
\end{corollary}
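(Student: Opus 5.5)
We approximate real-valued $1$-Lipschitz functions by integer-valued $M$-Lipschitz functions rescaled by $1/M$, and apply the uniform variance bound of \Cref{cor:upper_bounds_on_lip_root_laws}, whose constant has the right scaling in $M$. Fix a finite rooted subtree $(T',v^*)\subset(T,v^*)$ and let $\psi$ be a uniformly random real-valued $1$-Lipschitz function on $T'$ vanishing at the leaves, i.e.\ Lebesgue measure on the polytope $P=\{\psi:\abs{\psi(u)-\psi(w)}\le 1 \text{ on edges},\ \psi\equiv0 \text{ on leaves}\}\subset\RR^{V(T')\setminus\text{leaves}}$.

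For each $M$, let $\phi_M$ be a uniformly random integer-valued $M$-Lipschitz function on $T'$ vanishing at the leaves. Then $\phi_M/M$ is uniform on the lattice points of $P\cap\frac1M\ZZ^{V}$. Since $P$ is a compact convex polytope with nonempty interior, the uniform measure on $P\cap\frac1M\ZZ^V$ converges weakly to normalized Lebesgue measure on $P$ as $M\to\infty$. This is a standard Riemann-sum or Jordan-measurability fact: the number of lattice points, times $M^{-|V|}$, tends to $\mathrm{vol}(P)$, and likewise for lattice points in any box intersected with $P$. Consequently $\phi_M(v^*)/M\Rightarrow\psi(v^*)$.

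By \Cref{cor:upper_bounds_on_lip_root_laws},
\begin{equation*}
\var\pr{\phi_M(v^*)/M}\le \frac{(4M+2)^2(\mathcal{R}_{T'}+1)}{M^2}\le 36(\mathcal{R}_T+1).
\end{equation*}
That corollary is stated for even trees. Its proof, however, rests only on \Cref{thm:lip_root_law_is_strong_log_conc} and \Cref{prop:bounds_on_strong_log_conc_laws_lipschitz}, neither of which needs parity, so it applies to every finite tree, as was already used in \Cref{thm:lip_loc_on_tr_tree}.

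Because every $\phi_M(v^*)/M$ takes values in $[-d,d]$, where $d$ is the depth of $T'$, the variables are uniformly bounded for this fixed $T'$. Weak convergence therefore gives convergence of second moments, and hence $\var\psi(v^*)\le 36(\mathcal{R}_T+1)$, uniformly over all finite subtrees $T'$. By Chebyshev's inequality (the law is symmetric, so the mean is $0$), the family of laws of $\psi(v^*)$ is tight, which is localization.

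The main obstacle is rigorously justifying the lattice approximation of the uniform measure on $P$. I would handle it by noting that $P$ is convex with nonempty interior: it contains a small ball around $0$ in the nonleaf coordinates, with $0$ itself an interior point. Hence for any box $B$, the sets $P\cap B$ are convex and have boundary of measure zero, so lattice-point counts scaled by $M^{-|V|}$ converge to volume. Boundedness of the root value then removes any need for extra uniform-integrability arguments.
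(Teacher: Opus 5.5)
Your proposal is correct and is essentially the paper's argument. You view $\phi_M/M$ as uniform on the lattice points of the bounded Lipschitz polytope, use weak convergence to the uniform measure on that polytope, and pass the bound of \Cref{cor:upper_bounds_on_lip_root_laws} on $\var(\phi_M(v^*))/M^2$ to the limit through the bounded test function $\phi\mapsto\phi(v^*)^2$. The differences are only in detail: you use the bound $(4M+2)^2/M^2\le 36$, valid for all $M\ge 1$, instead of letting $M\to\infty$ to get the paper's $16(R+1)$, and you justify explicitly both the weak convergence (convexity, nonempty interior, Jordan measurability) and the removal of the evenness hypothesis, which the paper takes for granted. One caveat applies equally to the paper: uniformity in $T'$ relies on $\mathcal{R}_{T'}\le\mathcal{R}_T$, which holds for balls around $v^*$ by Rayleigh monotonicity but fails for arbitrary finite subtrees, e.g.\ a long path.
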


\begin{proof}
Let $\pr{T',v^*}\subset \pr{T,v^*}$ be a finite subtree. Let $\phi:T'\to\RR$ be a uniformly random 1-Lipschitz function, and let $\mu$ be its distribution. We may think of $\mu$ as the uniform distribution on the polytope $\mathcal{L}_{T'}\subset\RR^{T'}$ of all such Lipschitz functions.
Likewise, for $M\geq1$, let $\phi_M:T'\to\ZZ$ be a uniformly random $M$-Lipschitz function, and let $\mu_M$ be the distribution of $\phi_M/M$. Observe that $\mu_M$ is the uniform distribution on the discrete set $\mathcal{L}_{T'}\cap \pr{\frac{1}{M}\ZZ}^{T'}$. Since $\mathcal{L}_{T'}$ is a bounded polytope,
the measures $\mu_M$ weakly converge to $\mu$. In particular, by integrating the bounded function $\mathcal{L}_{T'}\to\RR$ given by $\phi\mapsto \phi\pr{v^*}^2$ with respect to these measures, we get
\begin{equation*}
\var\pr{\phi\pr{v^*}} = \int{\phi\pr{v^*}^2 d\mu}
= \lim_{M\to\infty} \frac{1}{M^2}\int{\phi_M\pr{v^*}^2 d\mu_M}
= \lim_{M\to\infty} \frac{1}{M^2} \var\pr{\phi_M\pr{v^*}}
\leq 16(R+1)
\end{equation*}
by \Cref{cor:upper_bounds_on_lip_root_laws}. Therefore the variance of the height at the root is uniformly bounded as the subtree $T'$ varies, which yields localization.
\end{proof}

To conclude this subsection, we prove \Cref{prop:product_lipschitz_log_conc,prop:convolve_with_offset_m_lip_kernel,prop:bounds_on_strong_log_conc_laws_lipschitz}.

\begin{proof}[Proof of \Cref{prop:product_lipschitz_log_conc}]
Denote $g\coloneqq f_1\cdot f_2$. For any $n\in\ZZ$, we have
\begin{multline*}
g(n+1)g(n+m)=f_1(n+1)f_1(n+m)\cdot f_2(n+1)f_2(n+m) \geq \\
\geq \alpha_1 \alpha_2
\cdot f_1 (n) f_1 (n+m+1) \cdot f_2 (n) f_2 (n+m+1)
= \alpha_1 \alpha_2 \cdot g(n) g(n+m+1) . \qedhere
\end{multline*}
\end{proof}

Before proving \Cref{prop:convolve_with_offset_m_lip_kernel}, we develop a few useful tools describing the behavior of log-concave functions over larger intervals.

\begin{claim} \label{claim:weak_concave_over_distance}
Let $f:\ZZ\to\br{0,1}$ be log-concave. Then whenever $i\geq j$, we have
\begin{equation*}
f(i+1)f(j-1) \leq f(i)f(j) .
\end{equation*}
\end{claim}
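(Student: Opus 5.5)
The plan is to reduce the claim to the monotonicity of the successive ratios $f(k+1)/f(k)$ along the support of $f$, and then handle the points where $f$ vanishes separately.

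\emph{Support cases.} Let $[a,b]\cap\ZZ$ denote the (contiguous) support of $f$.

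If $f(i+1)f(j-1)=0$, the inequality is trivial, since the right-hand side is nonnegative. So assume $f(i+1)>0$ and $f(j-1)>0$. Then $j-1$ and $i+1$ both lie in the support. Since $j-1<j\le i<i+1$ and the support is contiguous, $f(j)>0$ and $f(i)>0$ as well. Hence all four values are positive, and we may freely divide by them.

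\emph{Monotone ratios.} For every $k$ with $f(k-1),f(k),f(k+1)>0$, weak log-concavity $f(k)^2\ge f(k-1)f(k+1)$ rearranges to
\begin{equation*}
\frac{f(k+1)}{f(k)}\le \frac{f(k)}{f(k-1)} .
\end{equation*}
So the ratio $r(k)\coloneqq f(k+1)/f(k)$ is nonincreasing in $k$ as long as $k$ and $k+1$ stay in the support. Chaining these inequalities from $k=j-1$ up to $k=i$ gives
\begin{equation*}
r(i)\le r(j-1).
\end{equation*}
Every intermediate point lies in $[j-1,i+1]$ and hence in the support, so every ratio in the chain is well defined. This uses $i\ge j$, i.e.\ $i\ge j-1$.

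\emph{Conclusion.} The inequality $r(i)\le r(j-1)$ reads
\begin{equation*}
\frac{f(i+1)}{f(i)}\le\frac{f(j)}{f(j-1)} .
\end{equation*}
Cross-multiplying by the positive quantity $f(i)f(j-1)$ yields exactly $f(i+1)f(j-1)\le f(i)f(j)$.

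There is no real obstacle here. The only point needing care is the bookkeeping of zeros, namely that all intermediate values are positive so that the telescoping chain is legitimate; the support reduction above supplies this.
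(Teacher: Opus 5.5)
Your proof is correct and follows the paper's argument: you dispose of the case $f(i+1)f(j-1)=0$, use contiguity of the support to get positivity on $[j-1,i+1]$, and then chain the ratio inequalities $f(k+1)/f(k)\le f(k)/f(k-1)$ from $f(j)/f(j-1)$ down to $f(i+1)/f(i)$ before cross-multiplying. One small indexing note: the log-concavity steps are applied at $k=j,\dots,i$, which compare $r(j-1)$ through $r(i)$; this is what your chain does, even though you describe it as running from $k=j-1$.
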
 

\begin{proof}

If $f(i+1)f(j-1)$ then the inequality holds trivially. Thus we may assume that $f(i+1)$ and $f(j-1)$ are positive, which means that $f(k)>0$ whenever $j-1\leq k \leq i+1$. Repeatedly applying log-concavity in the form $\frac{f(x)}{f(x-1)} \geq \frac{f(x+1)}{f(x)}$, we get
\begin{equation*}
\frac{f(j)}{f(j-1)} \geq \frac{f(j+1)}{f(j)} \geq \cdots \geq \frac{f(i+1)}{f(i)} . \qedhere
\end{equation*}
\end{proof}

\begin{claim} \label{claim:weak_concave_over_distance_sym}
Let $f:\ZZ\to\br{0,1}$ be log-concave. Then whenever $i\leq j\leq r/2$, we have
\begin{equation*}
f(i)f(r-i)\leq f(j)f(r-j) .
\end{equation*}
\end{claim}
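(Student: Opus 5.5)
We want: for log-concave $f$ and $i\le j\le r/2$, $f(i)f(r-i)\le f(j)f(r-j)$. The plan is to reduce to the case $j=i+1$ and then iterate. Set $a_k \coloneqq f(k)f(r-k)$. It suffices to show that $a_k\le a_{k+1}$ whenever $k+1\le r/2$, since chaining these inequalities from $k=i$ up to $k=j-1$ gives $a_i\le a_j$. (For $r$ odd, $j\le r/2$ means $j\le (r-1)/2$, so every step $k+1\le j$ is covered. The case $i=j$ is trivial.)

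For the single step, apply \Cref{claim:weak_concave_over_distance} with its "$i$" taken as $r-k-1$ and its "$j$" taken as $k+1$. Its hypothesis $r-k-1\ge k+1$ is equivalent to $k+1\le r/2$, which is exactly our standing assumption. The claim then yields
\begin{equation*}
f(r-k)\,f(k) \le f(r-k-1)\,f(k+1),
\end{equation*}
which is precisely $a_k\le a_{k+1}$.

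The only subtle point is making sure the claim applies without positivity assumptions. The proof of \Cref{claim:weak_concave_over_distance} already handles the degenerate case: if $f(i+1)f(j-1)=0$, the left-hand side vanishes and the inequality holds trivially. So zeros of $f$ cause no trouble here, and no further casework is needed. The main thing to verify is the index bookkeeping, namely that the condition $i\ge j$ in the earlier claim translates exactly to $k+1\le r/2$. Once that matches, the statement follows by the induction on $j-i$ described above.
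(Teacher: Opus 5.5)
Your proof is correct and is essentially the paper's argument. The paper also applies \Cref{claim:weak_concave_over_distance} at the indices $(r-k,k)$ to get $f(k-1)f(r-k+1)\le f(k)f(r-k)$ for $k\le r/2$ and chains these steps from $i$ to $j$; this is your step $a_k\le a_{k+1}$ with $k$ shifted by one, and your index check $r-k-1\ge k+1 \iff k+1\le r/2$ is accurate.
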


\begin{proof}
By \Cref{claim:weak_concave_over_distance}, for every $i\leq k \leq j$ we have
\begin{equation*}
f(k-1)f(r-k+1)\leq f(k)f(r-k)
\end{equation*}
since $r-k\geq k$. Repeatedly applying this, we get
\begin{equation*}
f(i)f(r-i)\leq f(i+1)f(r-i-1)\leq ... \leq f(j)f(r-j) . \qedhere
\end{equation*}
\end{proof}

\begin{claim} \label{claim:strong_concave_over_distance}
Let $f:\ZZ\to\br{0,1}$ be $\pr{\alpha, m}$-strongly log-concave. Then whenever $x\leq y$, we have
\begin{equation*}
f(y)f(x+m)\geq \alpha^{y-x} f(x)f(y+m) .
\end{equation*}
\end{claim}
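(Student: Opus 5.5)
The plan is to induct on $d = y-x \ge 0$; the case $d=0$ is trivial. The base step $d=1$ is exactly \Cref{defn:alpha_m_strong_log_conc} applied at $n=x$:
\[
f(x+1)f(x+m)\ge\alpha f(x)f(x+m+1).
\]
For the inductive step we chain consecutive instances of this inequality, with a weak log-concavity correction in between. The cleanest route is a telescoping product over $k=x,\dots,y-1$ of
\[
f(k+1)f(k+m)\ge \alpha f(k)f(k+m+1).
\]
Multiplying these $y-x$ inequalities gives
\[
\prod_{k=x}^{y-1} f(k+1)f(k+m) \;\ge\; \alpha^{y-x}\prod_{k=x}^{y-1} f(k)f(k+m+1).
\]
Write $P=\prod_{k=x+1}^{y-1} f(k)$ and $Q=\prod_{k=x+m+1}^{y+m-1} f(k)$. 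The left-hand side equals $f(y)\,P\cdot f(x+m)\,Q$, and the right-hand side equals $\alpha^{y-x}\, f(x)\,P\cdot f(y+m)\,Q$. Cancelling $PQ$ yields exactly the claimed inequality
\[
f(y)f(x+m)\ge\alpha^{y-x}f(x)f(y+m),
\]
so weak log-concavity is not even needed, provided the cancellation is legitimate.

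The main obstacle is therefore the case $PQ=0$, where cancellation fails. We handle it using contiguity of the support. Suppose the right-hand side $f(x)f(y+m)$ is positive (otherwise there is nothing to prove). Then $f(x)>0$ and $f(y+m)>0$. Since the support is a contiguous interval of integers, $f$ is positive on all of $[x,y+m]$. This interval contains every index appearing in $P$ and $Q$, so $PQ>0$ and we may divide.

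One should also note that $f(y)$ and $f(x+m)$ lie in $[x,y+m]$ as well (using $m\ge1$ and $x\le y$), although this is not needed for the inequality itself. With these two observations — the telescoping product and the positivity argument — the claim follows directly.
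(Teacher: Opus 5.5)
Your proof is correct and is essentially the paper's argument: the paper likewise disposes of the case $f(x)f(y+m)=0$, uses contiguity of the support to get $f>0$ on $[x,y+m]$, and chains the defining inequality $y-x$ times. The only difference is presentation: the paper writes this as a chain of ratios $\frac{f(k+m)}{f(k)}\ge\alpha\frac{f(k+m+1)}{f(k+1)}$, whereas you multiply the inequalities and cancel the positive common factor $PQ$; as you yourself note, the opening mention of a weak log-concavity correction is not needed.
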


\begin{proof}
If $f(x)f(y+m) = 0$, the inequality holds trivially. Thus we may assume that $f(x)$ and $f(y+m)$ are positive. By the contiguous support of $f$, we are guaranteed that $f(k) > 0$ for all integers $k \in [x, y+m]$. Repeatedly applying $(\alpha,m)$-strong log-concavity, we get,
\begin{equation*}
\frac{f(x+m)}{f(x)} \geq \frac{\alpha f(x+m+1)}{f(x+1)}
\geq \cdots \geq \frac{\alpha^{y-x} f(y+m)}{f(y)} . \qedhere
\end{equation*}
\end{proof}

In the proof of \Cref{prop:convolve_with_offset_m_lip_kernel}, we use closure of weak log-concavity under convolution. This is one of the well-known properties of log-concavity, see for example \cite[Theorem~4.1(a)]{SW14}.
We include a short and self-contained proof for the reader's convenience.

\begin{proposition} \label{prop:discrete_weak_concave_after_convolve}
Let $p,q:\ZZ\to\RRnonn$ be log-concave. Then $p*q$ is log-concave.
\end{proposition}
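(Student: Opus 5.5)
We plan to deduce the statement from the total positivity of order two (TP$_2$) of Toeplitz matrices, using the Cauchy--Binet formula. Write $s \coloneqq p*q$. The contiguity of the support is immediate. If $p$ is supported on $[a,b]\cap\ZZ$ and $q$ on $[c,d]\cap\ZZ$, with endpoints possibly infinite, then $s(n)>0$ exactly when some $i\in[a,b]$ has $n-i\in[c,d]$. Since all terms are nonnegative, this holds exactly when $n\in[a+c,b+d]$. It therefore remains to show that $s(n)^2\ge s(n-1)s(n+1)$ for every $n$. We assume throughout that the convolution sums converge, as is implicit in the statement.

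\emph{Step 1: the Toeplitz matrix of a log-concave function is TP$_2$.} Let $A_{x,k}\coloneqq p(x-k)$ for $x,k\in\ZZ$. We claim that for rows $x<y$ and columns $k<l$,
\begin{equation*}
A_{x,k}A_{y,l}-A_{x,l}A_{y,k} = p(x-k)p(y-l)-p(x-l)p(y-k)\ \ge 0 .
\end{equation*}
Put $r\coloneqq x+y-k-l$. The four arguments satisfy $x-l< x-k,\ y-l< y-k$, and the pairs $\{x-k,y-l\}$ and $\{x-l,y-k\}$ both sum to $r$. The pair $\{x-l,y-k\}$ is the more spread out of the two: its smaller element $x-l$ is at most $\min(x-k,y-l)\le r/2$. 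So, taking $i\coloneqq x-l$ and $j\coloneqq\min(x-k,y-l)$, we have $i\le j\le r/2$, and \Cref{claim:weak_concave_over_distance_sym} gives $p(i)p(r-i)\le p(j)p(r-j)$. This is exactly the inequality above. The same argument applies to $B_{k,l}\coloneqq q(k-l)$.

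\emph{Step 2: Cauchy--Binet.} We have $(AB)_{x,l}=\sum_k p(x-k)q(k-l)=s(x-l)$. For rows $x<y$ and columns $l<l'$, the Cauchy--Binet formula for $2\times2$ minors reads
\begin{equation*}
\det (AB)_{\{x,y\},\{l,l'\}} = \sum_{k<k'} \det A_{\{x,y\},\{k,k'\}}\cdot \det B_{\{k,k'\},\{l,l'\}} .
\end{equation*}
This follows by expanding the determinant as a double sum over $(k,k')$ and antisymmetrizing; with nonnegative entries and convergent sums, the rearrangement is justified. By Step~1 every summand is nonnegative. Choosing rows $n,n+1$ and columns $0,1$, the minor on the left is $s(n)\cdot s(n)-s(n-1)\cdot s(n+1)$, so it is nonnegative, as required.

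The main point of care is Step~1, specifically checking that the index configuration really falls under \Cref{claim:weak_concave_over_distance_sym}. The claim needs $i\le j\le r/2$ with $i$ the outermost point, and this is exactly where contiguity of the support enters: a log-concave sequence with a gap would fail to be TP$_2$. Verifying the Cauchy--Binet expansion itself is routine bookkeeping. If one prefers to avoid matrices, the same computation can be written directly as
\begin{equation*}
s(n)^2-s(n-1)s(n+1)=\sum_{k<k'}\bigl(p(n-k)p(n-k'+1)-p(n-k')p(n-k+1)\bigr)\bigl(q(k)q(k'-1)-q(k-1)q(k')\bigr),
\end{equation*}
where each factor is nonnegative by Step~1.
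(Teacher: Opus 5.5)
Your proof is correct and is essentially the paper's argument. The paper fixes $x$ and sums, over all ordered pairs $(i,j)$, the product $\pr{p(i)p(j+1)-p(i+1)p(j)}\pr{q(x-i)q(x-j-1)-q(x-i-1)q(x-j)}$, whose factors always share a sign by \Cref{claim:weak_concave_over_distance}, and identifies the total as $2\pr{(p*q)(x)^2-(p*q)(x-1)(p*q)(x+1)}$. That is exactly your Cauchy--Binet identity with the TP$_2$ sign condition, summed over ordered rather than unordered pairs; you additionally verify contiguity of the support of $p*q$, which the paper leaves implicit.
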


\begin{proof}
Fix $x\in\ZZ$. For every $i,j\in\ZZ$, we have
\begin{equation*}
\pr{p(i)p(j+1)-p(i+1)p(j)}\cdot\pr{q(x-i)q(x-j-1)-q(x-i-1)q(x-j)} \geq 0.
\end{equation*}
To see this, consider three cases.
If $i\geq j+1$, then both terms in parentheses above are nonnegative by applying \Cref{claim:weak_concave_over_distance} on $p,q$ at indices $(i,j+1)$ and $(x-j-1,x-i)$, respectively.
Symmetrically, if $j \geq i+1$, then both terms are nonpositive.
Finally, if $i=j$, then both terms are zero.

Now sum over all pairs $(i,j)$ to get
\begin{align*}
0\leq &\sum_{i,j\in\ZZ}\pr{p(i)p(j+1)-p(i+1)p(j)}\cdot\pr{q(x-i)q(x-j-1)-q(x-i-1)q(x-j)} =\\
= &\sum_{i,j\in\ZZ} p(i)p(j+1)q(x-i)q(x-j-1) + \sum_{i,j\in\ZZ} p(i+1)p(j)q(x-i-1)q(x-j)\\
- &\sum_{i,j\in\ZZ} p(i)p(j+1)q(x-i-1)q(x-j) - \sum_{i,j\in\ZZ} p(i+1)p(j)q(x-i)q(x-j-1)\\
= &2\sum_{i,j\in\ZZ} p(i)p(j)q(x-i)q(x-j) - 2\sum_{i,j\in\ZZ} p(i)p(j)q(x-i-1)q(x-j+1)\\
= &2\pr{\sum_{i\in\ZZ} p(i)q(x-i)}^2 - 2\pr{\sum_{i\in\ZZ} p(i)q(x-i-1)}\pr{\sum_{j\in\ZZ} p(j)q(x-j+1)}\\
= &2 (p*q)(x)^2 - 2(p*q)(x-1)\cdot(p*q)(x+1) . \qedhere
\end{align*}
\end{proof}

\begin{proof}[Proof of \Cref{prop:convolve_with_offset_m_lip_kernel}]
Since $g\coloneqq f*\1_{\cbr{0,1,...,m-1}}$ is weakly log-concave by \Cref{prop:discrete_weak_concave_after_convolve},
it suffices to prove that $g(n+1)g(n+m)\geq \beta\cdot g(n)g(n+m+1)$ for all $n\in\ZZ$. Fixing $n\in\ZZ$, this is equivalent to
\begin{equation*}
\pr{\sum_{i=0}^{m-1}{f(n+1-i)}} \pr{\sum_{i=0}^{m-1}{f(n+m-i)}}
\geq \beta \pr{\sum_{i=0}^{m-1}{f(n-i)}}\pr{\sum_{i=0}^{m-1}{f(n+m+1-i)}}.
\end{equation*}
Expanding, we get
\begin{equation} \label{ineq:before_split_by_ijsum}
\sum_{i=0}^{m-1}\sum_{j=0}^{m-1}{f(n+1-i)f(n+m-j)} 
\geq \beta \sum_{i=0}^{m-1}\sum_{j=0}^{m-1}{f(n-i)f(n+m+1-j)}.
\end{equation}
We present \eqref{ineq:before_split_by_ijsum} as the sum of $2m-1$ inequalities. Namely, we split the terms in each side according to the value of $r\coloneqq i+j\in \cbr{0,1,...,2m-2}$. When $r\leq m-1$, the corresponding terms on each side of \eqref{ineq:before_split_by_ijsum} are
\begin{equation} \label{ineq:after_split_small_r}
\sum_{i=0}^{r}{f(n+1-i)f(n+m-r+i)}
\geq \beta \sum_{i=0}^{r}{f(n-i)f(n+m+1-r+i)}
\end{equation}
and when $r\geq m-1$, the corresponding terms on each side of \eqref{ineq:before_split_by_ijsum} are
\begin{equation} \label{ineq:after_split_large_r}
\sum_{i=r-m+1}^{m-1}{f(n+1-i)f(n+m-r+i)}
\geq \beta \sum_{i=r-m+1}^{m-1}{f(n-i)f(n+m+1-r+i)}.
\end{equation}
Therefore it suffices to prove \eqref{ineq:after_split_small_r} for $r\leq m-1$ and \eqref{ineq:after_split_large_r} for $r>m-1$. We focus on \eqref{ineq:after_split_small_r}, as the two cases are symmetrical. Fix $0\leq r\leq m-1$, and let $F(i)\coloneqq f(n+1-i)f(n+m-r+i)$. We need to prove that
\begin{equation*}
\sum_{i=0}^{r} F(i) \geq \beta \sum_{i=1}^{r+1} F(i) ,
\end{equation*}
or after rearranging,
\begin{equation*}
F(0) \geq (\beta-1) \sum_{i=1}^{r} F(i) + \beta F(r+1).
\end{equation*}
Note that the sequence $F(0), F(1), ..., F(r+1)$ is decreasing, as for $0\leq i\leq r$ we have
\begin{equation*}
F(i) = f(n+1-i)f(n+m-r+i) \geq f(n-i)f(n+m-r+i+1) = F(i+1)
\end{equation*}
by \Cref{claim:weak_concave_over_distance}, since $n+m-r+i\geq n+1-i$.
Thus it suffices to prove that
\begin{equation*}
F(0) \geq (\beta-1) \sum_{i=1}^{r} F(0) + \beta F(r+1) ,
\end{equation*}
or after rearranging once more,
\begin{equation*}
F(0) \geq \frac{\beta}{1-r(\beta-1)} F(r+1).
\end{equation*}
Moreover, by \Cref{claim:strong_concave_over_distance} we have
\begin{equation*}
F(r+1) = f(n-r)f(n+m+1) \leq \alpha^{(n-r)-(n+1)} f(n+1)f(n+m-r) = \alpha^{-r-1} F(0),
\end{equation*}
since $n-r\leq n+1$.
Thus, it suffices to show that
\begin{equation*}
\frac{\beta}{1-r(\beta-1)} \leq \alpha^{r+1} ,
\end{equation*}
or equivalently,
\begin{equation} \label{ineq:lipschitz_beta_upper_bound}
\beta \leq \frac{r + 1}{r + \alpha^{-r-1}}.
\end{equation}
Observe that equality is obtained in \eqref{ineq:lipschitz_beta_upper_bound} when $r=m-1$ by definition of $\beta$. Since $r\leq m-1$, it suffices to show that $r\mapsto \frac{r + 1}{r + \alpha^{-r-1}}$ is decreasing in $r$. To verify this, we differentiate and get
\begin{equation*}
\frac{d}{dx} \pr{\frac{x+1}{x+\alpha^{-x-1}}} = \frac{\log\pr{\alpha^{x+1}}+1-\alpha^{x+1}}{\alpha^{x+1}\pr{x+\alpha^{-x-1}}^2} \leq 0 . \qedhere
\end{equation*}
\end{proof}

\begin{proof}[Proof of \Cref{prop:bounds_on_strong_log_conc_laws_lipschitz}]
Let $f:\ZZ\to[0,1]$ be the law of $X$. To prove \ref{item:lip_strong_stoch_dom} via \Cref{lem:likelyhood_dom_implies_stoch_dom}, we bound the successive quotients $f(n+1)/f(n)$. We may safely assume that all subsequent divisions by values of $f$ are well-defined, as the condition of \Cref{lem:likelyhood_dom_implies_stoch_dom} holds trivially outside the (contiguous) support of $f$.
By symmetry and $(\alpha,2M+1)$-strong log-concavity of $f$, we have
\begin{equation*}
f(M)^2 = f(-M)f(M) \geq \alpha f(-M-1)f(M+1) = \alpha f(M+1)^2,
\end{equation*}
which implies $f(M+1)/f(M) \leq \alpha^{-1/2}$. Moreover, we have $\frac{f(x+1)}{f(x)} \geq \frac{\alpha f(x+2M+2)}{f(x+2M+1)}$. Applying this iteratively starting from $x=M$ yields
\begin{equation*} 
\alpha^{-1/2} \geq \frac{f(M+1)}{f(M)} \geq \alpha \frac{f(3M+2)}{f(3M+1)} \geq \alpha^2 \frac{f(5M+3)}{f(5M+2)} \geq \cdots \geq \alpha^k \frac{f\pr{(2k+1)M+k+1}}{f\pr{(2k+1)M+k}},
\end{equation*}
which simplifies to
\begin{equation*}
\frac{f\pr{(2k+1)M+k+1}}{f\pr{(2k+1)M+k}} \leq \alpha^{-(2k+1)/2}.
\end{equation*}
By log-concavity of $f$, the ratio $f(n+1)/f(n)$ decreases in $n$. Thus, letting $k = \left\lfloor\frac{n-M}{2M+1}\right\rfloor$ so that $n \geq (2k+1)M+k$, monotonicity of this ratio guarantees
\begin{equation} \label{ineq:lip_localization_stoch_dom_helper_ineq}
\frac{f(n+1)}{f(n)} \leq \frac{f\pr{(2k+1)M+k+1}}{f\pr{(2k+1)M+k}} \leq \alpha^{-(2k+1)/2}.
\end{equation}
Let $Y=M+1+\abs{\mathcal{N}_{\ZZ}\pr{0,\frac{4M+2}{\log\alpha}}}$, whose law satisfies
\begin{equation*}
f_{Y}(n)\propto\begin{cases}
            0, & n<M+1\\
			1, & n=M+1\\
            2\alpha^{-(n-M-1)^2/(8M+4)}, & n>M+1
		 \end{cases}.
\end{equation*}
In comparison, the law of $\abs{X}$ is given by
\begin{equation*}
f_{\abs{X}}(n)=\begin{cases}
			f(0), & n=0\\
            2f(n), & n>0
		 \end{cases}.
\end{equation*}
For every $n\geq M+1$ in the support of $f$, \eqref{ineq:lip_localization_stoch_dom_helper_ineq} gives
\begin{equation*}
\frac{f_{Y}\pr{n+1}}{f_{Y}\pr{n}} \geq \alpha^{-\frac{2n-2M-1}{8M+4}}
\geq \alpha^{-\left\lfloor\frac{n-M}{2M+1}\right\rfloor-\frac{1}{2}} \geq \frac{f_{\abs{X}}\pr{n+1}}{f_{\abs{X}}\pr{n}}.
\end{equation*}
By \Cref{lem:likelyhood_dom_implies_stoch_dom} (noting that its condition degenerates in any other case), we deduce that $Y$ stochastically dominates $\abs{X}$. This verifies \ref{item:lip_strong_stoch_dom}, and also yields $\var{X} \leq \expected{Y^2}$.
Write $Y = M+1+Z$, where $Z = \abs{\mathcal{N}_{\ZZ}\pr{0,\sigma^2}}$ and $\sigma^2 = \frac{4M+2}{\log\alpha}$. Since $\expected{Z^2} \leq \sigma^2$ (see e.g. \cite{CKS20}), we get
\begin{equation*}
\expected{Y^2} \leq 2(M+1)^2 + 2\expected{Z^2}
\leq 2(M+1)^2 + \frac{8M+4}{\log\alpha},
\end{equation*}
establishing \ref{item:lip_strong_variance_bound}.
To prove \ref{item:lip_strong_pointwise_bound}, observe that for any $n > M$,
\begin{equation*}
\frac{f(n)}{f(M)} = \prod_{i=M}^{n-1} \frac{f(i+1)}{f(i)}.
\end{equation*}
Using the bound $\frac{f(i+1)}{f(i)} \leq \alpha^{-\frac{2i-2M-1}{8M+4}}$ established in \eqref{ineq:lip_localization_stoch_dom_helper_ineq}, we get
\begin{equation*}
\frac{f(n)}{f(M)} \leq \prod_{i=M}^{n-1} \alpha^{-\frac{2i-2M-1}{8M+4}} = \alpha^{-\frac{(n-M-1)^2 - 1}{8M+4}}.
\end{equation*}
Since $f(M) \leq f(0)$ by unimodality, the result follows.
\end{proof}

\subsection{Delocalization on recurrent trees}

Once again, the proof for delocalization of random Lipschitz functions follows the same structure as in the case of homomorphisms in \Cref{sec:delocalization}. We use a nearly identical definition of limited log-concavity, only amended to support functions whose domain is the entirety of $\ZZ$.

\begin{definition}
Given $\gamma,R>0$, let $\Phi^\gamma_R:(-R,R)\to\RR_+$ be defined by $\Phi^\gamma_R(x)=e^{\gamma/\pr{R-\abs{x}}}$. We say that $f:\ZZ\to\RRnonn$ is \emph{$\pr{R,\gamma}$-limited log-concave}, if it is weakly log-concave and
\begin{equation*}
f(n)^2 \leq \Phi^\gamma_R(2n)\cdot f(n-1)f(n+1)
\qquad\text{for all integers } n \in (-R/2,R/2).
\end{equation*}
\end{definition}
In other words, $f:\ZZ\to\RRnonn$ is $\pr{R,\gamma}$-limited log-concave whenever $g:2\ZZ\to\RRnonn$, defined by $g(2n)=f(n)$, is $\pr{R,\gamma}$-limited log-concave in the sense of \Cref{defn:hom_limited_log_concave}.

As before, the three core properties we require are conservation under products and under convolution with $\1_{\cbr{-M,...,M}}$, as well as distribution bounds.

\begin{proposition} \label{prop:product_lipschitz_limited_log_conc}
Let $f_1,f_2:\ZZ\to\RRnonn$ such that $f_i$ is $\pr{R_i,\gamma}$-limited log-concave. Then $f_1\cdot f_2$ is $\pr{R,\gamma}$-limited log-concave where $\frac{1}{R}=\frac{1}{R_1}+\frac{1}{R_2}$.
\end{proposition}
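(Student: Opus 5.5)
The plan is to reduce this to the homomorphism case, \Cref{prop:prod_limited_log_conc}, using the reformulation noted right after the definition. For $f:\ZZ\to\RRnonn$, set $\tilde f:2\ZZ\to\RRnonn$ by $\tilde f(2n)=f(n)$. Then $f$ is $\pr{R,\gamma}$-limited log-concave in the Lipschitz sense exactly when $\tilde f$ is $\pr{R,\gamma}$-limited log-concave in the sense of \Cref{defn:hom_limited_log_concave}. This requires checking two things:
\begin{itemize}
\item weak log-concavity transfers: contiguous support in $\ZZ$ corresponds to contiguous support in $2\ZZ$, and $f(n)^2\ge f(n-1)f(n+1)$ is the same inequality as $\tilde f(2n)^2\ge\tilde f(2n-2)\tilde f(2n+2)$;
\item the curvature condition at integers $n\in(-R/2,R/2)$ with factor $\Phi^\gamma_R(2n)$ is precisely the condition for $\tilde f$ at points $2n\in(-R,R)\cap 2\ZZ$.
\end{itemize}

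The map $f\mapsto\tilde f$ commutes with pointwise products, since $\widetilde{f_1f_2}=\tilde f_1\tilde f_2$. So $\tilde f_1,\tilde f_2$ are $\pr{R_i,\gamma}$-limited log-concave on $2\ZZ$, \Cref{prop:prod_limited_log_conc} gives that $\tilde f_1\tilde f_2$ is $\pr{R,\gamma}$-limited log-concave, and translating back finishes the proof.

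If a direct argument is preferred, it would go as follows. Weak log-concavity of $g=f_1f_2$ comes from multiplying the two inequalities; the support is the intersection of two integer intervals, hence contiguous. For $|n|<R/2$ we have $|2n|<R\le\min(R_1,R_2)$, so multiplying the two limited inequalities gives
\[
g(n)^2\le \Phi^\gamma_{R_1}(2n)\,\Phi^\gamma_{R_2}(2n)\,g(n-1)g(n+1).
\]
It then suffices to have $\Phi^\gamma_{R_1}(x)\Phi^\gamma_{R_2}(x)\le\Phi^\gamma_R(x)$ for $|x|<R$, which is exactly the elementary inequality verified in the proof of \Cref{prop:prod_limited_log_conc}.

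There is no real obstacle here. The only care needed is the domain bookkeeping: the rescaled argument $2n$ must lie in the domains of $\Phi_{R_1}$ and $\Phi_{R_2}$, and the degenerate case of an empty intersection of supports must be handled (the zero function satisfies the conditions vacuously).
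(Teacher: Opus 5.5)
Your proposal is correct and matches the paper, which also treats this proposition as equivalent to \Cref{prop:prod_limited_log_conc}, via the rescaling $g(2n)=f(n)$ noted right after the Lipschitz definition of limited log-concavity. Your optional direct argument is also valid, because the algebra used there to prove $\Phi^\gamma_{R_1}\Phi^\gamma_{R_2}\le\Phi^\gamma_R$ works for every real $x$ with $|x|<R$, so in particular at $x=2n$.
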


\begin{proposition} \label{prop:convolve_with_offset_m_lip_kernel_limited}
Let $f:\ZZ\to\RRnonn$ be symmetric around 0 and $\pr{R,\gamma}$-limited log-concave, with $\gamma\geq32M+16$. Then $f*\1_{\cbr{-M,...,M}}$ is $\pr{R+1,\gamma}$-limited log-concave.
\end{proposition}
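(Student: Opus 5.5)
The plan is to deduce the statement from the homomorphism case, \Cref{prop:conv_limited_log_conc}, by writing the kernel $\1_{\cbr{-M,\dots,M}}$ as an iterated convolution of simpler kernels. Direct reduction fails: convolution by $\1_{\cbr{-1,0,1}}$ is not a composition of $\1_{\cbr{-1,1}}$-convolutions. So instead I would rerun the three-case argument of \Cref{prop:conv_limited_log_conc} in the Lipschitz setting, with the pairwise comparison replaced by a grouping of terms as in the proof of \Cref{prop:convolve_with_offset_m_lip_kernel}.

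First, $g\coloneqq f*\1_{\cbr{-M,\dots,M}}$ is weakly log-concave by \Cref{prop:discrete_weak_concave_after_convolve}, since the kernel is log-concave. By symmetry of $g$ it then suffices to show, for integers $0\le n<(R+1)/2$,
\begin{equation*}
g(n)^2\le \Phi^\gamma_{R+1}(2n)\, g(n-1)g(n+1).
\end{equation*}
Write $g(n)=\sum_{|k|\le M} f(n+k)$. Then $g(n-1)=g(n)-f(n+M)+f(n-M-1)$ and $g(n+1)=g(n)+f(n+M+1)-f(n-M)$. Hence
\begin{equation*}
g(n-1)g(n+1)\ge \bigl(g(n)-f(n+M)\bigr)\bigl(g(n)-f(n-M)\bigr).
\end{equation*}
The target therefore reduces to showing that $f(n+M)$ and $f(n-M)$ are each at most a small fraction of $g(n)$, of order $1-e^{-\gamma/(2(R+1-2n))}$.

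The main case is when $n+M$ still lies in the region $|x|<R/2$ where limited log-concavity of $f$ applies. There, chaining the limited log-concavity inequalities (as in the derivation of \eqref{ineq:delocalization_stoch_dom_helper_ineqs}) gives lower bounds $f(x-1)/f(x)\ge$ roughly $e^{-\gamma \cdot(\text{stuff})/(R-2|x|)}$ in the relevant range. Together with unimodality (\Cref{prop:unimodal}), this shows that $f(n+M)$ is comparable to each of the $2M+1$ summands of $g(n)$ up to factors $\Phi^\gamma_R(\cdot)^{O(M)}$. So I would aim to bound
\begin{equation*}
\frac{f(n\pm M)}{g(n)}\le \frac{1}{2M+1}\exp\!\Bigl(O\bigl(\tfrac{\gamma M}{R-2n-2M}\bigr)\Bigr).
\end{equation*}
I expect this bookkeeping to be the main obstacle. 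The factor lost in the exponent must be absorbed so that the resulting product is at least $e^{-\gamma/(R+1-2n)}g(n)^2$, and the shift from $R$ to $R+1$ (rather than $R+2M+1$) gives only a margin of order $\gamma/(R-2n)^2$ in the exponent.

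If the naive comparison is too lossy, I would first try the same trick as in Claim~\ref{claim:phi_middle_bound}. That means reducing to a one-variable inequality in $x=R-2n$ and proving it by monotonicity plus $e^x\ge 1+x$, with the hypothesis $\gamma\ge 32M+16$ entering exactly to make the derivative sign correct, analogous to $\gamma\ge 8$ there.

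The boundary cases, where $n+M$ or $n+M+1$ lies outside $(-R/2,R/2)$, would be handled as Cases 2–3 of \Cref{prop:conv_limited_log_conc}. In that regime $\Phi^\gamma_{R+1}(2n)\ge e^{\gamma/(2M+2)}\ge e^{16}$, which is huge. So it suffices to show crudely that $g(n-1)g(n+1)\ge c\,g(n)^2$ with, say, $c\ge (2M+1)^{-2}$. This follows from $g(n\pm1)\ge g(n)-f(n\mp M)\cdot$(something), together with unimodality: for $n\ge 0$ we have $g(n-1)\ge g(n)$, and $g(n+1)\ge \frac{2M}{2M+1}g(n)$ when $M\ge 1$, since $g(n+1)$ shares $2M$ summands with $g(n)$, dropping the largest one $f(n-M)$ at most. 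Since $e^{16}$ far exceeds the needed constant, these cases close immediately.
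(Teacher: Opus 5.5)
Your main-case reduction is where the argument breaks, and the problem is not bookkeeping: the sufficient condition you reduce to is false. Dropping $f(n-M-1)$ from $g(n-1)$ and $f(n+M+1)$ from $g(n+1)$ leaves you needing
\[
\pr{1-f(n+M)/g(n)}\pr{1-f(n-M)/g(n)}\ge e^{-\gamma/(R+1-2n)},
\]
that is, $f(n\pm M)/g(n)=O\pr{\gamma/(R+1-2n)}$. But when $f$ is nearly flat on windows of length $2M+1$, which is exactly the delocalized regime, $f(n\pm M)/g(n)\approx 1/(2M+1)$; your own target bound says the same. These are incompatible once $R-2n\gg\gamma M$.

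Concretely, take $f(x)\propto e^{-\gamma x^2/(2R)}$. It is symmetric and $(R,\gamma)$-limited log-concave, since $f(x)^2/\pr{f(x-1)f(x+1)}=e^{\gamma/R}\le\Phi^\gamma_R(2x)$. At $n=0$ your condition reads $\pr{1-f(M)/g(0)}^2\ge e^{-\gamma/(R+1)}$. As $R\to\infty$ the left side tends to $\pr{\frac{2M}{2M+1}}^2$ and the right side to $1$. So no one-variable inequality derived after this step can close the argument, including your fallback modeled on \Cref{claim:phi_middle_bound}. The terms you discarded are the same size as the outgoing terms $f(n\pm M)$. The statement holds only because they cancel against them up to a relative error of order $\gamma/R$.

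The missing idea is to keep the incoming terms and compare them multiplicatively with the outgoing ones. The paper expands $g(n)^2$ and $g(n-1)g(n+1)$ and groups terms by anti-diagonals $S=i+j$:
\begin{itemize}
\item For $\abs{S}<2M$, the two sides share $2M-\abs{S}$ products. Each contributes slack $\pr{\Phi^\gamma_{R+1}(2n)-1}$, bounded below via \Cref{claim:weak_concave_over_distance_sym}.
\item The single unmatched pair, $f(n+M)f(n+S-M)$ versus $f(n+M+1)f(n+S-M-1)$, is compared by chaining limited log-concavity. This loses a factor $\prod_{k=S-M}^{M}\Phi^\gamma_R(2n+2k)$, which the slack absorbs.
\item The extreme anti-diagonals $S=\pm2M$ are handled together with $S=\pm(2M-1)$ in \Cref{claim:lipschitz_deloc_helper_edges}. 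This is where the full strength of $\gamma\ge32M+16$ is used.
\end{itemize}

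Your boundary-case shortcut is also wrong as stated. For $n\ge M$, the summand $f(n-M)$ is the \emph{largest} one in $g(n)$, so $g(n+1)\ge g(n)-f(n-M)$ does not give $g(n+1)\ge\frac{2M}{2M+1}g(n)$. For example, take $M=1$, $\gamma=48$, $R=2n$ with $n\ge2$, and set $f=1$ on $\abs{x}\le n-1$, $f(\pm n)=e^{-24}$, and $f=0$ beyond. This $f$ is symmetric and $(R,\gamma)$-limited log-concave, yet $g(n-1)g(n+1)/g(n)^2\approx 2e^{-24}$. So neither $c\ge(2M+1)^{-2}$ nor your lower bound $\Phi^\gamma_{R+1}(2n)\ge e^{\gamma/(2M+2)}$ suffices. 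The inequality holds here only because $\Phi^\gamma_{R+1}(2n)=e^{48}$. Capturing that requires limited log-concavity near $n-M$, as in Case~3 of \Cref{claim:lipschitz_deloc_helper_small_r}, where the paper verifies $\Phi^\gamma_{R+1}(2n)\ge4\,\Phi^\gamma_R(2n-2M)$.
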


\begin{proposition} \label{prop:bounds_on_limited_log_conc_laws_lipschitz}
Let $X$ be an integer-valued random variable whose law is symmetric around zero and $\pr{R,\gamma}$-limited log-concave with $\gamma, R>0$. Then the following hold:
\begin{enumerate}[(a)]
    \item $\abs{X}$ stochastically dominates a truncated discrete Gaussian $\abs{\mathcal{N}_{\ZZ\cap(-R/4,R/4)}\pr{0,\frac{R}{2\gamma}}}$;
    \item $\var{X} \geq \frac{R}{2\gamma e^4 \sqrt\pi}$ whenever $R \geq \max\pr{4\gamma, 64/\gamma}$; 
    \item $\prob{X = n}\geq e^{-\gamma n^2/R}\cdot \prob{X = 0}$ for all integers $-R/4 < n < R/4$;
    \item $\prob{X = n}\leq \frac{\max\pr{e^{\gamma},4}}{\sqrt{R}}$ for all $n\in\ZZ$.
\end{enumerate}
\end{proposition}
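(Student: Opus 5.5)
The plan is to reduce everything to \Cref{prop:bounds_on_limited_log_conc_laws}. Let $f$ be the law of $X$ and define $g:2\ZZ\to[0,1]$ by $g(2n)=f(n)$.

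\emph{Reduction.} As noted right after the definition, $g$ is $\pr{R,\gamma}$-limited log-concave in the sense of \Cref{defn:hom_limited_log_concave}. Indeed, weak log-concavity of $f$ on $\ZZ$ is exactly weak log-concavity of $g$ on $2\ZZ$, and the condition at $n\in(-R/2,R/2)$ becomes $g(2n)^2\le\Phi^\gamma_R(2n)g(2n-2)g(2n+2)$ for $2n\in(-R,R)$. Moreover $g$ is symmetric and is a probability law, since it is the law of $2X$. So \Cref{prop:bounds_on_limited_log_conc_laws} applies to $Y:=2X$.

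\emph{Translating each item.}
\begin{enumerate}[(a)]
\item Part (a) there gives that $\abs{2X}$ stochastically dominates $\abs{\mathcal{N}_{2\ZZ\cap(-R/2,R/2)}\pr{0,2R/\gamma}}$. A variable $W$ with this law equals $2W'$, where $W'$ takes value $k\in\ZZ$ with $2k\in(-R/2,R/2)$ and has weight $e^{-(2k)^2/(4R/\gamma)}=e^{-k^2/(2\cdot R/(2\gamma))}$. Thus $W'\sim\mathcal{N}_{\ZZ\cap(-R/4,R/4)}\pr{0,R/(2\gamma)}$, and dividing by $2$ preserves stochastic order.
\item Part (b) gives $\var(2X)\ge 2R/(\gamma e^4\sqrt\pi)$ under the same hypothesis on $R$. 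Dividing by $4$ yields $R/(2\gamma e^4\sqrt\pi)$.
\item Part (c) gives $g(2n)\ge e^{-\gamma n^2/R}g(0)$ for $-R/4<n<R/4$, which is exactly the claimed bound for $f(n)$.
\item Part (d) gives $g(2n)\le\max(e^\gamma,4)/\sqrt R$ for all $n$, hence the same bound for $f(n)$.
\end{enumerate}

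\emph{Main obstacle.} The only delicate point is bookkeeping: the definition of limited log-concavity for $f$ on $\ZZ$ uses $\Phi^\gamma_R(2n)$ on $n\in(-R/2,R/2)$. The pushforward $g$ then satisfies the even-lattice definition with the same $(R,\gamma)$, not with rescaled parameters. This is what yields the factors $1/4$ in the truncation window and the $R/(2\gamma)$ variance. Everything else is a direct substitution.
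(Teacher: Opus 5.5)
Your proposal is correct and follows the paper's route: the paper likewise notes that $f$ is $(R,\gamma)$-limited log-concave exactly when $g(2n)=f(n)$ is so in the sense of \Cref{defn:hom_limited_log_concave}, and then reads the proposition off \Cref{prop:bounds_on_limited_log_conc_laws} applied to $2X$. Your translation of each item is accurate: the window $(-R/4,R/4)$, the scale $R/(2\gamma)$, the factor $1/4$ in the variance, and the unchanged bounds in (c) and (d).
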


Out of the three, only \Cref{prop:convolve_with_offset_m_lip_kernel_limited} warrants a completely new proof, as the other two are essentially equivalent to their \Cref{sec:delocalization} counterparts (\Cref{prop:prod_limited_log_conc,prop:bounds_on_limited_log_conc_laws}). We defer the proof of \Cref{prop:convolve_with_offset_m_lip_kernel_limited} to \Cref{ssec:proof_of_lipschitz_limited_convolve}.
Note that the requirement $\gamma\geq 32M+16$ is not the tightest possible, but enables simpler algebra below.

\begin{theorem} \label{thm:lip_root_law_is_limited_log_conc}
Let $M\geq1$ and let $(T,v^*)$ be a finite rooted tree. Then $f_{T,M-\mathrm{Lip}}$ is $\pr{R,\gamma}$-limited log-concave for $R=\mathcal{R}_T$ and $\gamma=32M+16$.
\end{theorem}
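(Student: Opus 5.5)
We argue by induction on the depth of the tree, exactly mirroring the proof of \Cref{thm:hom_root_law_is_limited_log_conc}. For the trivial tree of depth zero, $f_{T,M-\mathrm{Lip}}=\1_{\cbr{0}}$. It is weakly log-concave, and the limited log-concavity requirement is vacuous for $R=\mathcal{R}_T=0$, since there are no integers in $(-R/2,R/2)=\emptyset$. So the base case holds trivially.

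For the induction step, let $T$ be nontrivial and set $R_v=\mathcal{R}_{T[v]}$ for $v\in N\pr{v^*}$. By the counting argument already used in the proof of \Cref{thm:lip_root_law_is_strong_log_conc}, we have
\begin{equation*}
f_{T,M-\mathrm{Lip}} \propto \prod_{v\in N\pr{v^*}} \pr{f_{T[v],M-\mathrm{Lip}} * \1_{\cbr{-M,...,M}}} .
\end{equation*}
By the induction hypothesis, each $f_{T[v],M-\mathrm{Lip}}$ is $\pr{R_v,\gamma}$-limited log-concave with $\gamma=32M+16$. Each is also symmetric around $0$, because the map $\phi\mapsto-\phi$ is a bijection of $M$-Lipschitz functions vanishing on the leaves. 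This symmetry is needed to apply \Cref{prop:convolve_with_offset_m_lip_kernel_limited}, which then shows that each factor $f_{T[v],M-\mathrm{Lip}} * \1_{\cbr{-M,...,M}}$ is $\pr{R_v+1,\gamma}$-limited log-concave; the hypothesis $\gamma\geq 32M+16$ holds with equality.

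Next we combine the factors. We apply \Cref{prop:product_lipschitz_limited_log_conc} repeatedly over the children, which is valid since the relation $\frac1R=\frac1{R_1}+\frac1{R_2}$ is associative. This shows that the product is $\pr{R',\gamma}$-limited log-concave with $\frac{1}{R'}=\sum_{v\in N\pr{v^*}}\frac{1}{R_v+1}$. By \Cref{lem:resistance_flow}, $R'=\mathcal{R}_T$.

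Finally, the property is invariant under multiplication by a positive constant: both sides of the defining inequality scale by the square of the constant, and weak log-concavity and contiguity of the support are unaffected. So the normalization step preserves it. One small point is children that are leaves, where $R_v=0$. There $f=\1_{\cbr{0}}$ and the convolution is $\1_{\cbr{-M,\dots,M}}$, which must be $(1,\gamma)$-limited log-concave. This is consistent, since \Cref{prop:convolve_with_offset_m_lip_kernel_limited} applies with $R=0$: its hypothesis is vacuous, and the conclusion only concerns $n=0$, where $1\le e^{\gamma}\cdot 1$. There is no real obstacle here; all the difficulty lies in \Cref{prop:convolve_with_offset_m_lip_kernel_limited}, which we are allowed to assume.
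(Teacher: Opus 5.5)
Your proposal is correct and follows the paper's proof, which is the induction on depth combining the recursion for $f_{T,M-\mathrm{Lip}}$, \Cref{prop:convolve_with_offset_m_lip_kernel_limited} (using symmetry), repeated application of \Cref{prop:product_lipschitz_limited_log_conc}, and \Cref{lem:resistance_flow}. You also spell out details the paper leaves implicit: the degenerate base case, invariance under normalization, and the check of leaf children with $R_v=0$ via $\1_{\cbr{-M,\dots,M}}$ at $n=0$.
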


\begin{proof}
Follows inductively from \Cref{lem:resistance_flow} and \Cref{prop:product_lipschitz_limited_log_conc,prop:convolve_with_offset_m_lip_kernel_limited}.
\end{proof}

\begin{corollary} \label{cor:lower_bounds_on_lip_root_laws}
Let $(T,v^*)$ be an even finite rooted tree, and let $M$ be a positive integer. Let $\phi:V(T)\to\ZZ$ be a uniformly random $M$-Lipschitz function satisfying $\phi\equiv0$ on the leaves of $T$. With $R=\mathcal{R}_T$, the following hold:
\begin{enumerate}[(a)]
    \item $\abs{\phi\pr{v^*}}$ stochastically dominates $\abs{\mathcal{N}_{\ZZ\cap(-R/4,R/4)}\pr{0,\frac{R}{64M+32}}}$;
    \item $\var\pr{\phi\pr{v^*}} \geq \floor{\frac{R}{(64M+32)e^4 \sqrt{\pi}}}$;
    \item $\prob{\phi\pr{v^*} = n}\geq e^{-(32M+16)n^2/R}\cdot \prob{\phi\pr{v^*} = 0}$ for all integers $-R/4 < n < R/4$;
    \item $\prob{\phi\pr{v^*} = n}\leq \frac{e^{32M+16}}{\sqrt{R}}$ for all $n$.
\end{enumerate}
\end{corollary}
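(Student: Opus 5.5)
The plan is to combine \Cref{thm:lip_root_law_is_limited_log_conc} with \Cref{prop:bounds_on_limited_log_conc_laws_lipschitz}, in the same way that \Cref{cor:lower_bounds_on_hom_root_laws} follows from \Cref{thm:hom_root_law_is_limited_log_conc} and \Cref{prop:bounds_on_limited_log_conc_laws}.

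\textbf{Setup.} Let $f=f_{T,M-\mathrm{Lip}}$ be the law of $X=\phi(v^*)$. The map $\phi\mapsto-\phi$ is a bijection of the set of $M$-Lipschitz functions vanishing on the leaves. Hence $f$ is symmetric around zero. By \Cref{thm:lip_root_law_is_limited_log_conc}, $f$ is $\pr{R,\gamma}$-limited log-concave with $R=\mathcal{R}_T$ and $\gamma=32M+16$. Therefore all four items of \Cref{prop:bounds_on_limited_log_conc_laws_lipschitz} apply to $X$ with this value of $\gamma$. It remains to substitute $\gamma$ into each item.

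\textbf{Items (a), (c), (d).} These are direct substitutions.
\begin{itemize}
\item For (a), the variance parameter is $\frac{R}{2\gamma}=\frac{R}{64M+32}$.
\item For (c), the exponent is $e^{-\gamma n^2/R}=e^{-(32M+16)n^2/R}$, valid on the same range $-R/4<n<R/4$.
\item For (d), since $\gamma\ge 48$ we have $e^\gamma\geq 4$. So $\max(e^\gamma,4)=e^{32M+16}$.
\end{itemize}

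\textbf{Item (b).} This is the only point needing care, because \Cref{prop:bounds_on_limited_log_conc_laws_lipschitz}(b) assumes $R\ge\max(4\gamma,64/\gamma)$. Since $\gamma\geq 48$, we have $64/\gamma<4\gamma$, so the condition reduces to $R\geq 4\gamma$.
\begin{itemize}
\item If $R\geq 4\gamma$, the proposition gives $\var X\geq \frac{R}{2\gamma e^4\sqrt\pi}=\frac{R}{(64M+32)e^4\sqrt\pi}$. This is at least its floor.
\item If $R<4\gamma$, then
\[
\frac{R}{(64M+32)e^4\sqrt\pi}<\frac{4\gamma}{2\gamma e^4\sqrt\pi}=\frac{2}{e^4\sqrt\pi}<1 .
\]
So the floor equals $0$, and the bound holds trivially because variance is nonnegative.
\end{itemize}
The floor in the statement is there exactly to absorb this small-resistance case. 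Apart from this case split, the argument is routine bookkeeping.
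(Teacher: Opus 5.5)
Your proposal is correct and takes the same approach as the paper, which deduces the corollary directly from \Cref{thm:lip_root_law_is_limited_log_conc} and \Cref{prop:bounds_on_limited_log_conc_laws_lipschitz} with $\gamma=32M+16$. You also check the symmetry of the root law and treat the case $R<4\gamma$, where the floor in (b) is zero; these are details the paper leaves implicit.
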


\begin{proof}
Follows directly from \Cref{prop:bounds_on_limited_log_conc_laws_lipschitz} and \Cref{thm:lip_root_law_is_limited_log_conc}.
\end{proof}

For recurrent trees, which have infinite resistance, taking $R\to\infty$ in the distribution bounds above readily yields delocalization of Lipschitz functions (as in the proof of \Cref{thm:hom_deloc_on_rec_tree}).

\begin{theorem} \label{thm:lip_deloc_on_rec_tree}
Let $T$ be a recurrent locally finite tree. Then integer-valued $M$-Lipschitz functions on $T$ are delocalized for all integers $M\geq 1$.
\end{theorem}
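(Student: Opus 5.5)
The plan is to mirror the proof of \Cref{thm:hom_deloc_on_rec_tree}, using the pointwise upper bound in \Cref{cor:lower_bounds_on_lip_root_laws}(d). Since Lipschitz functions have no parity constraint, no even/odd reduction is needed.

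Fix a root $v^*\in T$ and an exhausting increasing sequence of finite rooted subtrees $T_1\subset T_2\subset\cdots$ with $\bigcup_i T_i=T$. Recurrence of $T$ gives $\mathcal{R}_{T_i}\to\infty$. This uses the standard fact that the effective resistance from $v^*$ to the leaves of $T_i$ converges to the resistance to infinity, which is infinite for recurrent trees.

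Let $\phi_{T_i}$ be a uniformly random $M$-Lipschitz function on $T_i$ vanishing at the leaves. By \Cref{thm:lip_root_law_is_limited_log_conc}, the law of $\phi_{T_i}(v^*)$ is symmetric and $(\mathcal{R}_{T_i},\gamma)$-limited log-concave with $\gamma=32M+16$. Symmetry holds since $\phi\mapsto-\phi$ preserves the model. Then \Cref{prop:bounds_on_limited_log_conc_laws_lipschitz}(d) gives $\prob{\phi_{T_i}(v^*)=n}\le e^{\gamma}/\sqrt{\mathcal{R}_{T_i}}$ for every $n$, once $\mathcal{R}_{T_i}$ is large. Corollary \Cref{cor:lower_bounds_on_lip_root_laws} is phrased for even trees, but its proof never uses parity; if needed, I would invoke the proposition directly to avoid that hypothesis. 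Summing over the $2A+1$ integers in $[-A,A]$ gives
\begin{equation*}
\prob{\abs{\phi_{T_i}(v^*)}\le A}\le \frac{(2A+1)e^{32M+16}}{\sqrt{\mathcal{R}_{T_i}}}\xrightarrow{i\to\infty}0,
\end{equation*}
which is delocalization.

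The only slightly delicate points are bookkeeping. The first is justifying that any admissible choice of finite subtrees with boundary receding to infinity has resistance tending to infinity; this is handled exactly as in the homomorphism case. The second is confirming that \Cref{prop:bounds_on_limited_log_conc_laws_lipschitz}(d) applies without a parity assumption. Neither presents a real obstacle, since the heavy lifting is done by \Cref{thm:lip_root_law_is_limited_log_conc}.
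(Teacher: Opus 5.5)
Your proposal is correct and follows the paper's route. The paper's proof simply lets $R=\mathcal{R}_{T_i}\to\infty$ in the pointwise bound of \Cref{cor:lower_bounds_on_lip_root_laws}(d) and sums over a bounded window, exactly as in \Cref{thm:hom_deloc_on_rec_tree}; you are also right that no odd/even reduction is needed, since \Cref{thm:lip_root_law_is_limited_log_conc} and \Cref{prop:bounds_on_limited_log_conc_laws_lipschitz} carry no parity assumption.
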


Combining \Cref{thm:lip_loc_on_tr_tree,thm:lip_deloc_on_rec_tree}, we obtain the complete transient-recurrent dichotomy stated in \Cref{thm:equivalence_lip}.

\subsection{Proof of \Cref{prop:convolve_with_offset_m_lip_kernel_limited}}
\label{ssec:proof_of_lipschitz_limited_convolve}
In this subsection, we prove that limited log-concavity is preserved under convolution by $\1_{\cbr{-M,...,M}}$, provided $\gamma\geq 32M+16$. Let $f:\ZZ\to\RRnonn$ be $(R,\gamma)$-limited log-concave and symmetric. We denote $g\coloneqq f*\1_{\cbr{-M,...,M}}$ and prove that
\begin{equation*} 
g(n)^2\leq \Phi^\gamma_{R+1}(2n) \cdot g(n-1)g(n+1) 
\qquad\text{for all integers } n \in \pr{-\frac{R+1}{2},\frac{R+1}{2}}.
\end{equation*}
This expands to
\begin{equation} \label{ineq:lipschitz_limited_convolve_main_ineq}
\sum_{i=-M}^{M}\sum_{j=-M}^{M}f(n+i)f(n+j) \leq \Phi^\gamma_{R+1}(2n) \sum_{i=-M}^{M}\sum_{j=-M}^{M}f(n+i-1)f(n+j+1) .
\end{equation}
Mirroring the approach used in \Cref{prop:convolve_with_offset_m_lip_kernel}, we split this inequality into several parts based on the value of $i+j$. We fix an integer $n\in\pr{-\frac{R+1}{2},\frac{R+1}{2}}$, and for every integer $S\in\br{-2M,2M}$ we denote
\begin{align*}
L_S &\coloneqq \sum_{\stackrel{-M\leq i,j\leq M}{i+j=S}}f(n+i)f(n+j) , \\
R_S &\coloneqq \Phi^\gamma_{R+1}(2n) \sum_{\stackrel{-M\leq i,j\leq M}{i+j=S}}f(n+i-1)f(n+j+1) .
\end{align*}
With this notation, the desired inequality \eqref{ineq:lipschitz_limited_convolve_main_ineq} simply translates to
\begin{equation*}
\sum_{S=-2M}^{S=2M} L_S \leq \sum_{S=-2M}^{S=2M} R_S.
\end{equation*}

\begin{claim} \label{claim:lipschitz_deloc_helper_middle}
Suppose that $\abs{n-M}< \frac{R}{2}$ and $\abs{n+M}<\frac{R}{2}$. Then for all $-2M<S<2M$ we have
\begin{equation*}
L_S \leq R_S.
\end{equation*}
\end{claim}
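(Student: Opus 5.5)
The plan is to reduce $L_S\le R_S$ to a single comparison between two adjacent terms of one symmetric sequence. This is the same reindexing idea used for \eqref{ineq:after_split_small_r} in the proof of \Cref{prop:convolve_with_offset_m_lip_kernel}.

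\emph{Reindexing.} Fix $-2M<S<2M$, write $\Phi\coloneqq\Phi^\gamma_{R+1}(2n)$, and set $h(i)\coloneqq f(n+i)f(n+S-i)$. The pairs $(i,j)$ with $i+j=S$ and $-M\le i,j\le M$ are exactly $i\in[lo,hi]$, where $lo=\max(-M,S-M)$ and $hi=\min(M,S+M)$. Hence
\begin{equation*}
L_S=\sum_{i=lo}^{hi}h(i).
\end{equation*}
In $R_S$ substitute $i'=i-1$, so that $j+1=S-i'$. Then $R_S/\Phi$ is the same sum over the window shifted by one:
\begin{equation*}
\frac{R_S}{\Phi}=\sum_{i=lo-1}^{hi-1}h(i).
\end{equation*}
Since $lo+hi=S$ and $h(i)=h(S-i)$, we have $h(hi)=h(lo)$. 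Therefore
\begin{equation*}
\frac{R_S}{\Phi}=L_S-h(lo)+h(lo-1).
\end{equation*}
The claim becomes
\begin{equation*}
h(lo)\le \Phi\, h(lo-1)+(\Phi-1)\bigl(L_S-h(lo)\bigr).
\end{equation*}
By \Cref{claim:weak_concave_over_distance_sym}, $h$ is unimodal with its peak at $S/2$. So every term of $L_S-h(lo)$ is at least $h(lo)$, and there are $hi-lo=2M-|S|\ge1$ such terms; here we use $|S|<2M$. It therefore suffices to prove
\begin{equation*}
h(lo)\le \Phi\, h(lo-1)+(\Phi-1)\,h(lo).
\end{equation*}

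\emph{Local log-curvature estimate.} By symmetry of $f$ (and $S\mapsto -S$) assume $S\ge0$. Then $h(lo)=f(a)f(b)$ and $h(lo-1)=f(a-1)f(b+1)$, with $a=n+S-M$ and $b=n+M$, so $a\le b$. The hypotheses $|n\pm M|<R/2$ place every $k\in[a,b]$ inside $(-R/2,R/2)$. Multiplying the limited log-concavity inequalities $f(k)^2\le\Phi^\gamma_R(2k)f(k-1)f(k+1)$ telescopically, as in \Cref{claim:strong_concave_over_distance}, will give
\begin{equation*}
\frac{h(lo)}{h(lo-1)}\le\prod_{k=a}^{b}\Phi^\gamma_R(2k)^{c_k}
\end{equation*}
with suitable weights $c_k$. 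The aim is to use the weak log-concavity of $f$ (monotonicity of $f(k+1)/f(k)$) so that only a bounded effective number of factors survive. I would try bounding the ratio by $\exp\bigl(\gamma/(R-2|n|-2M)\bigr)$ raised to an $O(1)$ power, using the midpoint of $[a,b]$ and symmetry about zero. Each factor is controlled through $\max(|a|,|b|)\le|n|+M<R/2$.

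\emph{Closing the inequality, and the main obstacle.} It remains to check an elementary inequality of the form
\begin{equation*}
\rho\le\Phi+(\Phi-1)\rho,
\end{equation*}
where $\rho$ is the ratio bound just obtained, using $\gamma\ge32M+16$. One can divide through or use $(\Phi-1)(hi-lo)$ with the full count of middle terms if needed. I expect the previous step to be the crux. A crude telescoping gives a product of $2M-S+1$ factors, each of size about $e^{\gamma/(R-2|n|-2M)}$. This must be compared with $\Phi=e^{\gamma/(R+1-2|n|)}$, whose exponent has a larger denominator, so a naive bound loses. The fix I would try is to use the slack term $(\Phi-1)(hi-lo)h(lo)$ more fully, bounding $L_S$ from below by the geometric growth of $h$ toward $S/2$ rather than by $h(lo)$ alone. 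I would then split into the regime $R-2|n|\gg M$, where all exponents are small and first-order expansions $e^x\approx1+x$ suffice, and the regime where $R-2|n|=O(M)$, where $\Phi\ge e^{\gamma/O(M)}$ is already large because $\gamma\ge32M+16$.
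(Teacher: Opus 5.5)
Your setup is correct and matches the paper's. This includes the reindexing that gives $R_S/\Phi=L_S-h(lo)+h(lo-1)$ with $\Phi=\Phi^\gamma_{R+1}(2n)$, and the bound $h(i)\ge h(lo)$ on the whole window from \Cref{claim:weak_concave_over_distance_sym}. It also includes the telescoped estimate $h(lo)/h(lo-1)\le\prod_{k=a}^{b}\Phi^\gamma_R(2k)$ over the $2M-S+1$ points from $a=n+S-M$ to $b=n+M$.

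The gap is in the closing. Your reduction to $h(lo)\le\Phi\,h(lo-1)+(\Phi-1)h(lo)$ keeps only one of the $2M-|S|$ middle terms. That reduced inequality is false in general, and no sharper estimate of the ratio can rescue it. Take $f(k)\propto e^{-\gamma k^2/(2R)}$. It is symmetric, weakly log-concave and $(R,\gamma)$-limited log-concave, since its log-curvature is the constant $e^{\gamma/R}\le\Phi^\gamma_R(2k)$. For $n=0$, $S=0$ one gets exactly $\rho=h(lo)/h(lo-1)=e^{(2M+1)\gamma/R}$. So all $2M+1$ factors are genuinely present, and there is no bound by an $M$-independent power of $e^{\gamma/(R-2|n|-2M)}$. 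Your closing inequality $\rho\le\Phi+(\Phi-1)\rho$, with $\Phi=e^{\gamma/(R+1)}$, becomes to first order $(2M+1)\gamma/R\le 2\gamma/(R+1)$. This fails for large $R$ since $M\ge1$. So your diagnosis is inverted: the crude telescoped product is harmless, and what loses is discarding the middle terms.

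The missing step is the fallback you mention, actually carried out. For $S\ge0$, keep all $2M-S$ middle terms, each at least $h(lo)$, together with $h(lo-1)\ge h(lo)\,\Phi^\gamma_R(2|n|+2M)^{-(2M-S+1)}$. Dividing by $\Phi\,h(lo)$ reduces the claim to
\[
F(R)\coloneqq 2M-S+e^{-(2M-S+1)\gamma/(R-2|n|-2M)}-(2M-S+1)\,e^{-\gamma/(R+1-2|n|)}\ge0 .
\]
This is a second-order inequality. Expanding both exponentials, the linear terms sum to $-(2M-S+1)\gamma\left(\frac{1}{R-2|n|-2M}-\frac{1}{R+1-2|n|}\right)<0$. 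That quantity is of the same order as the quadratic terms, so the first-order linearization you propose for the regime $R-2|n|\gg M$ cannot settle it. The paper instead notes that $F(R)\to0$ as $R\to\infty$ and shows $F'\le0$. After taking a square root and using $e^x\ge1+x$, this reduces to $(2M-S)\gamma/2\ge 2M+1$, which holds because $S\le 2M-1$ and $\gamma\ge 4M+2$.
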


\begin{claim} \label{claim:lipschitz_deloc_helper_edges}
Suppose that $\abs{n-M}< \frac{R}{2}$ and $\abs{n+M}<\frac{R}{2}$. Then
\begin{align*}
\quad L_{2M}+L_{2M-1} &\leq R_{2M}+R_{2M-1}, \\
\quad L_{-2M}+L_{-2M+1} &\leq R_{-2M}+R_{-2M+1}.
\end{align*}
\end{claim}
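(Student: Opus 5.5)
The plan is to treat only the first inequality $L_{2M}+L_{2M-1}\le R_{2M}+R_{2M-1}$; the second then follows by the reflection $n\mapsto -n$, using the symmetry of $f$ and $\Phi^\gamma_{R+1}(2n)=\Phi^\gamma_{R+1}(-2n)$.

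\textbf{Setup.} Write $a=f(n+M-2)$, $b=f(n+M-1)$, $c=f(n+M)$, $d=f(n+M+1)$ and $\Phi=\Phi^\gamma_{R+1}(2n)$. For $S=2M$ only $i=j=M$ contributes, and for $S=2M-1$ the pairs $(M,M-1)$ and $(M-1,M)$ contribute. So the inequality to prove reads
\begin{equation*}
c^2+2bc\le \Phi\pr{bd+bc+ad}.
\end{equation*}
We may assume $b,c>0$, since otherwise the left-hand side is bounded by terms already present on the right, or the support is degenerate and the inequality is handled directly using weak log-concavity.

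\textbf{Eliminating $a$ and $d$.} Mirroring Case~1 of \Cref{prop:conv_limited_log_conc}, I eliminate the outer values using limited log-concavity of $f$ at the two points $n+M-1$ and $n+M$. Both $2(n+M-1)$ and $2(n+M)$ lie in $(-R,R)$ by the hypotheses $\abs{n\pm M}<R/2$; the point $n+M-1$ needs a short check, splitting on the sign of $n+M$. Writing $\Phi'=\Phi^\gamma_R(2(n+M-1))$ and $\Phi''=\Phi^\gamma_R(2(n+M))$, this gives
\begin{equation*}
b^2\le \Phi' ac,\qquad c^2\le \Phi'' bd .
\end{equation*}
Hence $d\ge c^2/(\Phi'' b)$ and $ad\ge bc/(\Phi'\Phi'')$. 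Substituting, it suffices to show
\begin{equation*}
\pr{\frac{\Phi}{\Phi''}-1}c^2+\pr{\Phi+\frac{\Phi}{\Phi'\Phi''}-2}bc\ge 0 .
\end{equation*}

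\textbf{Case split on $c$ versus $b$.} By unimodality (\Cref{prop:unimodal}), $c\le b$ when $n+M\ge 1$ and $c\ge b$ when $n+M\le 0$.
\begin{itemize}
\item If $n+M\le 0$, then $\abs{2n}-\abs{2n+2M}=2M\ge 1$. This gives $\Phi\ge\Phi''$, so the $c^2$ coefficient is nonnegative. It then suffices that $\Phi+\Phi/(\Phi'\Phi'')\ge 2$, and I will derive this from the explicit exponents.
\item If $n+M\ge 1$, the $c^2$ coefficient may be negative, because $\Phi''$ corresponds to the point $2n+2M$, which is farther from $0$ than $2n$. Since $c\le b$, I replace $c^2$ by $bc$, reducing to the scalar inequality
\begin{equation*}
\Phi\pr{1+\frac{1}{\Phi''}+\frac{1}{\Phi'\Phi''}}\ge 3 .
\end{equation*}
This is an analogue of \Cref{claim:phi_middle_bound}, with exponents $\gamma/(R+1-2\abs{n})$, $\gamma/(R-2\abs{n+M})$ and $\gamma/(R-2\abs{n+M-1})$.
\end{itemize}

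\textbf{Main obstacle.} The hard step is this last elementary inequality. Unlike the homomorphism case, the arguments $2n$ and $2n+2M$ are offset by $2M$ rather than by $1$, so the exponents do not match up. Writing $x=R-2(n+M)>0$, the needed comparison becomes one between $e^{\gamma/(x+2M+1)}$ and $e^{-\gamma/x}$-type terms.

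My first attempt is the route of \Cref{claim:phi_middle_bound}: view the difference as a function of $x$ that tends to $0$ (or a positive limit) as $x\to\infty$, and show it is monotone via $e^y\ge 1+y$. The largeness $\gamma\ge 32M+16$ should absorb the factor $(x+2M+1)/x$ produced in the derivative comparison. If that fails, the fallback is to use the cruder bound $\Phi\ge 3$, which holds when $x$ is at most of order $\gamma$, and a separate first-order expansion for large $x$.
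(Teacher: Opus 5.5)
Your setup and your elimination of $a=f(n+M-2)$ and $d=f(n+M+1)$ (limited log-concavity at $n+M-1$ and $n+M$) match the paper, and your target $(\Phi/\Phi''-1)c^2+(\Phi+\Phi/(\Phi'\Phi'')-2)bc\ge 0$ is correct. The case split, however, is on the wrong quantity. Replacing $c^2$ by $bc$ is valid only when $\Phi/\Phi''-1\le 0$, i.e.\ when $|n+M|\ge|n|$. Your claim that $2n+2M$ is farther from $0$ than $2n$ whenever $n+M\ge 1$ fails for negative $n$: take $M=3$, $n=-2$, so $|2n+2M|=2<4=|2n|$. There $\Phi>\Phi''$, so substituting the larger $bc$ for $c^2$ goes the wrong way. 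The paper splits on $|n+M|$ versus $|n|$ instead, and then the case $|n+M|<|n|$ is free. Indeed $\Phi^\gamma_R(2n+2M)<\Phi^\gamma_{R+1}(2n)$ gives $L_{2M}\le R_{2M}$ directly from limited log-concavity at $n+M$, and $L_{2M-1}\le R_{2M-1}$ is already \Cref{claim:lipschitz_deloc_helper_middle}. This also settles your first bullet, where $\Phi+\Phi/(\Phi'\Phi'')\ge 2$ was left unproved; for $M=1$ it is not immediate, since then $\Phi'>\Phi$. Separately, the degenerate case $b=0<c$ cannot be handled by weak log-concavity, since the inequality would be false there. It is instead ruled out by limited log-concavity at $n+M$.

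The more serious gap is that you never prove the scalar inequality $\Phi(1+1/\Phi''+1/(\Phi'\Phi''))\ge 3$, which is the whole content of the claim, and the routes you sketch do not work as stated. First, the difference $\Phi(1+1/\Phi''+1/(\Phi'\Phi''))-3$ contains three $x$-dependent exponentials. Its monotonicity therefore does not reduce to a two-term comparison via $e^y\ge 1+y$, as it did in \Cref{claim:phi_middle_bound}. Second, your fallback of a first-order expansion for large $x$ gives the wrong sign. For $n\ge 0$ the term linear in $\gamma$ is $\gamma\bigl(\frac{3}{x+2M+1}-\frac{2}{x}-\frac{1}{x+2}\bigr)\approx-(6M+1)\gamma/x^2<0$. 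The inequality holds only because the quadratic term $\approx\gamma^2/x^2$ dominates, and this is exactly where $\gamma\ge 32M+16$ enters. Third, you do not address the intermediate range where $\Phi<3$ but $x$ is not $\gg\gamma$. Finally, your offset $x+2M+1$ is correct only for $n\ge 0$.

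The paper first uses $\Phi'\le\Phi''$, valid because $0\le 2n+2M-2<2n+2M$. This reduces the claim to $3e^{-1/u}\le 1+e^{-1/v}+e^{-2/v}$ with $u=(R+1-2|n|)/\gamma$ and $v=(R-2n-2M)/\gamma$, where $0<u-v\le(2M+1)/\gamma\le 1/16$; this handles negative $n$ uniformly. It then splits into three cases:
\begin{itemize}
\item $u<1/2$, which is trivial since $3e^{-2}<1$;
\item $v>2$, handled by Taylor bounds: the first-order loss $3(u-v)/(uv)<3/(16v^2)$ is beaten by the second-order gain $1/v^2-3/(2v^3)$;
\item $u\ge 1/2$ and $v\le 2$, where $v/u\ge 7/8$ reduces the claim to $3t^{7/8}\le 1+t+t^2$ for $t=e^{-1/v}\le e^{-1/2}$, which holds because the smallest positive root of $3t^{7/8}=1+t+t^2$ is about $0.684$.
\end{itemize}
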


\begin{claim} \label{claim:lipschitz_deloc_helper_small_r}
Suppose that $\abs{n-M}\geq \frac{R}{2}$ or $\abs{n+M}\geq \frac{R}{2}$. Then
\begin{equation*}
\sum_{S=-2M}^{S=2M} L_S \leq \sum_{S=-2M}^{S=2M} R_S.
\end{equation*}
\end{claim}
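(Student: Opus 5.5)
By symmetry of $f$ (hence of $g$), I will assume $n\ge 0$. The hypothesis then reduces to $n+M\ge R/2$: indeed $|n-M|\ge R/2$ would force either $n\ge R/2+M$, contradicting $n<(R+1)/2$ since $M\ge1$, or $M-n\ge R/2$, which again gives $n+M\ge R/2$. The point is that in this regime $n$ is so close to the endpoint $(R+1)/2$ that the factor $\Phi^\gamma_{R+1}(2n)$ is huge. Indeed $R+1-2n\le 2M+1$, so
\[
\Phi^\gamma_{R+1}(2n)=e^{\gamma/(R+1-2n)}\ge e^{\gamma/(2M+1)}\ge e^{16}.
\]
So I will not split into the $L_S,R_S$ pieces at all. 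Instead I will prove $g(n)^2\le \Phi^\gamma_{R+1}(2n)\,g(n-1)g(n+1)$ directly, using crude bounds.

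First step: $g$ is symmetric and weakly log-concave by \Cref{prop:discrete_weak_concave_after_convolve}, so it is unimodal (the $\ZZ$-analogue of \Cref{prop:unimodal}) and $g(n-1)\ge g(n)$ for $n\ge0$. Next, $g(n+1)=g(n)-f(n-M)+f(n+M+1)\ge g(n)-f(n-M)$. Writing $\Phi:=\Phi^\gamma_{R+1}(2n)$, it therefore suffices to show
\[
f(n-M)\le \pr{1-\Phi^{-1}}g(n).
\]
Since $g(n)\ge f(n-M)+f(n-M+1)$, this in turn follows from
\[
f(n-M)\le (\Phi-1)\,f(n-M+1).
\]
If $g(n)=0$ there is nothing to prove, so I may assume the relevant values are positive by contiguity of the support.

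Second step: verify $f(n-M)\le(\Phi-1)f(n-M+1)$, splitting on the sign of $k:=n-M$.
\begin{itemize}
\item If $k\le 0$, then $|k+1|\le |k|$. Unimodality of the symmetric log-concave $f$ gives $f(k+1)\ge f(k)$, and $\Phi-1\ge e^{16}-1\ge1$ finishes this case.
\item If $k\ge1$, then $0<k<R/2$. Limited log-concavity of $f$ at $k$ together with $f(k-1)\ge f(k)$ gives
\[
f(k+1)\ge \frac{f(k)^2}{\Phi^\gamma_R(2k)\,f(k-1)}\ge \frac{f(k)}{\Phi^\gamma_R(2k)}.
\]
It then remains to check $\Phi^\gamma_R(2k)\le \Phi-1$. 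Now $\Phi^\gamma_R(2k)=e^{\gamma/(R-2n+2M)}$ and $\Phi=e^{\gamma/(R+1-2n)}$, where $0<R+1-2n\le R-2n+2M$ because $M\ge1$. Hence $\Phi\ge\Phi^\gamma_R(2k)$, but I need the slightly stronger $\Phi\ge \Phi^\gamma_R(2k)+1$.
\end{itemize}

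The main obstacle is exactly this last margin. Both exponents can be of size roughly $\gamma/(2M+1)$, so a ratio comparison alone does not yield the extra $+1$. I will handle it as follows. If $M\ge2$, the gap between the denominators is at least $2M-1\ge 3$, and $e^{a}-e^{b}\ge1$ follows easily from $a\ge b\ge \gamma/(R-2n+2M)$ and $\gamma\ge 32M+16$. In the tight case $M=1$ (and in general), I will avoid the issue by not discarding $f(n-M+2),\dots,f(n+M)$ from $g(n)$. Instead I will apply limited log-concavity once more at $k+1$ when $k+1<R/2$, which gives $f(k+1)+f(k+2)\ge f(k)\bigl(\Phi_R^{\gamma}(2k)^{-1}+\Phi_R^\gamma(2k)^{-1}\Phi^\gamma_R(2k+2)^{-1}\bigr)$. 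Alternatively, I will use $g(n-1)\ge g(n)+f(n-M-1)-f(n+M)$ to gain a factor on the left. Either way, the extra terms should absorb the missing $+1$, since each such term costs only a factor $e^{-\gamma/(\cdot)}$ while $\Phi\ge e^{16}$ leaves ample room. If $k+1\ge R/2$, then $R-2n+2M\le 2M+1$ and the two exponents are comparable, and I will finish by a direct numerical check using $\gamma\ge32M+16$.
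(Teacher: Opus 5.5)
Your proposal has a genuine gap. Your first reduction discards exactly what makes the claim true, and the inequality it reduces to is false. Using only $g(n-1)\ge g(n)$, you are left needing $g(n)\le \Phi\,g(n+1)$ with $\Phi=\Phi^\gamma_{R+1}(2n)$. In the regime of this claim, however, limited log-concavity lets $f$ drop very sharply near $n-M$, so $f(n-M)/f(n-M+1)$ can exceed $\Phi$.

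Concretely, take $M=1$, $\gamma=48$, $R=10$, $n=5$. Then $k=4$, $n+M\ge R/2$, $n<(R+1)/2$, and $\Phi=e^{\gamma}$. Let $f$ be symmetric, supported on $[-5,5]$, with the limited log-concavity inequality holding with equality at $j=0,1,\dots,4$. Then
\[
\log\frac{f(4)}{f(5)}=\gamma\Bigl(\frac1{20}+\frac18+\frac16+\frac14+\frac12\Bigr)>1.09\,\gamma ,
\]
while $g(5)\ge f(4)$ and $g(6)=f(5)$, so $g(5)>\Phi\,g(6)$. In particular your target $f(k)\le(\Phi-1)f(k+1)$ fails.

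This is not a missing ``$+1$'' that only bites when $M=1$. For $M=2$, $\gamma=80$, $R=12$, $n=4$, the same construction gives $\log\frac{f(2)}{f(3)}=\gamma(\frac1{24}+\frac1{10}+\frac18)>\frac{\gamma}{5}=\log\Phi$.

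The local error is in the display $f(k+1)\ge\frac{f(k)^2}{\Phi^\gamma_R(2k)f(k-1)}\ge\frac{f(k)}{\Phi^\gamma_R(2k)}$. Since $f(k-1)\ge f(k)$, the second inequality goes the wrong way. Iterating limited log-concavity down to $0$ only yields $\log\frac{f(k)}{f(k+1)}\le\gamma\bigl(\frac1{2R}+\sum_{j=1}^{k}\frac1{R-2j}\bigr)$, which can be of order $\gamma\log k$. Your first fallback (applying limited log-concavity again at $k+1$) relies on the same false bound. The one that points the right way, using $g(n-1)\ge g(n)+f(n-M-1)-f(n+M)$, is left uncarried out.

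The missing idea is that $g(n-1)$ contains $f(n-M-1)$, which can be far larger than $f(n-M)$. You should pair it with $f(n-M+1)$ from $g(n+1)$, via limited log-concavity at $k=n-M$: $f(k)^2\le\Phi^\gamma_R(2k)f(k-1)f(k+1)$. Since $R\le 2n+2M$ in this regime,
\[
\log\Phi-\log\Phi^\gamma_R(2k)=\frac{(2M-1)\gamma}{(R+1-2n)(R-2n+2M)}\ge\frac{(2M-1)(32M+16)}{(2M+1)\cdot 4M}\ge 4 ,
\]
so this term costs only a constant fraction of $\Phi$. This is the paper's Case 3. The paper then bounds everything else in $\sum_S L_S$ by two shifted squares, each a subsum of $\sum_{i,j}f(n+i-1)f(n+j+1)=\Phi^{-1}\sum_S R_S$.

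In your own notation, a quick repair for $k\ge0$ goes as follows. Set $h=g(n)-f(k)$. Then $h\le g(n+1)$, and termwise by unimodality $h\le g(n-1)$. Hence
\[
g(n)^2\le 2f(k)^2+2h^2\le 2\bigl(\Phi^\gamma_R(2k)+1\bigr)g(n-1)g(n+1)\le\Phi\,g(n-1)g(n+1).
\]
This repair also covers $k=0$, where your assertion $|k+1|\le|k|$ is false. Separately, $g(n-1)\ge g(n)$ fails at $n=0$, since there $g(-1)=g(1)\le g(0)$, so that case needs its own (easy) argument.
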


Equipped with these three claims, the rest of the proof is straightforward:
If $\abs{n-M}< \frac R2$ and $\abs{n+M}< \frac R2$ then \Cref{claim:lipschitz_deloc_helper_middle,claim:lipschitz_deloc_helper_edges} give
\begin{multline*}
\pr{L_{-2M} + L_{-2M+1}} + L_{-2M+2}+\cdots+\pr{L_{2M-1} + L_{2M}} \leq \\
\leq \pr{R_{-2M} + R_{-2M+1}} + R_{-2M+2}+\cdots+\pr{R_{2M-1} + R_{2M}},
\end{multline*}and otherwise $\sum_SL_S\leq \sum_SR_S$ follows from \Cref{claim:lipschitz_deloc_helper_small_r}. We finish by proving these claims. The high-level strategy used to prove each claim is similar to \Cref{prop:conv_limited_log_conc}: We use limited log-concavity to eliminate the $f(\cdot)$ terms, and thus obtain elementary inequalities on $R,\gamma,n,M,S$.

\begin{proof} [Proof of \Cref{claim:lipschitz_deloc_helper_middle}]
By convexity of $x\mapsto\abs{x}$, we have $\abs{n+k}<\frac{R}{2}$ for all $-M\leq k\leq M$. By symmetry, we may assume that $S\geq 0$. We need to prove that
\begin{equation*}
\sum_{i=S-M}^{M}f(n+i)f(n+S-i) \leq \Phi^\gamma_{R+1}(2n) \sum_{i=S-M}^{M}f(n+i-1)f(n+S+1-i)
\end{equation*}
or equivalently,
\begin{multline}\label{ineq:deloc_helper_middle_coeffs}
0 \leq \sum_{i=S-M}^{M-1}\pr{\Phi^\gamma_{R+1}(2n)-1}f(n+i)f(n+S-i) + \\
+ \Phi^\gamma_{R+1}(2n)f(n+S-M-1)f(n+M+1) - f(n+M)f(n+S-M).
\end{multline}
Since $f$ is weakly log-concave, \Cref{claim:weak_concave_over_distance_sym} gives for $S-M\leq i \leq M-1$ that
\begin{equation}\label{ineq:deloc_helper_middle_termwise_replacement_1}
\pr{\Phi^\gamma_{R+1}(2n)-1}f(n+i)f(n+S-i) \geq \pr{\Phi^\gamma_{R+1}(2n)-1}f(n+M)f(n+S-M)
\end{equation}
Moreover, applying $\pr{R,\gamma}$-limited log-concavity at $n+k$ for $S-M\leq k\leq M$, we have
\begin{equation}\label{ineq:deloc_helper_middle_termwise_replacement_2}
f(n+M+1)f(n+S-M-1)\geq \frac{f(n+M)f(n+S-M)}{\prod_{S-M}^{M}\Phi^\gamma_R\pr{2n+2k}}.
\end{equation}
(deriving this requires safely dividing by intermediate values of $f$ which must be positive).
Substituting \eqref{ineq:deloc_helper_middle_termwise_replacement_1} and \eqref{ineq:deloc_helper_middle_termwise_replacement_2} into \eqref{ineq:deloc_helper_middle_coeffs}, and dividing by $\Phi^\gamma_{R+1}(2n)f(n+M)f(n+S-M)$, it suffices to prove that
\begin{equation} \label{ineq:deloc_helper_middle_fless}
0 \leq 2M-S
+ \frac{1}{\prod_{S-M}^{M}\Phi^\gamma_R\pr{2n+2k}} - \frac{2M-S+1}{\Phi^\gamma_{R+1}(2n)}.
\end{equation}
Note that $\abs{n+k}\leq|n|+M$ for all $k\in[S-M,M]$, and therefore $\Phi^\gamma_R\pr{2n+2k}\leq\Phi^\gamma_R\pr{|2n|+2M}$. Thus it suffices to prove that
\begin{equation*} 
0 \leq 2M-S
+ \frac{1}{\pr{\Phi^\gamma_R\pr{|2n|+2M}}^{2M-S+1}} - \frac{2M-S+1}{\Phi^\gamma_{R+1}(2n)},
\end{equation*}
or more explicitly, that
\begin{equation*}
F(R) \coloneqq 2M-S
+ \exp\pr{-\frac{(2M-S+1)\gamma}{R-\abs{2n}-2M}}
- \pr{2M-S+1}\exp\pr{-\frac{\gamma}{R+1-\abs{2n}}}
\geq 0 .
\end{equation*}
At this point we may assume that $n\geq0$ and thus eliminate the absolute value. Note that $\lim_{R\to\infty}F(R)=0$, so it suffices to show that $F$ is decreasing.
Differentiating, we need to show that
\begin{equation*}
F'(R)=\frac{\pr{2M-S+1}\gamma}{\pr{R-2n-2M}^2}\exp\pr{-\frac{\pr{2M-S+1}\gamma}{R-2n-2M}}
-\frac{\pr{2M-S+1}\gamma}{\pr{R+1-2n}^2}\exp\pr{-\frac{\gamma}{R+1-2n}} \leq 0,
\end{equation*}
or after rearranging and taking a square root,
\begin{equation} \label{ineq:final_part_of_lipschitz_deloc_helper_middle}
\exp\pr{\frac{\pr{2M-S+1}\gamma}{2\pr{R-2n-2M}}
-\frac{\gamma}{2\pr{R+1-2n}}}
\geq
\frac{R+1-2n}{R-2n-2M}.
\end{equation}
To verify \eqref{ineq:final_part_of_lipschitz_deloc_helper_middle} we apply $e^x\geq x+1$ and find that
\begin{multline*}
\exp\pr{\frac{\pr{2M-S+1}\gamma}{2\pr{R-2n-2M}}-\frac{\gamma}{2\pr{R+1-2n}}}
\geq
1 + \frac{\pr{2M-S+1}\gamma}{2\pr{R-2n-2M}}-\frac{\gamma}{2\pr{R+1-2n}} \geq \\
\geq 1 + \frac{\pr{2M-S+1}\gamma}{2\pr{R-2n-2M}}-\frac{\gamma}{2\pr{R-2n-2M}}
= \frac{R-2n-2M+\pr{M-\frac{1}{2}S}\gamma}{R-2n-2M} 
\geq \frac{R+1-2n}{R-2n-2M},
\end{multline*}
where the final transition follows from the inequalities $S\leq 2M-1$ and $\gamma\geq4M+2$.
\end{proof}

\begin{proof} [Proof of \Cref{claim:lipschitz_deloc_helper_edges}]
By symmetry, it suffices to prove the first part of the claim.
Suppose first that $|n+M| < |n|$. Then
\begin{equation*}
\Phi^\gamma_R(2n+2M)=\exp\pr{\frac{\gamma}{R-|2n+2M|}}
< \exp\pr{\frac{\gamma}{R+1-|2n|}} = \Phi^\gamma_{R+1}(2n).
\end{equation*}
Now $\pr{R,\gamma}$-limited log-concavity gives
\begin{multline} \label{ineq:edges_easy_case}
L_{2M}=f\pr{n+M}^2\leq\Phi^\gamma_R(2n+2M)f(n+M-1)f(n+M+1) \leq \\
\leq \Phi^\gamma_{R+1}(2n)f(n+M-1)f(n+M+1) = R_{2M}.
\end{multline}
Combining \eqref{ineq:edges_easy_case} with the fact that $L_{2M-1}\leq R_{2M-1}$ by \Cref{claim:lipschitz_deloc_helper_middle}, we are done in this case.

Therefore we may assume that $|n+M|\geq|n|$. This in particular implies that $n+M>0$.
Explicitly, we need to prove that
\begin{multline} \label{ineq:edges_hard_case}
f\pr{n+M}^2 + 2f\pr{n+M-1}f\pr{n+M}
\leq \Phi^\gamma_{R+1}(2n)\cdot(f(n+M-1)f(n+M+1)+\\
+f(n+M-2)f(n+M+1)+f(n+M-1)f(n+M)).
\end{multline}
By $\pr{R,\gamma}$-limited log-concavity of $f$ we have the inequalities
\begin{align}
f(n+M-1)f(n+M+1) &\geq \frac{f\pr{n+M}^2}{\Phi^\gamma_{R}(2n+2M)}, \label{ineq:edges_hard_case_bound_1}
\\
f(n+M-2)f(n+M+1) &\geq \frac{f(n+M-1)f(n+M)}{\Phi^\gamma_{R}(2n+2M)\Phi^\gamma_{R}(2n+2M-2)}. \label{ineq:edges_hard_case_bound_2}
\end{align}
Since $\Phi^\gamma_R$ is increasing in $\RRnonn$, we can simplify \eqref{ineq:edges_hard_case_bound_2} to
\begin{equation} \label{ineq:edges_hard_case_bound_2_simple}
f(n+M-2)f(n+M+1) \geq \frac{f(n+M-1)f(n+M)}{\Phi^\gamma_{R}(2n+2M)^2}. 
\end{equation}
Substituting \eqref{ineq:edges_hard_case_bound_1} and \eqref{ineq:edges_hard_case_bound_2_simple} into \eqref{ineq:edges_hard_case}, it suffices to show that
\begin{multline*}
f\pr{n+M}^2 + 2f\pr{n+M-1}f\pr{n+M} \leq \\
\leq \Phi^\gamma_{R+1}(2n)\cdot\pr{\frac{f\pr{n+M}^2}{\Phi^\gamma_{R}(2n+2M)}
+\frac{f(n+M-1)f(n+M)}{\Phi^\gamma_{R}(2n+2M)^2}+f(n+M-1)f(n+M)}
\end{multline*}
or after dividing by $\Phi^\gamma_{R+1}(2n) f(n+M)$ and rearranging,
\begin{multline*}
\pr{\frac{1}{\Phi^\gamma_{R+1}(2n)} - \frac{1}{\Phi^\gamma_{R}(2n+2M)}}
f(n+M)
\leq \\ \leq
\pr{\frac{1}{\Phi^\gamma_{R}(2n+2M)^2}+1
-\frac{2}{\Phi^\gamma_{R+1}(2n)}}
f(n+M-1).
\end{multline*}
Since $f(n+M)\leq f(n+M-1)$ by unimodality around zero, it suffices to show that
\begin{equation} \label{ineq:edges_hard_case_no_f}
\frac{1}{\Phi^\gamma_{R+1}(2n)} - \frac{1}{\Phi^\gamma_{R}(2n+2M)}
\leq
\frac{1}{\Phi^\gamma_{R}(2n+2M)^2}+1
-\frac{2}{\Phi^\gamma_{R+1}(2n)}.
\end{equation}
Writing $u=\pr{R+1-|2n|}/\gamma$ and $v=\pr{R-2n-2M}/\gamma$, \eqref{ineq:edges_hard_case_no_f} simply translates to 
\begin{equation} \label{ineq:3u_1_v_v2}
3e^{-1/u}\leq 1+e^{-1/v}+e^{-2/v} .
\end{equation}
Note that $u>v$ since $n+M \geq |n|$, and that $u-v \leq (2M+1)/\gamma \leq 1/16$. We proceed to show that \eqref{ineq:3u_1_v_v2} holds purely as a consequence of the fact that $0 < u-v \leq 1/16$ and $v>0$. We divide into three cases.

\paragraph{Case 1: $u < 1/2$.} In this case, we simply have
\begin{equation*}
3e^{-1/u} < 3e^{-2}<1<1+e^{-1/v}+e^{-2/v} .
\end{equation*}

\paragraph{Case 2: $v > 2$.} 
We apply the Taylor series bounds
\begin{align*}
e^{-1/u}&\leq 1 - \frac{1}{u} + \frac{1}{2u^2} ,\\
e^{-1/v}&\geq 1 - \frac{1}{v} + \frac{1}{2v^2} - \frac{1}{6v^3} ,\\
e^{-2/v}&\geq 1 - \frac{2}{v} + \frac{4}{2v^2} - \frac{8}{6v^3} .
\end{align*}
Substituting these into \eqref{ineq:3u_1_v_v2}, it suffices to prove that
\begin{equation*}
3 - \frac{3}{u} + \frac{3}{2u^2} \leq 3 - \frac{3}{v} + \frac{5}{2v^2} - \frac{9}{6v^3} 
\end{equation*}
or after rearranging,
\begin{equation*}
\frac{3(u-v)}{uv} + \frac{3\pr{v^2-u^2}}{2u^2v^2} \leq \frac{1}{v^2} - \frac{3}{2v^3} .
\end{equation*}
Bounding above the terms in the left-hand side by $\frac{3}{16v^2}$ and by $0$ respectively, we find indeed that
\begin{equation*}
\frac{3(u-v)}{uv} + \frac{3\pr{v^2-u^2}}{2u^2v^2}
< \frac{3}{16v^2}
< \frac{1}{v^2} - \frac{3}{2v^3}
\end{equation*}
since $v > 2$.
\paragraph{Case 3: $u \geq 1/2$ and $v\leq 2$.} Using that $u-v \le 1/16$ and $u \ge 1/2$, we see that
\begin{equation*}
\frac{v}{u} = 1 - \frac{u-v}{u}
\geq \frac{7}{8}.
\end{equation*}
Replacing $e^{-1/u}$ by $e^{-7/8v}$, it suffices to show that
\begin{equation*}
3e^{-7/8v} \leq 1+e^{-1/v}+e^{-2/v} .
\end{equation*}
This holds as the smallest positive solution to $3t^{7/8}=1+t+t^2$ is $t_0\approx 0.684$, while the largest value $e^{-1/v}$ can take in this case is $e^{-1/2} \approx 0.607$.
\end{proof}

\begin{proof} [Proof of \Cref{claim:lipschitz_deloc_helper_small_r}]
Since $R\leq |2n|+2M$ by the triangle inequality, we have
\begin{equation} \label{ineq:small_r_phi_bound}
\Phi^\gamma_{R+1}(2n) = \exp\pr{\frac{\gamma}{R+1-|2n|}} \geq \exp\pr{\frac{\gamma}{2M+1}} > 4 .
\end{equation}
We bound $\sum L_S$ from above by the sum of four terms:
\begin{align} 
\sum_{S=-2M}^{S=2M} L_S=\sum_{i=-M}^{M}\sum_{j=-M}^{M}f(n+i)f(n+j) &\leq
\sum_{i=-M}^{M-1}\sum_{j=-M+1}^{M}f(n+i)f(n+j) \label{eq:four_square_cover_square_1}
\\ &+ \sum_{i=-M+1}^{M}\sum_{j=-M}^{M-1}f(n+i)f(n+j) \label{eq:four_square_cover_square_2}
\\ &+ f(n-M)f(n-M) \label{eq:four_square_cover_corner_1}
\\ &+ f(n+M)f(n+M) . \label{eq:four_square_cover_corner_2}
\end{align}
This bound holds as every summand in the left-hand side appears once or twice in the right-hand side.
We prove that each of the four terms is individually bounded above by $\frac{1}{4}\sum_S R_S$.

Observe that the first term \eqref{eq:four_square_cover_square_1} is a subsum of 
\begin{equation*}
\sum_{i=-M}^{M}\sum_{j=-M}^{M}f(n+i-1)f(n+j+1) = \frac{1}{\Phi^\gamma_{R+1}(2n)}\sum R_S ,
\end{equation*}
which is smaller than $\frac{1}{4}\sum R_S$ due to \eqref{ineq:small_r_phi_bound}.
The second term \eqref{eq:four_square_cover_square_2} equals the first by symmetry, so it is also bounded by $\frac{1}{4}\sum R_S$.
To bound the third term \eqref{eq:four_square_cover_corner_1}, we divide into three cases. The bound on the fourth term \eqref{eq:four_square_cover_corner_2} is obtained symmetrically.

\paragraph{Case 1: $\abs{n-M}\geq\abs{n}$.} In this case, unimodularity of $f$ gives
\begin{equation*}
f\pr{n-M}^2 \leq f\pr{n}^2 \leq \frac{1}{\Phi^\gamma_{R+1}(2n)}\sum R_S
\end{equation*}
as $\Phi^\gamma_{R+1}(2n)f\pr{n}^2$ is one of the summands in the sum that defines $R_0$. Once again, \eqref{ineq:small_r_phi_bound} gives
\begin{equation*}
\frac{1}{\Phi^\gamma_{R+1}(2n)}\sum R_S \leq \frac{1}{4}\sum R_S .
\end{equation*}

\paragraph{Case 2: $\abs{n-M}<\abs{n}$, and $n<M$.}
The former condition also implies that $n>0$. Letting $i=M-2n+1$ and $j=M-2n-1$, we find that $\Phi^\gamma_{R+1}(2n)f\pr{-n+M}^2$ is one of the summands in the sum that defines $R_{2M-4n}$, and so
\begin{equation*}
f\pr{n-M}^2 = f\pr{-n+M}^2 \leq \frac{1}{\Phi^\gamma_{R+1}(2n)}\sum R_S
\leq \frac{1}{4}\sum R_S .
\end{equation*}

\paragraph{Case 3: $\abs{n-M}<\abs{n}$, and $n\geq M$.}
In this case,
\begin{equation*}
\abs{n-M}<n-\frac{1}{2}<\frac{R}{2}
\end{equation*}
so that $\pr{R,\gamma}$-limited log-concavity of $f$ gives
\begin{equation*}
f\pr{n-M}^2 \leq \Phi^\gamma_{R}(2n-2M) f\pr{n-M-1} f\pr{n-M+1}
\leq \frac{\Phi^\gamma_{R}(2n-2M)}{\Phi^\gamma_{R+1}(2n)} \sum R_S .
\end{equation*}
It remains to verify that $\Phi^\gamma_{R+1}(2n) \geq 4\cdot\Phi^\gamma_{R}(2n-2M)$. Taking logarithms, $R\leq 2M+2n$ yields
\begin{equation*}
\frac{\gamma}{R+1-2n} - \frac{\gamma}{R-2n+2M} = \frac{\pr{2M-1}\gamma}{\pr{R+1-2n}\pr{R-2n+2M}}
\geq \frac{\pr{2M-1}\cdot\pr{32M+16}}{\pr{2M+1}\cdot \pr{4M}} > \log(4) .
\end{equation*}
This completes the proof of the claim.
\end{proof}
Having established \Cref{claim:lipschitz_deloc_helper_middle,claim:lipschitz_deloc_helper_edges,claim:lipschitz_deloc_helper_small_r}, the proof of \Cref{prop:convolve_with_offset_m_lip_kernel_limited} is complete.

%% file: counterexample.tex
\section{Non-tree counterexamples} \label{sec:counterexample}

In this section, we provide examples to show that the characterization of localization-delocalization in terms of transience-recurrence, established in \Cref{thm:equivalence_hom,thm:equivalence_lip}, does not extend from trees to the class of locally finite graphs. In fact, we show that this fails in both directions. This is true both for random homomorphisms as well as integer-valued Lipschitz functions.

We begin with known results which demonstrate that recurrent graphs may exhibit localization for both of our height function models. The first part of the following theorem is a particular case of a result from~\cite{Pel17}, and the second part is a special case of a result from~\cite{PS20}.

\begin{theorem}[\cite{Pel17,PS20}]
Let $G=\ZZ^2 \times \{0,1\}^d$ where $d$ is a positive integer.
\begin{enumerate}[(a)]
    \item Homomorphisms and 1-Lipschitz functions are localized on $G$ whenever $d$ is large enough.
    \item For every $M>1$ there exists $d_0$ such that $M$-Lipschitz functions are localized on $G$ whenever $d \geq d_0$.
\end{enumerate}
\end{theorem}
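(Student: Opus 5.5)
The statement is cited from \cite{Pel17,PS20}, so the plan is to check that $G=\ZZ^2\times\{0,1\}^d$ fits the hypotheses of those works and that their conclusions give localization in the sense used here. The conclusion we need is tightness of the height at a fixed vertex under zero boundary conditions on finite domains exhausting $G$.

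For part (a) I would invoke Peled's result on high-dimensional tori and related graphs \cite{Pel17}. That result treats bipartite graphs of the form $\ZZ^2\times\mathbb{T}$, where $\mathbb{T}$ is a high-dimensional torus; the hypercube $\{0,1\}^d$ is the torus $\ZZ_2^d$. It shows that uniform homomorphisms, and equivalently $1$-Lipschitz functions, are localized once the torus dimension is large. The mechanism is the following.
\begin{enumerate}
\item A typical homomorphism takes mostly one parity class of values on each even sublattice. It is therefore close to a ``flat'' configuration with height $0$ on the even sublattice and $\pm1$ on the odd one.
\item Deviations from flatness are encoded by level-set contours, namely cutsets separating the root from the boundary.
\item A Peierls-type argument shows that a contour of size $k$ costs a factor $\exp(-c\,k/d)$ in probability, relative to the entropy gained by shifting the inside.
\item The number of such contours is controlled using isoperimetry in $\{0,1\}^d$.
\end{enumerate}
Since $\ZZ^2$ is only two-dimensional, the fibre dimension $d$ provides the large parameter; this is exactly the regime covered in \cite{Pel17}. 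The bijection between homomorphisms and $1$-Lipschitz functions on bipartite graphs (via $\psi=(\phi+\text{parity})/2$-type maps) then transfers localization from one model to the other.

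For part (b) I would invoke Peled--Samotij \cite{PS20}, who extend the contour argument to $M$-Lipschitz functions. There, for each $M$, the dimension threshold $d_0(M)$ must be large enough that the energy--entropy gain of a contour beats the $M$-dependent number of local height choices.

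The step needing the most care is the translation: one must confirm that the boundary conditions and domains used in \cite{Pel17,PS20} give tightness along every exhaustion. I would handle this by using their explicit tail bounds on $|\phi(v)|$, which are uniform in the domain.

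Finally, to justify placing this in the counterexample section, one checks that $G$ is recurrent. Simple random walk on $G$ projects to a lazy walk on $\ZZ^2$, and a lazy walk on $\ZZ^2$ is recurrent. Hence $G$ is a recurrent graph on which the height functions are localized, which contradicts the tree dichotomy.
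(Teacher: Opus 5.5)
Your plan matches the paper, which gives no argument of its own. It records (a) as a particular case of \cite{Pel17} and (b) as a special case of \cite{PS20}, and notes that $\ZZ^{d_1}\times\{0,1\}^{d_2}$ is recurrent only for $d_1=2$. Your mechanism sketch and recurrence check go beyond what the paper says.

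One auxiliary claim is false, however: there is no bijection between homomorphisms and $1$-Lipschitz functions on the \emph{same} bipartite graph. A map of the form $(\phi\pm\epsilon)/2$, with $\epsilon$ the vertex parity, is injective. But it only reaches Lipschitz functions whose increment along every edge, oriented from its even to its odd endpoint, lies in $\{0,1\}$ (or in $\{-1,0\}$, depending on the sign). For example, on a single edge with one endpoint pinned at $0$ there are $2$ homomorphisms but $3$ Lipschitz functions.

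The correct correspondence is between $1$-Lipschitz functions on $G$ and homomorphisms on the Cartesian product $G\times\{0,1\}$, given by $\psi(v)=\min\{\phi(v,0),\phi(v,1)\}$. Under it, zero boundary values for $\psi$ correspond to the flat boundary for $\phi$ ($0$ on even vertices, $1$ on odd ones). Since $G\times\{0,1\}=\ZZ^2\times\{0,1\}^{d+1}$, your transfer still works at the cost of one extra hypercube factor. In any case \cite{Pel17} treats both models directly, so this step is not needed.
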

These results hold more generally when $G=\ZZ^{d_1} \times \{0,1\}^{d_2}$ with $d_1 \geq 2$ and when $d=d_1+d_2$ is large as in the theorem. However, these graphs are recurrent only when $d_1=2$.

The second counterexample shows that random homomorphisms may be delocalized on general transient graphs. This example can be tuned to cover the case of integer-valued Lipschitz functions as well (see \Cref{remark:lip_ctrex}).

\begin{proposition}
There exists a transient locally finite graph on which homomorphisms are delocalized.
\end{proposition}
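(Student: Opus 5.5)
Our plan is to build a transient graph out of a recurrent tree. Since homomorphisms are delocalized on recurrent trees (\Cref{thm:hom_deloc_on_rec_tree}), we attach to such a tree a transient part that is so weakly coupled to the root that it neither pins the height there nor changes the resulting random walk behaviour. Concretely, let $T_0$ be a recurrent tree, for instance a half-line $\ZZ_{\ge 0}$ with root $0$. At its root we attach a transient graph $H$ on which the height is free to follow the root. The natural choice is a transient tree, e.g.\ a binary tree $B$, glued so that $G$ is a tree. But \Cref{thm:equivalence_hom} shows that any tree containing $B$ is transient and hence localized, so gluing trees cannot work. This tells us where the obstruction lies: the tree recursion multiplies in the localizing factor coming from $B$. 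To escape it, the transient part must contain cycles that remove its entropic pinning effect. We therefore take $H=\ZZ^3$, or better a bipartite transient graph on which homomorphisms are themselves \emph{delocalized}. Its root law can then be spread out while the graph stays transient.

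The cleanest candidate is a ladder-like construction that makes the effective resistance to infinity finite while keeping the number of homomorphisms insensitive to the root value. Take $G$ to be the half-line $\ZZ_{\ge0}$ in which each vertex $k$ is replaced by a large independent set $A_k$ of size $d_k$, with complete bipartite connections between $A_k$ and $A_{k+1}$. This is a ``blown-up path''. In any homomorphism, each layer $A_k$ takes values in $\{h_k-1,h_k+1\}$ patterns constrained by the neighbouring layers. The key observation is that a complete bipartite graph $K_{a,b}$ forces one side to be constant whenever the other side uses both values $h\pm1$. If the $d_k$ grow fast enough, this pushes the typical configuration to be ``layer-constant on every other layer''. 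Then the effective height process along $k$ is close to a simple random walk with i.i.d.-like increments, conditioned to be zero at the boundary layer. Its variance at the root grows linearly in the depth, which is delocalization. On the other hand, the electrical resistance of $G$ to infinity is $\sum_k 1/(d_kd_{k+1})<\infty$ once $d_k$ grows, e.g.\ $d_k=k+1$, so $G$ is transient.

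The plan is therefore as follows. (1) Fix $d_k\to\infty$ with $\sum 1/(d_kd_{k+1})<\infty$ and verify transience via Nash--Williams or the series law. (2) Analyse a uniform homomorphism on the first $N$ layers with the last layer fixed to $0$. Integrate out every other layer: given constant values $h_k$ on layers $k$ and $k+2$, the middle layer $A_{k+1}$ has $2^{d_{k+1}}$ choices if $|h_k-h_{k+2}|=0$ and one choice if the difference is $\pm2$. This shows the even layers behave like a lazy random walk with holding weight $2^{d}$ against step weight $1$. Since this would make the walk almost constant, we would instead choose the geometry so that the step weights balance, e.g.\ alternating large and small layers. (3) Bound the probability that an odd layer is non-constant using the dominance of $2^{d}$ factors. (4) Conclude that $\phi(\text{root})$ has variance of order the number of layers, giving delocalization as $N\to\infty$.

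The main obstacle is step (2)–(3): choosing layer sizes so that the integrated-out walk has non-degenerate increments, which delocalizes, while resistance stays summable, which keeps the graph transient. The entropy factors $2^{d}$ tend to pin. I would first try alternating layer sizes $1,d_k,1,d_k,\dots$. Then the singleton layers carry the walk, and the big layers contribute symmetric weights $2^{d_k}$ independent of whether consecutive singleton heights differ by $0$ or $\pm2$, up to bounded corrections. This yields an essentially simple random walk. Transience would then be obtained by placing many such ladders in parallel, glued along the singleton spine.
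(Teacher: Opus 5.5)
Your proposal never reaches a working construction, and the candidate you settle on fails for the reason you already found in step (2).

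In the alternating graph with layer sizes $1,d_k,1,d_k,\dots$, every vertex of the big layer between the singletons $s_{k-1}$ and $s_k$ is adjacent to both. It therefore has two admissible values when $\phi(s_{k-1})=\phi(s_k)$, and only one when $\abs{\phi(s_{k-1})-\phi(s_k)}=2$. So the big layer contributes $2^{d_k}$ versus $1$, not ``symmetric weights up to bounded corrections''. This is exactly the lazy walk of step (2) with the big layers integrated out.

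Concretely, fix $\phi(s_N)=0$. Then $\phi(s_0)=-\sum_{k=1}^N X_k$ with independent increments $X_k\in\{0,\pm2\}$ and $\mathbb{P}(X_k=\pm2)=1/(2^{d_k}+2)$. Hence the variance of the root height is at most $8\sum_k 2^{-d_k}\le 8\sum_k 1/d_k$. The resistance between consecutive singletons is $2/d_k$, so transience of this graph is equivalent to $\sum_k 1/d_k<\infty$. In this family, transience forces a uniformly bounded root variance, i.e.\ localization.

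Gluing many ladders along a common singleton spine only replaces $d_k$ by a larger number of parallel length-$2$ paths, so it changes nothing. Keeping $d_k$ bounded makes the graph recurrent. Your opening suggestion, a transient graph on which homomorphisms are already delocalized, is circular.

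The missing idea, which the paper uses, is to attach the entropic gadget to a \emph{single} level rather than to the link between consecutive levels, and to let a separate structure supply transience. The paper takes the binary tree with levels $A_n$, which makes the graph transient. It then adds a set $B_n$ of $2^{n+2}$ vertices, each joined to every vertex of $A_n$.

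A non-constant $A_n$ forces $B_n$ to be constant. This forfeits a factor $2^{\abs{B_n}}$ against at most $2^{\abs{A_0}+\dots+\abs{A_n}}<2^{2^{n+1}}$ extra choices. So with probability tending to one, all levels $A_n$ with $\lfloor\log N\rfloor\le n\le N$ are constant.

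Crucially, when $A_n\equiv a$ the gadget contributes $2^{\abs{B_n}}$ for \emph{every} value of $a$. It therefore does not penalize the steps $a_{n+1}-a_n=\pm1$. Conditionally on constancy, the level values form an exact simple random walk, which delocalizes.

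In your constructions the reward $2^{d_k}$ is collected only when the height does not move between levels. That is precisely what pins it.
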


\begin{proof}
We start with an infinite binary tree. Denote by $A_n$ the set of vertices at depth $n\geq0$, so that $\abs{A_n}=2^n$.
We extend the graph as follows: For every $n\geq0$, add $2^{n+2}$ vertices to the graph, and connect each of them to each of the vertices in $A_n$. We denote this new set of vertices by $B_n$.

Informally, the presence of $B_n$ makes it highly likely that vertices in $A_n$ receive the same value under a uniformly random homomorphism with an appropriate boundary condition.
Indeed, since there is a complete bipartite subgraph connecting $A_n$ and $B_n$, any homomorphism must either be constant on $A_n$ or on $B_n$ for each $n\geq 0$. Since $\abs{B_n}$ was chosen sufficiently large, this causes the number of homomorphisms that are constant on $A_n$ to be a significant majority.
Conditioned on the event that a random homomorphism takes constant values on each of $A_0,A_1,A_2,A_3,...$, these values are distributed as a simple random walk, whose behavior is delocalized.
We proceed to formalize this idea.

\begin{claim} \label{claim:ctrex_layer_const_step}
Let $N>0$, and denote $\Lambda=\bigcup_{k=0}^{N} \pr{A_k \cup B_k}$. Let $\phi: \Lambda \to \ZZ$ be a uniformly sampled homomorphism satisfying the boundary condition $\phi|_{A_N} \equiv 0$. Then for all $1\leq n\leq N-1$, we have
\begin{equation*}
\prob{\phi\textrm{ is constant on }A_n\mid \phi\textrm{ is constant on }A_{n+1}}
\geq 1-2^{-2^n} .
\end{equation*}
\end{claim}
\begin{proof}
It suffices to prove that this holds when also conditioning on the heights of all vertices in $A_{n+1}$ and below.
In other words, showing that for every homomorphism $\psi:\bigcup_{k=n+1}^N \pr{A_k \cup B_k}\to\ZZ$ which is constant on $A_{n+1}$, we have
\begin{equation*}
\prob{\phi\textrm{ is constant on }A_n
\mid \phi\equiv \psi\textrm{ on }\bigcup_{k=n+1}^N \pr{A_k \cup B_k}}
\geq 1-2^{-2^n} .
\end{equation*}
Among homomorphisms $\phi$ that agree with $\psi$, we say that those that are also constant on $A_n$ are good, and the rest are bad.
Observe that there are at least $2^{\sum_{k=0}^{n}{\abs{B_k}}}$ good homomorphisms:
Indeed, if $\psi\mid_{A_{n+1}}\equiv a$, we may for example set $\psi\mid_{A_{n+1-k}}\equiv a+k$ and freely select heights $a+k\pm 1$ for each vertex in $B_{n+1-k}$, for every $1\leq k\leq n+1$.

On the other hand, there are at most $2^{\sum_{k=0}^n{\abs{A_k}}+\sum_{k=0}^{n-1}{\abs{B_k}}}$ bad homomorphisms:
Indeed, we may sequentially choose the height of every vertex in $A_n$, then $B_n$, $A_{n-1}, B_{n-1}$ and so forth.
At each step there are at most two valid options, while for vertices in $B_{n}$ there is only one valid option if heights are nonconstant on $A_n$.
In total, we get
\begin{multline*}
\prob{\phi\textrm{ is not constant on }A_n
\mid \phi\equiv \psi\textrm{ on }\bigcup_{k=n+1}^N \pr{A_k \cup B_k}} \leq \\
\leq \frac{2^{\sum_{k=0}^n{\abs{A_k}}+\sum_{k=0}^{n-1}{\abs{B_k}}}}{2^{\sum_{k=0}^{n}{\abs{B_k}}}}
= 2^{-\abs{B_n}+2^{n+1}-1} < 2^{-2^n}. \qedhere
\end{multline*}
\end{proof}
Fix $N>0$, and let $\phi: \Lambda \to \ZZ$ be sampled as in \Cref{claim:ctrex_layer_const_step}. Also denote $m=\floor{\log N}$. Let $E$ be the event that $\phi$ is constant on $A_n$ for all $m\leq n\leq N$. Note that $\phi$ is always constant on the boundary $A_N$. Applying the claim multiple times, we get as $N\to\infty$ that
\begin{equation*}
\prob{E}
\geq \prod_{n=m}^{N-1} \pr{1-2^{-2^n}}
\geq \prod_{n=m}^{\infty} \pr{1-2^{-2^n}}
\to 1.
\end{equation*}
Conditioned on $E$, the heights
\begin{equation*}
\pr{a_N, a_{N-1},...,a_m}
\coloneqq \pr{\phi|_{A_N}, \phi|_{A_{N-1}}, ...,\phi|_{A_m}}
\end{equation*}
follow the distribution of a simple random walk starting at zero.
Equivalently, the number of homomorphisms that generate any particular sequence $\pr{a_N, a_{N-1},...,a_m}$ is the same for any such valid sequence.
Indeed, given $\pr{a'_k}_{k=m}^N$ and $\pr{a''_k}_{k=m}^N$, there is a simple bijection between the sets of homomorphisms that produce them: Namely, adding $a''_{\max(k,m)}-a'_{\max(k,m)}$ to all heights in $A_k\cup B_k$ for all $0\leq k\leq N$.

In particular, the height at depth $m$ is conditionally distributed as $2 \cdot\mathrm{Bin}(N-m,1/2)-(N-m)$, which implies that as $N\to\infty$,
\begin{equation*}
\prob{\abs{\phi|_{A_m}} \geq \frac{1}{m}\sqrt{N-m} \mid E} \to 1.
\end{equation*}
Since $\phi|_{A_m}$ differs from the height at the root by at most $m$, we get
\begin{equation*}
\prob{\abs{\phi\pr{\mathrm{root}}} \geq \frac{1}{m}\sqrt{N-m} - m \mid E} \to 1.
\end{equation*}
Because $\prob{E}\to 1$, we in fact have
\begin{equation*}
\prob{\abs{\phi\pr{\mathrm{root}}} \geq \frac{1}{m}\sqrt{N-m} - m} \to 1.
\end{equation*}
Finally, since $\frac{1}{m}\sqrt{N-m} - m=\omega(1)$ we deduce that homomorphisms are delocalized on this graph. However, this graph is transient as it contains a transient subgraph, namely the infinite binary tree.

\end{proof}

\begin{remark} \label{remark:lip_ctrex}
By sufficiently increasing $\abs{B_n}$, one may obtain a (transient) graph on which random integer-valued $M$-Lipschitz functions are delocalized: Indeed, if an $M$-Lipschitz function is constant over $A_n$ then there are $\pr{2M+1}^{\abs{B_n}}$ possible combinations for the heights of $B_n$, and in any other case there are at most $\pr{2M}^{\abs{B_n}}$ such combinations. If $\abs{B_n}$ is chosen sufficiently large, the discrepancy between the two quantities may be utilized as in the proof above to establish delocalization of Lipschitz functions on this graph.
\end{remark}